\newcommand{\RED}{
}
\newcommand{\BLUE}{
}
\definecolor{dGREEN}{rgb}{0.0,0.5,0.5}
\numberwithin{equation}{section}
\newtheorem{thm}{Theorem}[section]
\newtheorem{lemma}[thm]{Lemma}
\newtheorem{prop}[thm]{Proposition}
\newtheorem{definition}[thm]{Definition}
\newtheorem{remark}[thm]{Remark}
\begin{document}
\title{A few topics on total variation flows}
\author{Yoshikazu Giga
, Hirotoshi Kuroda, 
 Micha{\l} {\L}asica
}
\date{}
\maketitle
\thispagestyle{empty}

\begin{abstract}\noindent
{\BLUE Total variation gradient flows are important in several applied fields, including image analysis and materials science. 
 In this paper, we review a few basic topics including definition of a solution, explicit examples and the notion of calibrability, finite time extinction, and some regularity properties of solutions. 
 We focus on the second-order flow (possibly with weights) and the fourth-order flow. We also discuss the fractional cases. 
 }

\end{abstract}



\section{Introduction} \label{S1} 

The (essential) total variation of a function $u$ defined in a domain $\Omega$ in $\mathbb{R}^n$ is formally of the form
\[
	TV(u) = \int_\Omega |\nabla u|\; dx.
\]
Although it is a special case of $p$-Dirichlet energy
\[
	E_p(u) = \frac1p \int_\Omega |\nabla u|^p\; dx
\]
with $p=1$, there are several different features compared with $1<p<\infty$.
 This is easily observed when one considers its variation under $L^2$-metric{\BLUE, i.e., the }
 $L^2$-gradient.
 If the energy is given as
\[
	\mathcal{E}(u) = \int_\Omega f(\nabla u)\; dx,
\]
its $L^2$-gradient $\operatorname{grad}_{L^2} \mathcal{E}$ (the Euler--Lagrange operator) formally satisfies
\[
	\frac{d}{d\varepsilon} \left. \int_\Omega f\left(\nabla(u+\varepsilon\varphi)\right) dx\right|_{\varepsilon=0}
	= \int_\Omega \left(\operatorname{grad}_{L^2} \mathcal{E}(u)\right)\varphi\; dx \quad \text{for all smooth functions $\varphi$}.
\]
Neglecting the effect of the boundary, we see, by integration by parts,
\[
	\operatorname{grad}_{L^2} \mathcal{E}(u) = -\operatorname{div} \left((\nabla_q f)(\nabla u) \right)
\]
since
\begin{align*}
	\frac{d}{d\varepsilon} \left. \int_\Omega f\left(\nabla(u+\varepsilon\varphi)\right) dx\right|_{\varepsilon=0}
	&= \sum_{i=1}^n \int_\Omega \frac{\partial f}{\partial q_i}(\nabla u) \frac{\partial\varphi}{\partial x_i} (x)\; dx \\
	&= -\int_\Omega \operatorname{div} \left((\nabla_q f)(\nabla u) \right) \varphi\; dx
\end{align*}
where $\nabla_qf=(\partial f/\partial q_1,\ldots,\partial f/\partial q_n)$.
 For the $p$-Dirichlet energy, i.e., $f(q)=|q|^p/p$
\[
	\operatorname{grad}_{L^2} E_p(u) = -\operatorname{div} \left(|\nabla u|^{p-2}\nabla u \right)
\]
so that
\[
	\operatorname{grad}_{L^2} TV(u) = -\operatorname{div} \left(\nabla u/|\nabla u| \right).
\]
These operators are (degenerate) elliptic but their features depend on $p$.
 We write
\[
	\operatorname{div} \left(|\nabla u|^{p-2}\nabla u \right)
	= \sum_{1\leq i,j\leq n} a_{ij} (\nabla u) \frac{\partial^2}{\partial x_i \partial x_j} u.
\]
Then $a_{ij}(q)\to0$ as $q\to0$ if $p>2$ and all eigenvalues of $\left(a_{ij}(q)\right)$ tend to infinity as $q \to 0$ for $1<p<2$.
 In other words, $\operatorname{grad}_{L^2}E_p$ is degenerate elliptic when $p>2$ and singular for $1<p<2$.
 (In the case $p=2$, it is the Laplacian.)
 Nevertheless, the operator is elliptic in the region where $q \neq 0$.
 In the case $p=1$,
\[
	a_{ij}(q) = \frac{1}{|q|} \left( \delta_{ij}-q_i q_j/|q|^2 \right).
\]
The metric $A=\left(a_{ij}(q)\right)$ is degenerate in direction $q$ for all $q\in\mathbb{R}^n$, i.e.,
\[
	\sum_{j=1}^n a_{ij} (q) q_j = 0
	\quad\text{or}\quad Aq = 0
\]
because $|q|A$ is the orthogonal projection onto the hyperplane orthogonal to $q$.
 However, in the direction orthogonal to $q$, $A$ is singular {\BLUE at $q=0$}.
 Differently from the case $p>1$, $\operatorname{grad}_{L^2} TV$ has both singular and degenerate effects.
 From the point of energy density, $TV$ is significantly different from $E_p$ for $p>1$.
 First, the energy density $f(q)=|q|^p/p$ is non-differentiable at $q=0$ for $p=1$ while for $p>1$ it is differentiable.
 This says that the singularity of $A$ at $q=0$ is very strong compared with the case $p>1$.
 In fact, due to this singularity, the operator has nonlocal nature, as we see later.
 Second, the growth of $f(q)$ as $|q|\to\infty$ is just linear while $f(q)/|q|\to\infty$ for $p>1$.
 This means that minimizers of $TV(u)$ under suitable supplemental condition may have jump discontinuities along some hypersurface, which does not occur for $p>1$.
 Thus, problems involving $TV$ should be handled separately from problems involving $E_p$ for $p>1$.

In this paper, we intend to give a theoretical introduction to total variation flows, that is, gradient flows of $TV$.
 Typically we consider the second-order problem
\begin{equation} \label{E1TV}
	u_t + \operatorname{grad}_{L^2} TV(u) = 0
	\quad\text{\BLUE i.e.
 }\quad u_t - \operatorname{div}\left(\nabla u/|\nabla u|\right) = 0
\end{equation}
or the fourth-order problem
\begin{equation} \label{E1TV4}
	u_t + \Delta\operatorname{div}\left(\nabla u/|\nabla u|\right) = 0
\end{equation}
which is regarded as an $H^{-1}$-gradient flow, where $H^s$ denotes the Sobolev space of order $s\in\mathbb{R}$.
 We also consider the fractional case
\begin{equation} \label{E1TVS}
	u_t -(-\Delta)^s \operatorname{div}\left(\nabla u/|\nabla u|\right) = 0.
\end{equation}
As alluded before, because of the singularity at $q=0$ for the energy density $f(q)=|q|$, $\operatorname{div}\left(\nabla u/|\nabla u|\right)$ has nonlocal nature in the sense that its value at $x$ cannot be determined by the value of $u$ near $x$.
 Thus, the definition of a solution itself is nontrivial.
 Since $TV$ is still convex and lower semicontinuous in many function spaces contained in the space of Schwartz distributions, we are able to apply an abstract theory based on maximal monotone operators stated by Y.\ K\=omura and developed by H.\ Brezis and others to get a solution.
 The equation can be interpreted as a gradient flow in a Hilbert space $H$
\[
	u_t \in -\partial_H \mathcal{E}(u),
\]
where $\partial_H$ denotes the subdifferential of $\mathcal{E}$ (with respect to the metric in $H$), which is an extension of the notion of derivative.

Our first goal is to give a precise definition of a solution to total variation flows including \eqref{E1TV}, \eqref{E1TV4}, \eqref{E1TVS}.
 For the case \eqref{E1TV}, we note our formulations allow a weight, i.e., we also consider
\begin{equation} \label{E1TVW}
	bu_t = \operatorname{div}\left(a\nabla u/|\nabla u|\right).
\end{equation}
The weight is sometimes important, especially when we consider the Kobayashi--Warren--Carter system \cite{KWC} in materials science.
 As a singular limit, one has to consider even a discontinuous weight \cite{GOU}, \cite{GOSU}. 
 For \eqref{E1TV4} the interpretation as a gradient flow is sometimes non-trivial.
 In the case \eqref{E1TV4}, as discussed in \cite{GKL} the definition of a solution is quite involved for $\Omega=\mathbb{R}^n$ for $n\leq2$ because interpretation as $u_t\in-\partial_H\mathcal{E}(u)$ is not clear.
 We then calculate the subdifferential to understand the flow in a more explicit way.
 This is essential for the construction of explicit solutions.
 When we consider a characteristic function of a set as an initial datum, we wonder whether the speed $u_t$ is constant on the set and outside of the set.
 This leads to the notion of calibrability of a set.

Our second goal is to explain the notion of calibrability with special emphasis on the fourth-order problem.
 We also note that in the fourth-order problem \eqref{E1TV4}, the support of the characteristic function may move, which does not occur for the second order problem \eqref{E1TV}.
 This is because the speed may contain a delta part.
 The existence of the delta part has already been observed in \cite{Ka1} where the author discussed $TV$ perturbed by $E_p$ with some $p>1$.
 See also \cite{GG}.
 Based on calibrability, we give a few examples of explicit solutions when initial datum is a characteristic function.
 For the fourth-order problem the material is taken from \cite{GKL}, while for the second-order problem it is mostly taken from \cite{ACM}.
 As a property of a solution, we further discuss extinction time estimates as studied in \cite{GK}, \cite{GKM}.
 We discuss it for \eqref{E1TVS} which is new.
 We also present another formal argument to estimate the extinction time, which will be rigorously discussed in our forthcoming paper \cite{GKL2}.

In the last part of this paper, we review a couple of regularity results for the second-order problem. {\BLUE The main result is that the set of jump discontinuities of $u(t)$ is contained in that of $u_0$. Moreover, jump sizes are non-increasing in time. This has been first proved in \cite{CCN,CJN}. Their proof is based on an analogous result for a time-discretized problem in \cite{CCN}, which relies on the connection between the total variation and surfaces of prescribed mean curvature, given by the co-area formula, and regularity theory for those surfaces. Here, we use instead a conceptually simpler technique from \cite{CL} which does not rely on such subtle properties of $TV$. We also show a stronger estimate in the 1D case, appearing in \cite{BF, BCNO}, which implies non-expansion of jumps as well as pointwise estimate $|\nabla u(t)| \leq |\nabla u_0| $ (note that the latter does not hold in higher dimensions, see e.g.\ \cite{ACC}).} The situation is significantly different in the fourth-order case, where new jumps may be created as proved in \cite{GG}.  This behavior is also expected for \eqref{E1TVS} for $0<s<1$ as numerically suggested in \cite{GMR}.

Total variation type flows find applications in many fields including image analysis \cite{ROF}, \cite{ACM} materials science \cite{KWC} and crystal growth problems.
 For the latter two topics, see recent survey \cite{GP},  where a general second-order crystalline mean curvature flow is discussed.
 The research on fourth-order problems is less popular. 
 There is a review paper \cite{GG} which includes the development before 2010.
 {\BLUE The field is growing and we do not touch many important topics such as total variation flow of maps between Riemannian manifolds, see e.g.\ \cite{GMM, GLM, GSTU}, flows on metric spaces \cite{MST, BCP, GM}, the Wasserstein $TV$ flow \cite{BCDS, CP} or $TV$ flow with time-dependent boundary conditions \cite{BDS, GNRS}. }

 This paper is organized as follows.
 In Section \ref{S2}, we recall definitions of a solution both to the second-order and the fourth-order total variation flow.
 We also review basic unique existence results and recall the notion of a Cahn-Hoffman vector field.
 In Section \ref{S3}, we discuss the notion of calibrability with special emphasis on the fourth-order problem.
 In Section \ref{S4}, we mention several explicit solutions, mainly radially symmetric piecewise constant solutions.
 In Section \ref{S5}, we discuss several upper bounds for the extinction time.
 In Section \ref{S6}, we discuss regularity properties of the flow.

\section{Definition of a solution} \label{S2} 
\subsection{Total variation flows}\label{S2S1}

We begin with a standard definition of total variation \cite{Giu} for a locally integrable function defined in $\Omega$, where $\Omega$ is either a domain in $\mathbb{R}^n$ or flat torus $\mathbb{T}^n=\prod_{i=1}^n(\mathbb{R}/\omega_i\mathbb{Z})$ with $\omega_i>0$ for $i=1,\ldots,n$.
 For an integrable function $u$, we define the total variation of $u$ in $\Omega$ by
\[
	TV(u) := \sup \left\{ -\int_\Omega u \operatorname{div}\varphi\; dx \biggm|
	\left|\varphi(x)\right| \leq 1\ \text{for all}\ x\in\Omega,\ 
	\varphi\in \left(C_c^\infty(\Omega)\right)^n \right\},
\]
where $C_c^\infty(\Omega)$ denotes the space of all smooth functions compactly supported in $\Omega$.
 {\BLUE If $u$ belongs to the Sobolev space $W^{1,1}(\Omega)$ (that is, if its distributional derivative is an integrable function), then $TV(u) = \int_\Omega|\nabla u| dx < \infty$. More generally, $TV(u)$ is finite if and only if the distributional derivative $Du$ is a finite vector measure \cite{AFP}. In this case we say that $u$ is a \emph{function of bounded variation}, $u \in BV(\Omega)$, and we can write $TV(u) = \int_\Omega |Du|$. }
 
 For later convenience, we also define weighted total variation.
 Let $a:\Omega\to[0,\infty]$ be a lower semicontinuous function.
 We set
\[
 TV_a(u) := \sup \left\{ -\int_\Omega u \operatorname{div}\varphi\; dx \biggm|
	\left|\varphi(x)\right| \leq a(x)\ \text{for all}\ x\in\Omega,\ 
	\varphi\in \left(C_c^\infty(\Omega)\right)^n \right\}.
\]
This definition can be easily extended to the case when $u$ is a Schwartz distribution, i.e., $u\in\mathcal{D}'(\Omega)$, by replacing $-\int_\Omega u \operatorname{div}\varphi\; dx$ by a canonical pair $-\langle u,\operatorname{div}\varphi\rangle$.
 Since $u_m\to u$ in $\mathcal{D}'(\Omega)$ implies $\langle u_m,\operatorname{div}\varphi\rangle\to\langle u,\operatorname{div}\varphi\rangle$, $TV_a(u)$ is a supremum of (sequentially) continuous function in $\mathcal{D}'(\Omega)$.
 Thus, $TV_a(u)$ is lower (sequentially) semicontinuous on $\mathcal{D}'(\Omega)$.
 In particular, $TV_a(u)$ is lower semicontinuous on $L^p(\Omega)$, $p\geq1$.
 Since $TV_a(u)$ is a supremum of linear functionals $u\mapsto-\langle u,\operatorname{div}\varphi\rangle$ it must be convex on $\mathcal{D}'(\Omega)$.
 Note that our definition on $TV_a(u)$ is different from the one given by \cite{AB} where the condition $\left|\varphi(x)\right|\leq a(x)$ is only imposed for almost all $x\in\Omega$ when $a$ is discontinuous.
 In their definition, $TV_a=TV$ if $a\equiv1$ almost everywhere, while in our definition it may happen that
\[
 	TV_a(u) < TV(u)
\]
for some $u$ even if $a=1$ a.e.
 This is already observed  in \cite{AB}.
 Indeed, let us consider $\Omega=(-1,1)$ and $a(x)=1$ for $x\neq0$, $a(0)=a_0<1$ ($a_0\geq0$).
 Then, we see that
\[
	TV_a(u) = \int_{\Omega_-} \left|\frac{du}{dx}\right| + \int_{\Omega_+} \left|\frac{du}{dx}\right|
	+ a_0 \left| u(+0) - u(-0) \right| 
\]
with $\Omega_-=(-1,0)$, $\Omega_+=(0,1)$, where $u(\pm0)=\lim_{\delta\downarrow0}u(\pm\delta)$.
 Such a type of discontinuous lower semicontinuous function $a$ will be important when we study a singular limit of the Kobayashi--Warren--Carter energy as discussed in \cite{GOU} and \cite{GOSU}.
 Although the total variation is defined under Finsler type metric \cite{AB} which is important to study crystalline curvature \cite{GP}, we do not touch this problem in this paper.

We recall a classical theory for the gradient flow of a convex functional in a Hilbert space due to Y.\ K\=omura \cite{Ko} and H.\ Brezis \cite{Br}.
\begin{prop} \label{PAb} 
Let $H$ be a (real) Hilbert space.
 Let $\mathcal{E}$ be a lower semicontinuous functional on $H$ with values in $(-\infty,\infty]$ and $\mathcal{E}\not\equiv\infty$.
 Then for any $u_0\in \overline{D(\mathcal{E})}$, there exists a unique $u\in C\left([0,\infty),H\right)$ with $u_t\in\bigcap_{\delta>0}L^2\left((\delta,\infty),H\right)$ such that
\begin{equation} \label{EAb}
	u_t(t)\in -\partial_H \mathcal{E}\left(u(t)\right) \quad\text{for a.e. }t>0, \quad
	u(0) = u_0.
\end{equation}
If $u_0\in D(\mathcal{E})$, then $\delta=0$ is allowed.
\end{prop}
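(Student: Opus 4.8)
The plan is to recognise \eqref{EAb} as the gradient flow generated by a maximal monotone operator and to invoke the nonlinear semigroup theory of K\=omura and Brezis; here $\partial_H\mathcal{E}$ denotes the (possibly multivalued) subdifferential of the convex functional $\mathcal{E}$, with effective domain $D(\partial_H\mathcal{E})=\{u\in H:\partial_H\mathcal{E}(u)\neq\emptyset\}\subseteq D(\mathcal{E})$, and $'$ denotes $d/dt$. The first step is to check that $A:=\partial_H\mathcal{E}$ is maximal monotone. Monotonicity follows in one line from convexity: adding the subgradient inequalities $\mathcal{E}(v)\geq\mathcal{E}(u)+\langle\xi,v-u\rangle$ for $\xi\in Au$ and $\mathcal{E}(u)\geq\mathcal{E}(v)+\langle\eta,u-v\rangle$ for $\eta\in Av$ yields $\langle\xi-\eta,u-v\rangle\geq0$. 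For maximality it suffices (Minty's criterion) to show that $I+A$ is onto: given $f\in H$, the functional $v\mapsto\tfrac12\|v\|^2-\langle f,v\rangle+\mathcal{E}(v)$ is proper, strictly convex, lower semicontinuous and coercive (using that a proper convex lower semicontinuous functional is bounded below by an affine one), hence attains its infimum at a unique $u$; the first-order optimality condition for this minimiser reads $f-u\in\partial_H\mathcal{E}(u)$, i.e.\ $f\in(I+A)u$. Equivalently, for every $\lambda>0$ the resolvent $J_\lambda:=(I+\lambda A)^{-1}$ is a single-valued contraction defined on all of $H$.

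Next I would introduce the Yosida approximation $A_\lambda:=\lambda^{-1}(I-J_\lambda)$, which is monotone and $\lambda^{-1}$-Lipschitz, and solve the regularised Cauchy problem $u_\lambda'=-A_\lambda(u_\lambda)$, $u_\lambda(0)=u_0$, which has a unique global solution by the Picard--Lindel\"of theorem. The a priori estimates come from the Moreau--Yosida regularisation $\mathcal{E}_\lambda(w):=\min_{v\in H}\{\tfrac1{2\lambda}\|w-v\|^2+\mathcal{E}(v)\}$, which is of class $C^{1,1}$ with $\nabla\mathcal{E}_\lambda=A_\lambda$ and satisfies $\mathcal{E}_\lambda(w)\uparrow\mathcal{E}(w)$ as $\lambda\downarrow0$ for every $w$. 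Differentiating along the flow gives the energy identity $\int_0^T\|u_\lambda'\|^2\dt+\mathcal{E}_\lambda(u_\lambda(T))=\mathcal{E}_\lambda(u_0)$; differentiating once more and using that $A_\lambda$ is the gradient of the convex $\mathcal{E}_\lambda$ shows that $t\mapsto\|A_\lambda u_\lambda(t)\|$ is nonincreasing, which by an elementary integration gives the regularising bound $t\,\|u_\lambda'(t)\|^2\leq\mathcal{E}_\lambda(u_0)-\mathcal{E}_\lambda(u_\lambda(t))$. When $u_0\in D(\mathcal{E})$ one has $\mathcal{E}_\lambda(u_0)\leq\mathcal{E}(u_0)<\infty$ uniformly in $\lambda$, so all these bounds are uniform; for a general $u_0\in\overline{D(\mathcal{E})}$ one first proves the statement for initial data in $D(\mathcal{E})$ and then extends by density, using the contraction estimate below together with time shifts to recover the $L^2_{\mathrm{loc}}$ bound on $u_t$, which is exactly the content of $u_t\in\bigcap_{\delta>0}L^2((\delta,\infty),H)$.

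To let $\lambda\downarrow0$ I would compute, for $\lambda,\mu>0$, $\tfrac12\frac{d}{dt}\|u_\lambda-u_\mu\|^2=-\langle A_\lambda u_\lambda-A_\mu u_\mu,\,u_\lambda-u_\mu\rangle$ and bound the right-hand side by exploiting the monotonicity of $A$ together with the identity $J_\lambda w=w-\lambda A_\lambda w$ and the uniform bound on $\|A_\lambda u_\lambda\|$; Gr\"onwall's inequality then shows $\sup_{[0,T]}\|u_\lambda-u_\mu\|\to0$, so $u_\lambda\to u$ in $C([0,T];H)$. From the uniform $L^2$-bound one extracts $u_\lambda'\rightharpoonup u'$ weakly in $L^2(0,T;H)$; since $-u_\lambda'=A_\lambda u_\lambda\in A(J_\lambda u_\lambda)$ and $J_\lambda u_\lambda\to u$, the demiclosedness (strong--weak closedness of the graph) of the maximal monotone operator $A$ yields $-u'(t)\in A(u(t))=\partial_H\mathcal{E}(u(t))$ for a.e.\ $t>0$, while $u(0)=u_0$ by construction. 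Uniqueness (hence also the contraction estimate used above) is immediate and needs no approximation: if $u,\tilde u$ both solve \eqref{EAb}, then monotonicity gives $\tfrac12\frac{d}{dt}\|u-\tilde u\|^2=\langle u'-\tilde u',\,u-\tilde u\rangle=-\langle\xi-\tilde\xi,\,u-\tilde u\rangle\leq0$ with $\xi\in\partial_H\mathcal{E}(u)$, $\tilde\xi\in\partial_H\mathcal{E}(\tilde u)$, so $\|u(t)-\tilde u(t)\|\leq\|u(0)-\tilde u(0)\|$. I expect the main obstacle to be the maximality step, i.e.\ the surjectivity of $I+\partial_H\mathcal{E}$, together with the careful bookkeeping in the regularising estimate and in the limit passage; all of this is classical and carried out in full detail in Brezis \cite{Br}, so in this review only the structure of the argument need be indicated.
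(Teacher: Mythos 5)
Your proposal is correct and is essentially the classical K\=omura--Brezis argument: maximal monotonicity of $\partial_H\mathcal{E}$ via Minty, Yosida approximation $A_\lambda$ with the Moreau--Yosida regularization $\mathcal{E}_\lambda$ providing the a priori and smoothing estimates, a Cauchy estimate in $\lambda$, demiclosedness to identify the limit, and contraction for uniqueness. The paper itself does not prove the proposition (it defers to \cite{Ko}, \cite{Br}) but indicates a genuinely different construction, the \emph{minimizing movements} (implicit Euler) scheme: one iterates the minimization of $\mathcal{E}^\lambda_f(w)=\lambda\mathcal{E}(w)+\tfrac12\|w-f\|_H^2$ on a time grid of mesh $1/N$ and passes to the limit in the piecewise-constant interpolants. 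The two routes share the same elementary building block --- the unique minimizer of $\mathcal{E}^\lambda_f$ is exactly the resolvent $J_\lambda f=(I+\lambda\partial_H\mathcal{E})^{-1}f$, so your Minty surjectivity step is the paper's existence of a minimizer --- but they organize the approximation differently: you regularize the \emph{operator} and solve a globally Lipschitz ODE exactly, whereas the paper discretizes \emph{time} and solves the stationary problems exactly. Your route is the more operator-theoretic one and extends verbatim to maximal monotone operators that are not subdifferentials; the time-discrete route is the one the paper actually needs later, since the regularity results of Section \ref{S6} are proved for the discrete minimizers of \eqref{min_mov} and transferred to the flow via Crandall--Liggett convergence, and it is also the formulation that generalizes to metric-space gradient flows. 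One small point: convexity of $\mathcal{E}$ is not listed among the hypotheses of Proposition \ref{PAb} but is clearly intended (the surrounding text uses it, and the statement fails without it); your proof correctly invokes it at every stage, so you should record it as a standing assumption. Finally, note that the claim $u_t\in L^2\left((\delta,\infty),H\right)$ on the unbounded interval implicitly uses that $\mathcal{E}$ is bounded below along the trajectory (true for $\mathcal{E}=TV\geq 0$, which is the case of interest); your energy identity only yields $L^2\left((\delta,T),H\right)$ for finite $T$ in general.
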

%
The symbol $\partial_H E$ denotes the subdifferential of $\mathcal{E}$ in $H$, i.e.,
\[
	\partial_H\mathcal{E}(v) = \left\{ f \in H \bigm|
	\mathcal{E}(v+h) - \mathcal{E}(v) \geq (h,f)_H\ \text{for all}\ h\in H \right\}
\]
for $v\in D(\mathcal{E})=\left\{f\in H\bigm|\mathcal{E}(f)<\infty\right\}$, where $(\cdot,\cdot)_H$ denotes the inner product in $H$. {\BLUE
One way to construct the solution to \eqref{EAb} is the \emph{minimizing movements scheme}. For given $f \in H$, $\lambda >0$, consider the functional $\mathcal{E}^\lambda_f \colon H \to (- \infty, \infty]$ of $\mathcal{E}$ given by 
\begin{equation} \label{min_mov}
\mathcal{E}^\lambda_f(w) = \lambda \mathcal{E}(w) + \frac{1}{2} \|w - f\|_H^2 .
\end{equation} 
Like $\mathcal{E}$, $\mathcal{E}^\lambda_f$ is lower semicontinuous and satisfies $\mathcal{E}^\lambda_f \not \equiv \infty$. Moreover, it is coercive and strictly convex, so it has a unique minimizer. For given $N \in \mathbb{N}$, we inductively produce a sequence $(u^N_k)_{k \in \mathbb{N}}$ of elements of $H$ by setting $u^N_k$ to be the minimizer of $\mathcal{E}^{1/N}_{u^N_{k-1}}$, $k \in \mathbb{N}$ and $u^N_0 = u_0$. Then, defining $u^N \in L^\infty(0, \infty;H)$ by 
\begin{equation} \label{min_mov_approx} u^N(t) = u^N_k \quad \text{for } t \in [k/N, (k+1)/N),
\end{equation} 
one can show that $u^N$ converges locally uniformly on $[0, \infty)$ to a solution to \eqref{EAb}. The solution is unique by monotonicity of the subdifferential of a convex functional. 
}

For a given non-negative measurable function $b\in L^\infty(\Omega)$ with $1/b\in L^\infty(\Omega)$, we define an inner product on $L^2(\Omega)$ by
\[
	(u,v)_{L_b^2} := \int_\Omega b(x) u(x) v(x)\; dx.
\]
This is equivalent to the standard inner product on $L^2$, $(u,v)_{L^2}$, corresponding to $b\equiv1$.
 Let $L_b^2(\Omega)$ be the space $L^2(\Omega)$ equipped with the inner product $(\ ,\ )_{L_b^2}$.
 The equation $u_t\in-\partial_{L_b^2}TV_a(u)$ formally corresponds to
 \begin{align}
\begin{aligned} \label{ETV}
	&b\frac{\partial u}{\partial t} = \operatorname{div}\left(a\frac{\nabla u}{|\nabla u|}\right) \quad\text{in}\quad  \Omega\times[0,\infty), \\
	&\frac{\partial u}{\partial\nu} = 0 \quad\text{on}\quad \partial\Omega\times(0,\infty) 
	\quad\text{if there is a boundary}\quad \partial\Omega \quad\text{of}\quad \Omega.
\end{aligned} 
\end{align} 
This type of equation is discussed in \cite{GGK} in one-dimensional periodic case, i.e., $\Omega=\mathbb{T}$.
 Here $\partial u/\partial \nu$ is the normal derivative of $u$ on $\partial\Omega$.
\begin{definition} \label{DTV}
We say that $u\in C\left([0,\infty),L_b^2(\Omega)\right)$ with $u_t\in\bigcap_{\delta>0}L^2\left((\delta,\infty),L_b^2(\Omega)\right)$ is a solution of \eqref{ETV} with initial datum $u_0\in L_b^2(\Omega)$ if it satisfies \eqref{EAb} with $H=L_b^2(\Omega)$ and $\mathcal{E}=TV_a$.
\end{definition}
We note that $D(TV_a)$ is dense in $L_b^2(\Omega)$ since $TV_a(u)$ is finite on a space $C_c^\infty(\Omega)$, which is dense in $L_b^2(\Omega)$.
 Since $TV_a$ is lower semicontinuous in $\mathcal{D}'$, it is also lower semicontinuous in $L_b^2(\Omega)$.
 Since $TV$ is convex in $L_b^2(\Omega)$, Proposition \ref{PAb} yields
\begin{thm} \label{TTV}
Let $b\in L^\infty(\Omega)$ be nonnegative with $1/b\in L^\infty(\Omega)$.
 Let $a:\Omega\to[0,\infty]$ be a lower semicontinuous function.
 Then for any $u_0\in L_b^2(\Omega)$, there exists a unique solution $u$ to \eqref{ETV}.
\end{thm}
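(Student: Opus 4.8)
The plan is to read Theorem \ref{TTV} off the abstract Proposition \ref{PAb}, applied with the Hilbert space $H = L_b^2(\Omega)$ and the functional $\mathcal{E} = TV_a$. Thus the whole task reduces to checking that the hypotheses of that proposition hold and observing that the object it produces is exactly a solution in the sense of Definition \ref{DTV}.

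First I would record that $L_b^2(\Omega)$ is a real Hilbert space. The form $(u,v)_{L_b^2} = \int_\Omega b\,uv\,dx$ is bilinear and symmetric, and since $b \geq 0$ with $b, 1/b \in L^\infty(\Omega)$ one has
\[
\|1/b\|_{L^\infty}^{-1}\,\|u\|_{L^2(\Omega)}^2 \;\leq\; \|u\|_{L_b^2(\Omega)}^2 \;\leq\; \|b\|_{L^\infty}\,\|u\|_{L^2(\Omega)}^2 ,
\]
so $\|\cdot\|_{L_b^2}$ is a norm equivalent to $\|\cdot\|_{L^2}$; in particular it is positive definite and complete. The equivalence of norms is convenient because the two topologies then have the same convergent sequences and the same dense subsets, so any topological fact proved for $L^2(\Omega)$ transfers verbatim to $L_b^2(\Omega)$.

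Next I would verify the conditions on $\mathcal{E} = TV_a$. Its values lie in $[0,\infty] \subset (-\infty,\infty]$, since the defining supremum includes $\varphi \equiv 0$ and thus the value $0$; and $TV_a \not\equiv \infty$ because $TV_a$ is finite on $C_c^\infty(\Omega)$ (indeed $TV_a(u) \leq \int_\Omega a\,|\nabla u|\,dx$ there). It is convex, being a pointwise supremum of the affine functionals $u \mapsto -\langle u, \operatorname{div}\varphi\rangle$, as required for the subdifferential theory underlying Proposition \ref{PAb}. Lower semicontinuity on $L_b^2(\Omega)$ follows from lower semicontinuity on $\mathcal{D}'(\Omega)$: $TV_a$ is a supremum of maps sequentially continuous on $\mathcal{D}'(\Omega)$, hence sequentially lower semicontinuous there, and since norm convergence in $L_b^2(\Omega)$ (equivalently in $L^2(\Omega)$) implies convergence in $\mathcal{D}'(\Omega)$, $TV_a$ is sequentially lower semicontinuous on $L_b^2(\Omega)$, which, $L_b^2(\Omega)$ being metrizable, is ordinary lower semicontinuity. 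Finally $\overline{D(TV_a)} = L_b^2(\Omega)$, since $D(TV_a) \supseteq C_c^\infty(\Omega)$ and the latter is dense in $L^2(\Omega)$, hence in $L_b^2(\Omega)$.

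With all hypotheses checked, Proposition \ref{PAb} with $H = L_b^2(\Omega)$ and $\mathcal{E} = TV_a$ yields, for every $u_0 \in \overline{D(TV_a)} = L_b^2(\Omega)$, a unique $u \in C([0,\infty), L_b^2(\Omega))$ with $u_t \in \bigcap_{\delta>0} L^2((\delta,\infty), L_b^2(\Omega))$ satisfying \eqref{EAb} for this $H$ and $\mathcal{E}$ — which is precisely a solution of \eqref{ETV} by Definition \ref{DTV} — and uniqueness is part of the conclusion of Proposition \ref{PAb}, coming from monotonicity of the subdifferential $\partial_{L_b^2} TV_a$. I expect the only steps requiring genuine care to be the lower semicontinuity of $TV_a$ on $L_b^2(\Omega)$ and the density of its domain, i.e.\ the bookkeeping for the weights $a$ and $b$; once these structural facts are in place, the theorem is a formal translation into the framework of the abstract theory.
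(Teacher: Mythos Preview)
Your proposal is correct and matches the paper's own argument essentially verbatim: the paper notes that $D(TV_a)$ is dense in $L_b^2(\Omega)$ (since $TV_a$ is finite on $C_c^\infty(\Omega)$, which is dense), that $TV_a$ is convex and lower semicontinuous on $L_b^2(\Omega)$ (inherited from $\mathcal{D}'$), and then invokes Proposition~\ref{PAb}. You have simply supplied more of the routine verifications (equivalence of the $L^2$ and $L_b^2$ norms, nonnegativity and $\not\equiv\infty$ of $TV_a$) than the paper spells out.
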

We are interested in considering a higher order total variation flow of the form
\[
	\frac{\partial u}{\partial t} = (-\Delta) \operatorname{div}\left(a\frac{\nabla u}{|\nabla u|}\right).
\]
We first consider the case $\Omega=\mathbb{T}^n$.
 For simplicity, we set $\omega_i=1$ ($1\leq i\leq n$).
 In this case, the homogeneous Sobolev space can be defined by imposing the average free condition, using Fourier series.
 For $s\in\mathbb{R}$, we set
\[
	\dot{H}_\mathrm{av}^s(\mathbb{T}^n)
	= \left\{ u = \sum_{\substack{m\in\mathbb{Z}^n\\m\neq0}} a_m e^{2\pi ix\cdot m} \in \mathcal{D}' \Biggm|
	\|u\|_{\dot{H}_\mathrm{av}^s}^2 := \sum_{\substack{m\in\mathbb{Z}^n\\m\neq0}} |m|^{2s} |a_m|^2 < \infty \right\}.
\]
This space is a (complex) Hilbert space with inner product
\[
	(( u,v ))_s := \sum_{m\neq0} a_m \overline{b}_m |m|^{2s}, \quad
	u = \sum_{m\neq0} a_m e^{2\pi ix\cdot m}, \quad
	v = \sum_{m\neq0} b_m e^{2\pi ix\cdot m}.
\]
It is an average free space.
 We consider the space of all real-valued functions in $\dot{H}_\mathrm{av}^s$ which is still denoted by $\dot{H}_\mathrm{av}^s$.
 The total variation $TV_a$ is well-defined on any $\dot{H}_\mathrm{av}^s$ since $\dot{H}_\mathrm{av}^s$ can be viewed as a subspace of $\mathcal{D}'(\mathbb{T}^n)$.
 Since $TV_a$ is convex and lower semicontinuous, Proposition \ref{PAb} yields
\begin{thm} \label{TPV}
Let $a:\mathbb{T}^n\to[0,\infty]$ be a lower semicontinuous function.
 Let $s\in\mathbb{R}$.
 For any $u_0\in\dot{H}_\mathrm{av}^{-s}(\mathbb{T}^n)$, there is a unique $u\in C\left([0,\infty), \dot{H}_\mathrm{av}^{-s}(\mathbb{T}^n)\right)$ with $u_t\in\bigcap_{\delta>0} L^2\left((\delta,\infty), \dot{H}_\mathrm{av}^{-s}(\mathbb{T}^n)\right)$ satisfying
\[
	u_t \in - \partial_{\dot{H}_\mathrm{av}^{-s}} TV_a(u), \quad
	\text{a.e.}\quad t > 0, \quad
	u(0) = u_0.
\]
We simply say $u$ is a solution of
\begin{equation} \label{ETVP}
	u_t = (-\Delta)^s \operatorname{div}\left(a\nabla u/|\nabla u|\right)
\end{equation}
with initial data $u_0$, where $\Delta$ denotes the Laplacian.
\end{thm}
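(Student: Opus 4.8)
The plan is to obtain Theorem~\ref{TPV} as a direct instance of Proposition~\ref{PAb}, applied with the real Hilbert space $H = \dot{H}_\mathrm{av}^{-s}(\mathbb{T}^n)$ and the functional $\mathcal{E} = TV_a$ restricted to $H$. Several ingredients are already in place: $\dot{H}_\mathrm{av}^{-s}(\mathbb{T}^n)$ is a real Hilbert space; $TV_a$ is convex on $\mathcal{D}'(\mathbb{T}^n)$, hence on any linear subspace; $TV_a$ takes values in $[0,\infty] \subset (-\infty,\infty]$, being a supremum of linear functionals which vanishes at $\varphi = 0$; and $TV_a \not\equiv \infty$ since $TV_a(0) = 0$. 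So the only two points that genuinely need to be checked are (a) lower semicontinuity of $TV_a$ with respect to the \emph{norm} topology of $\dot{H}_\mathrm{av}^{-s}$, and (b) density of $D(TV_a) = \{u \in \dot{H}_\mathrm{av}^{-s} : TV_a(u) < \infty\}$ in $\dot{H}_\mathrm{av}^{-s}$, which is what upgrades the conclusion of Proposition~\ref{PAb} from $u_0 \in \overline{D(TV_a)}$ to every $u_0 \in \dot{H}_\mathrm{av}^{-s}$ as claimed.

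For (a), I would first record that the inclusion $\dot{H}_\mathrm{av}^{-s}(\mathbb{T}^n) \hookrightarrow \mathcal{D}'(\mathbb{T}^n)$ is sequentially continuous. Given $\psi \in C^\infty(\mathbb{T}^n)$, write $\psi = \bar\psi + \psi_0$ as the sum of its mean and its mean-free part; since every element of $\dot{H}_\mathrm{av}^{-s}$ annihilates constants, $\langle u, \psi\rangle = \langle u, \psi_0\rangle$, and the Cauchy--Schwarz inequality on Fourier coefficients (pairing $|m|^{-s}$ against $|m|^{s}$) gives $|\langle u, \psi_0\rangle| \le \|u\|_{\dot{H}_\mathrm{av}^{-s}}\,\|\psi_0\|_{\dot{H}_\mathrm{av}^{s}}$ with $\|\psi_0\|_{\dot{H}_\mathrm{av}^{s}} < \infty$ by smoothness of $\psi$. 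Hence $u_m \to u$ in $\dot{H}_\mathrm{av}^{-s}$ forces $u_m \to u$ in $\mathcal{D}'(\mathbb{T}^n)$, and the lower semicontinuity of $TV_a$ on $\mathcal{D}'(\mathbb{T}^n)$ already established in the excerpt transfers to $\dot{H}_\mathrm{av}^{-s}$. Equivalently, each affine map $u \mapsto -\langle u, \operatorname{div}\varphi\rangle$ is norm-continuous on $\dot{H}_\mathrm{av}^{-s}$ (note $\operatorname{div}\varphi$ has zero mean, so the pairing is well defined there), so $TV_a$ is a supremum of continuous functions.

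For (b), the trigonometric polynomials with vanishing mean are dense in $\dot{H}_\mathrm{av}^{-s}(\mathbb{T}^n)$ by the very definition of the space: truncating the expansion of $u = \sum_{m\neq0} a_m e^{2\pi i x\cdot m}$ at $|m|\le M$ produces an error of norm $\big(\sum_{|m|>M}|m|^{-2s}|a_m|^2\big)^{1/2}\to 0$. Each such polynomial $p$ lies in $D(TV_a)$, because $-\int_{\mathbb{T}^n} p\operatorname{div}\varphi\,dx = \int_{\mathbb{T}^n}\nabla p\cdot\varphi\,dx \le \|\nabla p\|_\infty \int_{\mathbb{T}^n} a\,dx < \infty$ whenever $a$ is integrable, in particular whenever $a$ is bounded. (If one truly allows $a = +\infty$ on a set of positive measure, $D(TV_a)$ may fail to be dense, and Proposition~\ref{PAb} then produces the solution only for $u_0$ in the closed subspace $\overline{D(TV_a)}$, which is the appropriate phase space in that degenerate case.) Granting (a) and (b), Proposition~\ref{PAb} furnishes $u$ with the stated regularity and uniqueness. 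Finally, \eqref{ETVP} is just the formal form of $u_t\in-\partial_{\dot{H}_\mathrm{av}^{-s}}TV_a(u)$, exactly as \eqref{ETV} in Definition~\ref{DTV} is the formal form in the $L_b^2$ case: the Riesz isomorphism of $\dot{H}_\mathrm{av}^{-s}$ relative to $L^2_\mathrm{av}$ is, up to a constant, the multiplier $(-\Delta)^{-s}$, whence formally $u_t = -\operatorname{grad}_{\dot{H}_\mathrm{av}^{-s}} TV_a = -(-\Delta)^s\operatorname{grad}_{L^2}TV_a = (-\Delta)^s\operatorname{div}(a\nabla u/|\nabla u|)$. The one place where a little care is needed is (a): when $s>0$ the space $\dot{H}_\mathrm{av}^{-s}$ is a Sobolev space of negative order (not a function space), so one must verify lower semicontinuity against the $\mathcal{D}'$-topology rather than an $L^p$-topology; once the continuous embedding above is noted, the remainder is routine bookkeeping.
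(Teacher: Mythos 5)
Your proposal is correct and follows essentially the same route as the paper, which simply invokes Proposition \ref{PAb} after observing that $TV_a$ is convex and lower semicontinuous on $\dot{H}_\mathrm{av}^{-s}(\mathbb{T}^n)$ viewed as a subspace of $\mathcal{D}'(\mathbb{T}^n)$. Your additional care about the density of $D(TV_a)$ (needed to upgrade from $u_0\in\overline{D(TV_a)}$ to arbitrary $u_0$) and about degenerate weights $a$ is a detail the paper spells out only in the $L^2_b$ case before Theorem \ref{TTV}, and your treatment of it is sound.
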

The operator $(-\Delta)^s$ comes from relation of $\partial_{\dot{H}^{-s}_{\mathrm{av}}}$ and $\partial_{L^2}$.
 Let us give a formal explanation for $s=1$.
 If $f\in\partial_{L^2}TV_a(v)$, then
\[
	TV_a(v+h) - TV_a(v) \geq (f,h)_{L^2} \quad \text{for} \quad h\in L^2(\Omega).
\] 
 Since $\dot{H}_\mathrm{av}^{-1}$ is the dual of $\dot{H}_\mathrm{av}^1$ and since $-\Delta$ is the canonical isometry from $\dot{H}_\mathrm{av}^1$ to $\dot{H}_\mathrm{av}^{-1}$, i.e.,
\[
	-\Delta: u \mapsto((u,\cdot))_1,
\]
we see
\[
	(f,h)_{L^2} = ((-\Delta f,h))_{-1}
\]
since $((u,v))_{-1}=\left( (-\Delta)^{-1}u,v \right)_{L^2}$.
 If $(-\Delta)f\in\dot{H}^{-1}_{\BLUE \mathrm{av}}$, by density of $\dot{H}_\mathrm{av}^1$ in {\BLUE $\dot{H}_\mathrm{av}^{-1}$} 
 , we conclude that $-\Delta f\in\partial_{\dot{H}^{-1}_{\BLUE \mathrm{av}}}TV_a(v)$.

If $\Omega=\mathbb{R}^n$, the definition of the fourth order total variation flow is more involved, especially for $n=1,2$.
 We consider an inner product
\[
 	((u,v))_1 := \int_{\mathbb{R}^n} \nabla u\cdot\nabla v\; dx
\]
for $u,v\in C_c^\infty(\mathbb{R}^n)$.
 Let $D_0^1(\mathbb{R}^n)$ be the completion of $C_c^\infty(\mathbb{R}^n)$ in the norm $\|u\|_1=((u,u))_1^{1/2}$.
 It is a Hilbert space equipped with $((u,v))_1$.
 For $n\geq3$, this space is identified with
\[
	D_0^1(\mathbb{R}^n) = D^1(\mathbb{R}^n) \cap L^{2^*}(\mathbb{R}^n),\quad
	D^1(\mathbb{R}^n) = \left\{ u \in L_{loc}^1(\mathbb{R}^n) \bigm|
	\nabla u \in L^2(\mathbb{R}^n) \right\},
\]
where $2^*=2n/(n-2)$ is the corresponding Sobolev exponent; i.e., the exponent such that $D_0^1(\mathbb{R}^n)\subset L^{2^*}(\mathbb{R}^n)$; see e.g.\ \cite{Gal}.
 However, for $n\leq2$, this $D_0^1(\mathbb{R}^n)$ is not a subspace of $L_{loc}^1(\mathbb{R}^n)$.
 Instead, it is isometrically identified with the quotient space $\dot{D}^1(\mathbb{R}^n):=D^1(\mathbb{R}^n)/\mathbb{R}$  equipped with the inner product $((u,v))_1$; see e.g.\ \cite{Gal}.
 We need to be careful because an element of $D_0^1(\mathbb{R}^n)$ is determined up to constant. In the case $\Omega=\mathbb{T}^n$, the space $D^1(\mathbb{T}^n)$ has a direct sum decomposition
\[
	D^1(\mathbb{T}^n) = \dot{H}_\mathrm{av}^1(\mathbb{T}^n) \oplus \mathbb{R}
\]
which corresponds to a decomposition of $u\in D^1(\mathbb{T}^n)$ as
\[
	u = (u-u_c) + u_c,
\]
where $u_c$ is the average of $u$ over $\mathbb{T}^n$.
 Thus, the space $\dot{D}(\mathbb{T}^n):=D^1(\mathbb{T}^n)/\mathbb{R}$ is identified with a subspace $\dot{H}_\mathrm{av}^1(\mathbb{T}^n)$ of $D^1(\mathbb{T}^n)$.
 In the case of $\mathbb{R}^n$, no such decomposition is available.

 We are interested in the dual space ${\RED (} D_0^1(\mathbb{R}^n) {\RED )}'$.
 Let $-\Delta$ denote the canonical isometry from $D_0^1(\mathbb{R}^n)$ and its dual, i.e.,
\[
	-\Delta : u \mapsto ((u,\cdot))_1.
\]
The inner product of ${\RED (} D_0^1(\mathbb{R}^n) {\RED )}'$ is defined as
\[
	((u,v))_{\left(D_0^1(\mathbb{R}^n)\right)'}
	:= (((-\Delta)^{-1}u, (-\Delta)^{-1}v ))_{D_0^1(\mathbb{R}^n)}.
\]
We introduce a subspace $\tilde{D}^{-1}(\mathbb{R}^n)\subset D_0^1(\mathbb{R}^n)'$ of the form
\begin{align*}
	&\tilde{D}^{-1}(\mathbb{R}^n) = \left\{ w \mapsto \int_{\mathbb{R}^n}uv\;dx \biggm|
	u\in C_c^\infty(\mathbb{R}^n) \right\} 
	\quad\text{if}\quad n \geq 3, \\
	&\tilde{D}^{-1}(\mathbb{R}^n) = \left\{ w \mapsto \int_{\mathbb{R}^n}uw\;dx \biggm|
	u\in C_{c,\mathrm{av}}^\infty(\mathbb{R}^n) \right\}
	\quad\text{if}\quad n =1 \quad\text{or}\quad n =2,
\end{align*}
where
\[
	C_{c,\mathrm{av}}^\infty(\mathbb{R}^n)
	= \left\{ u\in C_c^\infty(\mathbb{R}^n)  \biggm|
	\int_{\mathbb{R}^n} u\;dx = 0 \right\}.
\]
It is well known that $\tilde{D}^{-1}(\mathbb{R}^n)$ is dense in $\left(D_0^1(\mathbb{R}^n)\right)'$; see e.g.\ \cite{Gal}.
 In the case $n=1,2$, the restriction to the average-free space $C_{c,\mathrm{av}}^\infty(\mathbb{R}^n)$ is necessary for the functionals to be well-defined on $D^1(\mathbb{R}^n)/\mathbb{R}$.
 Since $\mathcal{D}=C_c^\infty(\mathbb{R}^n)$ is continuously embedded in $D_0^1(\mathbb{R}^n)$, $D^{-1}(\mathbb{R}^n)=\left(D_0^1(\mathbb{R}^n)\right)'$ can be viewed as a subspace of $\mathcal{D}'(\mathbb{R}^n)$.
 Thus, $TV_a(u)$ for $u\in D^{-1}(\mathbb{R}^n)$ is well-defined and it is convex, lower semicontinuous on the Hilbert space $D^{-1}(\mathbb{R}^n)$ provided that $a$ is a lower semicontinuous.
 Proposition \ref{PAb} guarantees the existence of a unique solution to $u_t\in-\partial_{D^{-1}}TV_a(u)$ with initial datum $u_0\in D^{-1}(\mathbb{R}^n)$.
 This is a rigorous way to interpret $u_t=-\Delta\operatorname{div}\left(a\nabla u/|\nabla u|\right)$.
 Unfortunately, this existence result has a drawback even if $a\equiv1$: the characteristic function $1_K$ of a set $K$ does not belong to $D^{-1}(\mathbb{R}^n)$ unless the Lebesgue measure of $K$ equals zero for $n=1,2$, since $D^{-1}(\mathbb{R}^n)$ requires a kind of average-free condition for $n=1,2$.
 In fact for $u_0\in L^2(\mathbb{R}^n)$ with compact support, $u_0\in D^{-1}(\mathbb{R}^n)$ if and only if $\int_{\mathbb{R}^n}u_0\;dx=0$ as proved in \cite[Lemma 17]{GKL}.
 We have to extend space $D^{-1}$ when $n\leq2$.
 This is quite involved; see discussion at the end of Section \ref{S2S2}.
 We refer to \cite{GKL} for details when $a\equiv1$.
 In the case where $a$ depends on $x$, the argument in \cite{GKL} still works provided that the approximation lemma \cite[Lemma 6]{GKL} can be extended to $TV_a$.

In general, we can consider the space $D_0^s$ (with $s\in\mathbb{R}$) which is the completion of $C_c^\infty(\mathbb{R}^n)$ in the norm
\[
	\|{\RED u}\|_{D_0^s}^{{\RED 2}} = \int_{\mathbb{R}^n} |\xi|^{2s} \left| \hat{u}(\xi)\right|^2 d\xi,
\]
where $\hat{u}$ denotes the Fourier transform of $u$, i.e., $\hat{u}(\xi)=\int_{\mathbb{R}^n}e^{-ix\cdot\xi}u(x)\;dx$.
 For $0<s\leq1$, the space $D_0^s\subset L_{loc}^1$ for $n>2s$ but again this does not hold for $n\leq2s$.
 In a similar way, we are able to define a total variation flow $u_t=(-\Delta)^s\operatorname{div}\left(\nabla u/|\nabla u|\right)$ for $n>2s$, whose existence is proved by Proposition \ref{PAb}.
\begin{remark} \label{RBou}
\begin{enumerate}
\item[(i)] We are able to consider higher-order problems in a domain with boundary.
 Even in the case of $s=1$, there are several choices depending on what kind of boundary condition we impose for the Laplacian.
 If we impose the homogeneous Dirichlet boundary condition, then the resulting equation is formally of the form
\begin{gather*}
	\frac{\partial u}{\partial t} = -\Delta \operatorname{div} \left(a \nabla u/|\nabla u|\right)
	\quad\text{in}\quad \Omega\times(0, \infty) \\
	{\RED u = 0}, \quad
	\operatorname{div} \left(a \nabla u/|\nabla u|\right) = 0
	\quad\text{on}\quad \partial\Omega\times(0, \infty).\end{gather*}
A rigorous formulation is given in \cite{GKM}.
 It is enough to take $H^{-1}(\Omega)=\left(H_0^1(\Omega)\right)'$ as the Hilbert space $H$.
 The Sobolev space $H_0^1(\Omega)$ can be defined similarly as $D_0^1(\mathbb{R}^n)$ by replacing $\mathbb{R}^n$ by $\Omega$.
 By the Poincar\'e inequality, this space $H_0^1(\Omega)$ belongs to $L^2(\Omega)$ so its dual $H^{-1}(\Omega)$ includes $L^2(\Omega)$.
 However, if we consider the Laplace operator with the homogeneous Neumann boundary condition, the correct choice of the space is $D_N^{-1}=\left(D^1(\Omega)/\mathbb{R}\right)'$, and expected boundary condition is
\[
	\frac{\partial u}{\partial\nu} = 0, \quad
	\frac{\partial}{\partial\nu} \operatorname{div} \left(a 		\frac{\nabla u}{|\nabla u|} \right) = 0 \quad\text{on}\quad
	\partial\Omega \times (0,\infty).
\]
Similar to the case $D^{-1}(\mathbb{R}^n)$, the analysis is quite involved.
 It will be discussed in our forthcoming paper.
\item[(i\hspace{-0.1em}i)] Even in the second-order problem, if one would like to consider the Dirichlet problem, i.e., $u=0$ on $\partial\Omega$, one should replace the energy functional $TV_a$ by
\[
	TV_a (u) + \int_{\partial\Omega} a |u|\; d\mathcal{H}^{n-1}.
\]
This type of problem is discussed in \cite{ACM} at least for $a\equiv1$. 
\end{enumerate}
\end{remark}

\subsection{Formulation by the Cahn--Hoffman vector field}\label{S2S2}

It is nontrivial to characterize the subdifferential of $TV_a$ in a given Hilbert space.
 For a general energy $\mathcal{E}$, a standard way is to propose a candidate set $A(u)$ by calculating the Euler--Lagrange operator and prove $A(u)\subset\partial\mathcal{E}(u)$.
 This part is not hard.
 The converse inclusion is difficult.
 Since $\partial\mathcal{E}$ is maximal monotone, it suffices to prove that $A$ is also maximal monotone which yields $A=\partial\mathcal{E}$.
 The proof of monotonicity is not difficult.
 To show maximality, we prove that the resolvent equation $u+\lambda Au\ni f$ is always solvable for $f\in H$ and $\lambda>0$.
 This argument is often carried out by approximation of the operator $A$.
 This procedure is found for example in \cite{GNRS}.

However, if $\mathcal{E}$ is positively homogeneous of degree one (i.e., one-homogeneous), there is an easier method due to F.\ Alter; see \cite[Chapter 1]{ACM}.
 The basic strategy is characterize the subdifferential $\partial\mathcal{E}$ by the polar $\mathcal{E}^0$ of $\mathcal{E}:H\to
(-\infty,\infty]$
\[
	\mathcal{E}^0(v) := \sup \left\{ (u,v)_H \bigm| u \in H,\ \mathcal{E}(u) \leq 1 \right\}.
\]
By convex analysis, $(\mathcal{E}^0)^0=\mathcal{E}$ if $\mathcal{E}$ is a non-negative, lower semicontinuous, convex provided that $\mathcal{E}$ is positively one-homogeneous, i.e.,
\[
	\mathcal{E}(\lambda u) = \lambda\mathcal{E}(u)
	\quad\text{for all}\quad \lambda>0,\quad u \in H.
\]
A key observation is a simple lemma.
\begin{lemma}[\cite{ACM}, Theorem 1.8] \label{LAlt}
Let $\mathcal{E}$ be convex and positively one-homogeneous in a Hilbert space $H$, then $v\in\partial_H \mathcal{E}(u)$ if and only if $\mathcal{E}^0(v)\leq1$ and $(u,v)_H=\mathcal{E}(u)$.
\end{lemma}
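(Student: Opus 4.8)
The plan is to argue directly from the definition of the subdifferential, using positive one-homogeneity to rescale test elements onto the sublevel set $\{\mathcal{E}\le 1\}$ that defines the polar $\mathcal{E}^0$. As a preliminary I would record that $\mathcal{E}(0)=0$: for any $u\in D(\mathcal{E})$ one-homogeneity gives $\mathcal{E}(\lambda u)=\lambda\mathcal{E}(u)\to 0$ as $\lambda\downarrow 0$, so lower semicontinuity yields $\mathcal{E}(0)\le 0$, which together with non-negativity of $\mathcal{E}$ gives $\mathcal{E}(0)=0$. (In fact $\mathcal{E}(0)=0$ is the only consequence of lower semicontinuity and non-negativity that the argument uses, besides convexity and one-homogeneity.)

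For the ``only if'' direction, suppose $v\in\partial_H\mathcal{E}(u)$, so that $\mathcal{E}(u+h)-\mathcal{E}(u)\ge (h,v)_H$ for all $h\in H$; note $u\in D(\mathcal{E})$. Choosing $h=-u$ and using $\mathcal{E}(0)=0$ gives $(u,v)_H\ge\mathcal{E}(u)$, while choosing $h=u$ and using $\mathcal{E}(2u)=2\mathcal{E}(u)$ gives $\mathcal{E}(u)\ge (u,v)_H$; hence $(u,v)_H=\mathcal{E}(u)$. Then for any $w$ with $\mathcal{E}(w)\le 1$, the choice $h=w-u$ yields $\mathcal{E}(w)-\mathcal{E}(u)\ge (w-u,v)_H=(w,v)_H-\mathcal{E}(u)$, so $(w,v)_H\le\mathcal{E}(w)\le 1$; taking the supremum over such $w$ gives $\mathcal{E}^0(v)\le 1$.

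For the ``if'' direction, assume $\mathcal{E}^0(v)\le 1$ and $(u,v)_H=\mathcal{E}(u)$. Writing $w=u+h$, and using $(u,v)_H=\mathcal{E}(u)$, the subgradient inequality $\mathcal{E}(u+h)-\mathcal{E}(u)\ge (h,v)_H$ is equivalent to $\mathcal{E}(w)\ge (w,v)_H$ for all $w\in H$, so it suffices to prove this. The case $\mathcal{E}(w)=+\infty$ is trivial. If $0<\mathcal{E}(w)<\infty$, then $w/\mathcal{E}(w)$ lies in $\{\mathcal{E}\le 1\}$ by one-homogeneity, so the definition of $\mathcal{E}^0$ gives $(w,v)_H/\mathcal{E}(w)\le\mathcal{E}^0(v)\le 1$, i.e.\ $(w,v)_H\le\mathcal{E}(w)$. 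The remaining case $\mathcal{E}(w)=0$ cannot be normalized this way; here I would instead test with $\lambda w$ for all $\lambda>0$: since $\mathcal{E}(\lambda w)=0\le 1$ we get $\lambda(w,v)_H\le\mathcal{E}^0(v)\le 1$ for every $\lambda>0$, which forces $(w,v)_H\le 0=\mathcal{E}(w)$.

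There is no serious obstacle in this argument; it is an elementary manipulation of the subgradient inequality together with the identity $\mathcal{E}(0)=0$ and the homogeneity-driven rescaling. The only point that needs a little care is the degenerate case $\mathcal{E}(w)=0$ in the ``if'' direction, where the normalization $w\mapsto w/\mathcal{E}(w)$ is unavailable and one must instead send the scaling parameter to infinity.
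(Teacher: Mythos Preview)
Your proof is correct. The paper does not give its own proof of this lemma; it simply cites \cite[Theorem 1.8]{ACM}. Your argument is the standard elementary one: exploit one-homogeneity with the test directions $h=\pm u$ to pin down $(u,v)_H=\mathcal{E}(u)$, then deduce $(w,v)_H\le\mathcal{E}(w)$ for all $w$ (which is the subgradient inequality at $u$ once the equality $(u,v)_H=\mathcal{E}(u)$ is in hand) by rescaling $w$ into $\{\mathcal{E}\le 1\}$. Your handling of the degenerate case $\mathcal{E}(w)=0$ via $\lambda\to\infty$ is the right fix, and your preliminary observation $\mathcal{E}(0)=0$ correctly identifies the mild implicit hypotheses (non-negativity and lower semicontinuity) drawn from the paragraph preceding the lemma.
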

It is convenient to introduce a class of vector fields
\[
	X_2 = \left\{ z \in L^\infty(\Omega,\mathbb{R}^n) \bigm| \operatorname{div} z \in L^2(\Omega) \right\}.
\]
For $z\in X_2$, the normal trace $[z\cdot\nu]$ is well defined as an element of $L^\infty(\partial\Omega)$ see e.g.\ \cite{ACM}.
 In many cases, $\mathcal{E}^0$ is computable.
 For example, if $\mathcal{E}=TV$ and $H=L^2(\Omega)$,
\[
	\mathcal{E}^0(v) = \inf \left\{ \|z\|_{L^\infty} \bigm|
	z \in X_2,\ v=-\operatorname{div} z\ \text{in}\ \Omega,\ 
	[z\cdot\nu]=0\ \text{on}\ \partial\Omega \right\}
\]
at least when $\Omega=\mathbb{R}^n$, $\mathbb{T}^n$
 or a bounded domain with Lipschitz boundary \cite{ACM}.
 For higher order problem,
\[
	\mathcal{E}^0(v) = \inf \left\{ \|z\|_{L^\infty} \bigm|
	z \in L^\infty(\Omega, \mathbb{R}^n),\ v=\Delta\operatorname{div} z\ \text{in}\ \mathbb{R}^n,\ 
	\operatorname{div} z \in D_0^1 \right\},
\]
when $\mathcal{E}=TV$ and $H=D^{-1}(\mathbb{R}^n)$; see \cite[Theorem 12]{GKL}.
 Although there is no explicit literature, we expect
\[
	\mathcal{E}^0(v) = \inf \left\{ \|z\|_{L^\infty} \bigm|
	z \in X_2,\ bv=-\operatorname{div} (az)\ \text{in}\ \Omega,\ 
	a[z\cdot\nu] = 0 \ \text{on}\ \partial\Omega \right\}
\]
for general $TV_a$ and $H=L_b^2$.
 In the fractional case, we expect
\[
	\mathcal{E}^0(v) = \inf \left\{ \|z\|_{L^\infty} \bigm|
	z \in X_2,\ 
	v=-(-\Delta)^s \operatorname{div} z,\ 
	\operatorname{div} z \in \dot{H}^s(\mathbb{T}^n) \right\}
\]
for $\mathcal{E}=TV$ and $H=\dot{H}^{-s}(\mathbb{T}^n)$.
 Note that the minimizer is attained, so Lemma \ref{LAlt} implies the characterization of the subdifferential.
 We only state precise results for $\partial_{L^2}TV$ and $\partial_{D^{-1}}TV$.
\begin{thm} \label{TSub} 
\begin{enumerate}
\item[(i)] \cite[Lemma 2.4]{ACM} Assume that $\Omega=\mathbb{R}^n$, $\mathbb{T}^n$
or a bounded domain in $\mathbb{R}^n$ with Lipschitz boundary.
 An element $v\in L^2(\Omega)$ belongs to $\partial_{L^2}TV(u)$ if and only if there is $Z\in X_2(\Omega)$ such that
\item[(ia)] $|Z|\leq1$ in $\Omega$ and $[Z\cdot\nu]=0$ on $\partial\Omega$
\item[(ib)] $v=-\operatorname{div}Z$ in $\Omega$
\item[(ic)] $-(u,\operatorname{div}Z)_{L^2}=TV(u)$
\item[(i\hspace{-0.1em}i)] \cite[Theorem 14]{GKL} Assume that $\Omega=\mathbb{R}^n$.
 An element $v\in D^{-1}(\mathbb{R}^n)$ belongs to $\partial_{D^{-1}}TV(u)$ if and only if there is $Z\in L^\infty(\mathbb{R}^n)$ with $\operatorname{div}Z\in D_0^1(\mathbb{R}^n)$ such that
\item[(i\hspace{-0.1em}ia)] $|Z|\leq1$ in $\Omega$
\item[(i\hspace{-0.1em}ib)] $v=\Delta\operatorname{div}Z$ in $\Omega$
\item[(i\hspace{-0.1em}ic)] $-\langle u,\operatorname{div}Z\rangle=TV(u)$.
\end{enumerate}
Here $\langle\ ,\ \rangle$ denotes the duality pairing between $D^{-1}$ and $D_0^1$.
\end{thm}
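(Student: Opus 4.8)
The plan is to deduce both parts from Lemma~\ref{LAlt}. Since $TV$ is nonnegative, convex, lower semicontinuous and positively one-homogeneous on $H$ (with $H=L^2(\Omega)$ for part (i) and $H=D^{-1}(\mathbb{R}^n)$ for part (ii)), Lemma~\ref{LAlt} reduces the problem to two tasks: computing the polar $\mathcal{E}^0$ of $\mathcal{E}=TV$ on $H$, and rewriting the tangency identity $(u,v)_H=TV(u)$ in terms of the vector field produced in the first task. For the polar, observe that $TV$ is, by its very definition, the support function on $H$ of the convex set
\[
	K := \overline{\left\{ -\operatorname{div}\varphi \mid \varphi\in(C_c^\infty(\Omega))^n,\ |\varphi|\le1 \right\}},
\]
with closure taken in $H$; for $H=D^{-1}(\mathbb{R}^n)$ the canonical pairing $-\langle u,\operatorname{div}\varphi\rangle$ replaces the $L^2$ inner product, and $-\operatorname{div}\varphi$ is identified with $\Delta\psi$ via the potential $\psi\in D_0^1$ solving $\Delta\psi=-\operatorname{div}\varphi$. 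As $K$ is closed, convex and contains $0$, convex duality gives $\mathcal{E}^0=\gamma_K$, the Minkowski gauge $\gamma_K(v)=\inf\{\lambda>0\mid v\in\lambda K\}$.

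The substantive point is the identification of $K$, namely $K=\{-\operatorname{div} z\mid z\in X_2(\Omega),\ \|z\|_{L^\infty}\le1,\ [z\cdot\nu]=0\text{ on }\partial\Omega\}$ in the $L^2$ case and $K=\{\Delta\operatorname{div} z\mid z\in L^\infty(\mathbb{R}^n),\ \|z\|_{L^\infty}\le1,\ \operatorname{div} z\in D_0^1(\mathbb{R}^n)\}$ in the $D^{-1}$ case — which, once established, turns the gauge formula into exactly the polar $\mathcal{E}^0$ displayed before the theorem (cf.\ also \cite[Theorem~12]{GKL}). The inclusion ``$\supseteq$'' is an approximation lemma: given an admissible $z$, mollify and cut off to produce $\varphi_k\in(C_c^\infty)^n$ with $|\varphi_k|\le1+\varepsilon_k$, $\varepsilon_k\downarrow0$, and $\operatorname{div}\varphi_k\to\operatorname{div} z$ in $L^2$ (resp.\ in $D_0^1$); the vanishing normal trace is precisely what allows the cutoff near $\partial\Omega$ without destroying convergence of the divergences, and rescaling by $(1+\varepsilon_k)^{-1}$ restores $|\varphi_k|\le1$. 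The inclusion ``$\subseteq$'', and simultaneously attainment of the infimum in $\gamma_K$, follow from weak-$*$ compactness: if $-\operatorname{div}\varphi_k\to v$ in $H$ with $|\varphi_k|\le1$, pass to a subsequence converging weakly-$*$ in $L^\infty$ to some $z$ with $\|z\|_{L^\infty}\le1$ and $-\operatorname{div} z=v$ in $\mathcal{D}'$; since the $\operatorname{div}\varphi_k$ converge (in $L^2$, resp.\ $D_0^1$) the $\varphi_k$ are bounded in $X_2$ (resp.\ its $D^{-1}$ analogue), so the normal trace is continuous along the sequence and $[z\cdot\nu]=0$. Consequently $\mathcal{E}^0(v)\le1$ if and only if there is $Z$ as in (ia)--(ib), resp.\ (iia)--(iib). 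In part (ii) one must additionally carry the average-free constraints built into the definition of $D_0^1(\mathbb{R}^n)$ and $D^{-1}(\mathbb{R}^n)$ for $n\le2$; this, together with the approximation lemma, is where I expect the real work — it is essentially the content of \cite[Theorem~12]{GKL}.

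Finally, the tangency identity $(u,v)_H=TV(u)$ must be matched with (ic), resp.\ (iic). For $H=L^2$ this is immediate, since $(u,v)_{L^2}=-(u,\operatorname{div} Z)_{L^2}$. For $H=D^{-1}(\mathbb{R}^n)$, using that $-\Delta$ is the canonical isometry $D_0^1\to D^{-1}$ and that $((u,v))_{D^{-1}}=\langle u,(-\Delta)^{-1}v\rangle$, with $v=\Delta\operatorname{div} Z$ one gets $(-\Delta)^{-1}v=-\operatorname{div} Z$, so $((u,v))_{D^{-1}}=-\langle u,\operatorname{div} Z\rangle$, which is (iic). For the converse directions, any $Z$ satisfying (ia)--(ic) (resp.\ (iia)--(iic)) is admissible in the formula for $\mathcal{E}^0$, so $\mathcal{E}^0(v)\le\|Z\|_{L^\infty}\le1$, and (ic) (resp.\ (iic)) is precisely the tangency identity; Lemma~\ref{LAlt} then yields $v\in\partial_H TV(u)$. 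The main obstacle, as indicated, is the vector-field approximation lemma underpinning the identification of $K$ — controlling the $L^\infty$ bound, the integrability of the divergence, and the normal trace all at once — reinforced in part (ii) by the $D^{-1}(\mathbb{R}^n)$ bookkeeping for $n\le2$.
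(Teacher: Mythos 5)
Your proposal is correct and follows essentially the same route as the paper: Section \ref{S2S2} derives Theorem \ref{TSub} from Lemma \ref{LAlt} by computing the polar $\mathcal{E}^0$ as the gauge of the set of divergences (resp.\ $\Delta\operatorname{div}$) of unit-$L^\infty$ vector fields, noting that the infimum is attained, and matching the tangency identity $(u,v)_H=TV(u)$ with (ic)/(iic) exactly as you do. The substantive ingredients you flag as "the real work" --- the approximation lemma identifying the closure of $\{-\operatorname{div}\varphi\}$ and the $D^{-1}$ bookkeeping for $n\le2$ --- are precisely what the paper defers to \cite[Lemma 2.4]{ACM} and \cite[Theorems 12, 14]{GKL}.
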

This vector field $Z$ is often called a \emph{Cahn--Hoffman vector field}.
 If $u$ is Lipschitz continuous, the conditions (ia), (ic) (or (i\hspace{-0.1em}ia), (i\hspace{-0.1em}ib)) imply that $Z=\nabla u/|\nabla u|$ whenever $\nabla u(x)\neq0$ for almost all $x$.
 Thus, for example, for the second-order problem, the subdifferential $-\operatorname{div}Z$ formally agrees with the standard Euler--Lagrange operator $-\operatorname{div}\left(\nabla u/|\nabla u|\right)$ at least where $\nabla u(x)\neq0$.
 If $u$ is a special class of functions, the subdifferential at such $u$ is easily computable when $n=1$ including the case where there are weights $a,b$;
 see \cite{FG}, \cite{GGK}. 
 Once the subdifferential is calculated, we are able to give an explicit formulation of a solution.
\begin{thm} \label{TExp} 
\begin{enumerate}
\item[(i)] Assume that $u\in C\left([0,\infty),L^2(\Omega)\right)$.
 Then $u$ is a solution of $u_t\in-\partial_{L^2}TV(u)$ with $u_0=u(0)$ if and only if there exists $Z\in L^\infty\left(\Omega\times(0,\infty)\right)$ with
\[
	\operatorname{div}Z \in \bigcap_{\delta>0} L^2 \left( \delta,\infty; L^2(\Omega)\right)
\]
satisfying
\[
	\left\{
\begin{array}{l}
	u_t = \operatorname{div}Z \ \text{in}\ L^2(\Omega) \\	
	\left|Z(x,t)\right| \leq 1\ \text{for a.e.}\ x \in \Omega, \ 
	\left[ Z(x,t) \cdot \nu \right] = 0\ \text{for a.e.}\ x \in \partial\Omega \\
	-(u,\operatorname{div}Z)_{L^2} = TV(u)
\end{array}
	\right.
\]
for a.e.\ $t>0$.
 (If $TV(u_0)<\infty$, $\delta=0$ is allowed.)
\item[(i\hspace{-0.1em}i)] Assume that $u\in C\left([0,\infty),D^{-1}(\mathbb{R}^n)\right)$.
 Then $u$ is a solution of $u_t\in-\partial_{D^{-1}}TV(u)$ with $u_0=u(0)$ if and only if there exists $Z\in L^\infty\left(\mathbb{R}^n\times(0,\infty)\right)$ with
\[
	\operatorname{div}Z \in \bigcap_{\delta>0} L^2 \left( \delta,\infty; D_0^1(\mathbb{R}^n)\right)
\]
satisfying
\[
	\left\{
\begin{array}{l}
	u_t = -\Delta\operatorname{div}Z \ \text{in}\ D^{-1}(\mathbb{R}^n) \\	
	\left|Z(x,t)\right| \leq 1\ \text{for a.e.}\ x \in \mathbb{R}^n, \\
	\langle u,\operatorname{div}Z\rangle = -TV(u)
\end{array}
	\right.
\]
for a.e.\ $t>0$.
 (If $TV(u_0)<\infty$, $\delta=0$ is allowed.)
\end{enumerate}
\end{thm}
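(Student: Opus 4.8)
The plan is to read off both equivalences from the abstract notion of a solution (Proposition \ref{PAb}, together with Definition \ref{DTV} in case (i) and its $D^{-1}(\mathbb{R}^n)$-analogue, i.e.\ Proposition \ref{PAb} with $H=D^{-1}(\mathbb{R}^n)$ and $\mathcal{E}=TV$, in case (ii)) combined with the time-wise description of the subdifferential in Theorem \ref{TSub}. For (i): by definition, $u\in C([0,\infty),L^2(\Omega))$ with $u_t\in\bigcap_{\delta>0}L^2(\delta,\infty;L^2(\Omega))$ and $u(0)=u_0$ is a solution of $u_t\in-\partial_{L^2}TV(u)$ exactly when, for a.e.\ $t>0$, the element $v:=-u_t(t)$ lies in $\partial_{L^2}TV(u(t))$. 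At each such $t$, Theorem \ref{TSub}(i) converts the membership $v\in\partial_{L^2}TV(u(t))$ into the existence of $Z(t)\in X_2(\Omega)$ with $|Z(t)|\le1$, $[Z(t)\cdot\nu]=0$, $v=-\operatorname{div}Z(t)$ (equivalently $u_t(t)=\operatorname{div}Z(t)$) and $-(u(t),\operatorname{div}Z(t))_{L^2}=TV(u(t))$ --- precisely the three displayed conditions. The converse is immediate: if a field $Z$ with those properties is given, then for a.e.\ $t$ the hypotheses (ia)--(ic) of Theorem \ref{TSub}(i) hold with $v=-u_t(t)$, so $u_t(t)\in-\partial_{L^2}TV(u(t))$, and together with $u(0)=u_0$ this is the definition of a solution. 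Part (ii) is the same argument with Theorem \ref{TSub}(ii) replacing \ref{TSub}(i); one only tracks signs, noting that $-u_t(t)=\Delta\operatorname{div}Z(t)$ in $D^{-1}(\mathbb{R}^n)$ is the same as $u_t(t)=-\Delta\operatorname{div}Z(t)$, that $\operatorname{div}Z(t)\in D_0^1(\mathbb{R}^n)$ is asserted by that theorem, and that $-\langle u(t),\operatorname{div}Z(t)\rangle=TV(u(t))$ is the same as $\langle u(t),\operatorname{div}Z(t)\rangle=-TV(u(t))$.

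The one genuinely technical point --- and the main obstacle --- is to pass from the family $\{Z(t)\}_{t>0}$ produced pointwise in $t$ to a single $Z\in L^\infty(\Omega\times(0,\infty))$ that is \emph{jointly} measurable and has divergence in the claimed Bochner space. The uniform bound $|Z(x,t)|\le1$ makes $Z\in L^\infty(\Omega\times(0,\infty))$ automatic once joint measurability is secured, and the divergence regularity is free as well: in case (i) one has $\operatorname{div}Z(t)=u_t(t)$, and in case (ii) $\operatorname{div}Z(t)=(-\Delta)^{-1}u_t(t)$ is the image of $u_t(t)$ under the inverse of the canonical isometry $-\Delta\colon D_0^1(\mathbb{R}^n)\to D^{-1}(\mathbb{R}^n)$, so $\operatorname{div}Z$ inherits $\bigcap_{\delta>0}L^2(\delta,\infty;L^2(\Omega))$, resp.\ $\bigcap_{\delta>0}L^2(\delta,\infty;D_0^1(\mathbb{R}^n))$, directly from the corresponding property of $u_t$ in Proposition \ref{PAb}. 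For measurability I would first observe that, once $\operatorname{div}Z(t)$ is pinned to $u_t(t)$, condition (ic) --- and likewise (iic) --- is automatically satisfied, since $(u(t),\operatorname{div}Z(t))_{L^2}=(u(t),u_t(t))_{L^2}$, resp.\ $\langle u(t),\operatorname{div}Z(t)\rangle$, does not depend on the particular admissible $Z(t)$. Hence it suffices to choose $Z(t)$ measurably in the closed convex set
\[
	C(t):=\bigl\{\,Z\in L^2(\Omega,\mathbb{R}^n)\ \big|\ \operatorname{div}Z=u_t(t),\ [Z\cdot\nu]=0,\ \|Z\|_{L^\infty(\Omega)}\le1\,\bigr\},
\]
which is nonempty for a.e.\ $t$ by the previous paragraph, and is closed and convex in $L^2(\Omega,\mathbb{R}^n)$ because $\operatorname{div}$ is a closed operator, the normal-trace map is continuous on $X_2(\Omega)$, and the $L^\infty$-unit ball is stable under $L^2$-limits along a.e.-convergent subsequences. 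Since $t\mapsto u_t(t)$ is measurable, $t\mapsto C(t)$ is a measurable multifunction with nonempty closed values, so by the Kuratowski--Ryll-Nardzewski selection theorem (or, concretely, by taking $Z(t)$ to be the element of $C(t)$ of least $L^2(\Omega)$-norm, i.e.\ the metric projection of $0$ onto $C(t)$) there is a measurable selection $t\mapsto Z(t)$; this is the desired $Z$. In case (ii) one runs the same construction with the constraint $[Z\cdot\nu]=0$ removed and $\operatorname{div}Z=u_t(t)$ replaced by $\operatorname{div}Z=(-\Delta)^{-1}u_t(t)\in D_0^1(\mathbb{R}^n)$.

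An alternative that builds measurability in from the start is to use the minimizing movements scheme of \eqref{min_mov}--\eqref{min_mov_approx}: at step $k$ the Euler--Lagrange relation for $u^N_k$ reads $N(u^N_{k-1}-u^N_k)\in\partial_{L^2}TV(u^N_k)$, so Theorem \ref{TSub} furnishes $Z^N_k$ with $|Z^N_k|\le1$ and $\operatorname{div}Z^N_k=N(u^N_k-u^N_{k-1})$; assembling these into a piecewise-constant-in-time field $Z^N$, extracting a weak-$*$ limit $Z$ in $L^\infty(\Omega\times(0,\infty))$, and passing to the limit in $\operatorname{div}Z^N\to u_t$ and in the energy identity (using lower semicontinuity of $TV$ and the energy-dissipation estimate) yields a qualifying $Z$ directly. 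Either way, the endpoint assertion ``$\delta=0$ when $TV(u_0)<\infty$'' needs no extra work: it is exactly the case $u_0\in D(TV)$ of Proposition \ref{PAb}, transported through the identity $\operatorname{div}Z(t)=u_t(t)$, resp.\ its $D^{-1}$-version. Thus the proof reduces to (a) unwinding the definition, (b) applying Theorem \ref{TSub} at a.e.\ time, and (c) the measurable-selection step, which is the only non-routine ingredient.
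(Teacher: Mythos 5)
Your reduction coincides with the paper's: both directions follow by unwinding Proposition \ref{PAb} and applying Theorem \ref{TSub} at almost every time, the energy condition (ic)/(iic) being automatic once $\operatorname{div}Z$ is pinned to $u_t$ (resp.\ $(-\Delta)^{-1}u_t$), so the only substantive point is joint measurability of $Z$ in $(x,t)$. There you take a genuinely different route. The paper (following \cite[Section 2.4]{ACM} for (i) and \cite[Theorem 15, Lemma 16]{GKL} for (ii)) approximates the Bochner function $u_t$ by functions piecewise constant in time, applies the static characterization on each subinterval, and extracts a weak-$*$ limit of the resulting piecewise-constant-in-time fields; you instead apply a measurable selection theorem to the multifunction $t\mapsto C(t)$. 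The selection route is conceptually cleaner and avoids a limit passage in the energy identity; the approximation route avoids having to verify measurability of a multifunction. Your second alternative, via minimizing movements, is essentially the approximation route in disguise and would also work.

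There is, however, a concrete defect in your selection step as written: you place $C(t)$ inside $L^2(\Omega,\mathbb{R}^n)$ and propose the metric projection of $0$ onto $C(t)$ as the canonical selection. For bounded $\Omega$ or $\mathbb{T}^n$ this is fine, but for $\Omega=\mathbb{R}^n$ (the only setting of part (ii), and one of those admitted in part (i)) an admissible field with $|Z|\le1$ need not be square-integrable, and $C(t)\cap L^2(\Omega,\mathbb{R}^n)$ can be empty even though Theorem \ref{TSub} supplies an admissible $Z$; for instance, the fields constructed in Section \ref{S4} for $u=a\mathbf{1}_{B_R}$ in $n=1$ satisfy $|Z|\equiv1$ on the complement of the ball, and any competitor with the same divergence and normal trace there is likewise of unit modulus, so no $L^2$ selection exists. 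The remedy is to run the selection in the closed unit ball of $L^\infty(\Omega,\mathbb{R}^n)$ equipped with the weak-$*$ topology (compact and metrizable since $L^1$ is separable): the constraints $\operatorname{div}Z=u_t(t)$ and $[Z\cdot\nu]=0$ are weak-$*$ closed, the graph of $t\mapsto C(t)$ is product-measurable because it is a countable intersection of sets $\{(t,Z): F_\varphi(Z)=g_\varphi(t)\}$ with $F_\varphi$ weak-$*$ continuous and $g_\varphi$ measurable, and a weak-$*$ measurable bounded selection identifies with an element of $L^\infty\left(\Omega\times(0,\infty)\right)$ by duality with $L^1\left(0,\infty;L^1(\Omega)\right)$. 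Relatedly, your assertion that measurability of $t\mapsto C(t)$ follows ``since $t\mapsto u_t(t)$ is measurable'' needs exactly this kind of justification; it is not immediate.
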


This follows from the characterization of the subdifferential (Theorem \ref{TSub}) except that the Cahn--Hoffman vector field should be chosen to be measurable both in $x$ and $t$.
 For the second-order case \cite[Section 2.4]{ACM}, this can be done by recalling that Bochner integrable functions can be well approximated by piecewise constant functions.
 For the fourth-order problem, although the idea is similar, the situation is slightly different as discussed in \cite[Theorem 15, Lemma 16]{GKL}.
 By the way this characterization of a solution using Cahn--Hoffman vector fields for the fourth-order of the problem was already proposed by \cite{GKM}.
 For the case $\Omega=\mathbb{T}^n$, a similar characterization of a solution is given in \cite[Theorem 1]{GMR} for general $s>0$ with $a\equiv1$.
 It is of the form.
\begin{thm} \label{TCPer}
For $s\in\mathbb{R}$, assume that $u\in C\left([0,\infty),\dot{H}_\mathrm{av}^{-s}(\mathbb{T}^n)\right)$.
 Then $u$ is a solution of \eqref{ETVP} with $a\equiv1$ and initial datum $u(0)=u_0\in \dot{H}_\mathrm{av}^{-s}(\mathbb{T}^n)$ if and only if there exists $Z\in L^\infty\left(\Omega\times(0,\infty)\right)$ with
\[
	\operatorname{div}Z \in \bigcap_{\delta>0} L^2 \left(\delta,\infty; \dot{H}_\mathrm{av}^{-s}(\mathbb{T}^n)\right)
\] 
satisfying
\[
	\left\{
\begin{array}{l}
	u_t = (-\Delta)^s \operatorname{div}Z \ \text{in}\ \dot{H}_\mathrm{av}^{-s}(\mathbb{T}^n) \\	
	\left|Z(x,t)\right| \leq 1\ \text{for a.e.}\ x \in \mathbb{T}^n \\
	\langle u,\operatorname{div}Z\rangle = -TV(u)
\end{array}
	\right.
\]
for a.e.\ $t>0$.
 (If $TV(u_0)<\infty$, $\delta=0$ is allowed).
\end{thm}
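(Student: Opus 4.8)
The plan is to deduce the characterization from the definition of a solution in Theorem~\ref{TPV} together with a pointwise-in-time description of the subdifferential $\partial_{\dot H^{-s}_{\mathrm{av}}}TV$, in direct analogy with the way Theorem~\ref{TExp} is extracted from Theorem~\ref{TSub}. On the Hilbert space $\dot H^{-s}_{\mathrm{av}}(\mathbb{T}^n)$ the functional $TV$ is nonnegative, convex, lower semicontinuous and positively one-homogeneous, so Lemma~\ref{LAlt} applies: $v\in\partial_{\dot H^{-s}_{\mathrm{av}}}TV(u)$ if and only if $TV^{0}(v)\le 1$ and $((u,v))_{-s}=TV(u)$, where $TV^{0}$ is the polar of $TV$ taken with respect to the metric $((\cdot,\cdot))_{-s}$. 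So the first task is to compute $TV^{0}$ and to rephrase these two conditions in terms of a vector field.

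For the equality condition I would use that, for every $s\in\mathbb{R}$, $(-\Delta)^{s}$ is the canonical isometric isomorphism of $\dot H^{s}_{\mathrm{av}}$ onto $\dot H^{-s}_{\mathrm{av}}$, whence $((u,(-\Delta)^{s}w))_{-s}=\langle u,w\rangle$ for $w\in\dot H^{s}_{\mathrm{av}}$, with $\langle\cdot,\cdot\rangle$ the $\dot H^{-s}_{\mathrm{av}}$--$\dot H^{s}_{\mathrm{av}}$ pairing; hence, once $v=(-\Delta)^{s}\operatorname{div}Z$ is known, $((u,v))_{-s}=TV(u)$ is equivalent to $\langle u,\operatorname{div}Z\rangle=TV(u)$. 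For the polar, set $\psi:=(-\Delta)^{-s}v\in\dot H^{s}_{\mathrm{av}}$, so that $TV^{0}(v)=\sup\{\langle w,\psi\rangle\mid TV(w)\le 1\}$, and a convex-duality argument on the torus in the spirit of Strang--Temam and Anzellotti (Fenchel--Rockafellar for $TV$ and the operator $z\mapsto-\operatorname{div}z$) identifies this with
\[
	TV^{0}(v)=\inf\bigl\{\|z\|_{L^{\infty}}\bigm| z\in L^{\infty}(\mathbb{T}^n;\mathbb{R}^n),\ -\operatorname{div}z=\psi\bigr\},
\]
with $\inf\emptyset=+\infty$, the infimum being attained by weak-$*$ compactness of closed balls in $L^{\infty}(\mathbb{T}^n;\mathbb{R}^n)$ together with continuity of $z\mapsto\operatorname{div}z$ in $\mathcal{D}'$. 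Thus $TV^{0}(v)\le 1$ holds precisely when there is $Z\in L^{\infty}(\mathbb{T}^n;\mathbb{R}^n)$ with $|Z|\le 1$, $\operatorname{div}Z\in\dot H^{s}_{\mathrm{av}}$ and $v=-(-\Delta)^{s}\operatorname{div}Z$. Combining this with the equality condition and tracking signs --- apply it with $v=-u_t(t)$, so that $v=(-\Delta)^{s}\operatorname{div}(-Z)$ and $((u,v))_{-s}=TV(u)$ becomes $\langle u,\operatorname{div}Z\rangle=-TV(u)$ --- one obtains: $u_t(t)\in-\partial_{\dot H^{-s}_{\mathrm{av}}}TV(u(t))$ if and only if there is $Z(\cdot,t)$ satisfying the three displayed conditions of the theorem together with $u_t(t)=(-\Delta)^{s}\operatorname{div}Z(\cdot,t)$.

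It then remains to promote this ``for a.e.\ $t$'' statement to the existence of a single jointly measurable $Z\in L^{\infty}(\mathbb{T}^n\times(0,\infty))$ with $\operatorname{div}Z\in\bigcap_{\delta>0}L^{2}(\delta,\infty;\dot H^{s}_{\mathrm{av}})$. Here I would follow the scheme of \cite[Section~2.4]{ACM} (cf.\ \cite[Theorem~15, Lemma~16]{GKL}): note that $\operatorname{div}Z(\cdot,t)=(-\Delta)^{-s}u_t(t)$ is forced and already defines an element of $L^{2}(\delta,\infty;\dot H^{s}_{\mathrm{av}})$; approximate $t\mapsto u_t(t)$ in $L^{2}(\delta,\infty;\dot H^{-s}_{\mathrm{av}})$ by piecewise-constant-in-time maps, on each time step pick a competitor via the static characterization (or invoke a measurable selection theorem for the multifunction $t\mapsto\{Z\in L^{\infty}(\mathbb{T}^n;\mathbb{R}^n)\mid|Z|\le 1,\ \operatorname{div}Z=(-\Delta)^{-s}u_t(t)\}$, which has nonempty weak-$*$ closed values), and pass to a weak-$*$ limit in $L^{\infty}(\mathbb{T}^n\times(\delta,\infty))$; the bound $|Z|\le 1$ and the identity $u_t=(-\Delta)^{s}\operatorname{div}Z$ survive the limit because $(-\Delta)^{-s}$ is an isometry, while $\langle u,\operatorname{div}Z\rangle=-TV(u)$ is automatic once $\operatorname{div}Z=(-\Delta)^{-s}u_t$ and $-u_t\in\partial TV(u)$ are known. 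Conversely, if such a $Z$ is given, reading the static characterization in reverse yields $u_t(t)\in-\partial_{\dot H^{-s}_{\mathrm{av}}}TV(u(t))$ for a.e.\ $t$, and the regularity required of $u$ follows from $u_t=(-\Delta)^{s}\operatorname{div}Z$ and boundedness of $(-\Delta)^{s}\colon\dot H^{s}_{\mathrm{av}}\to\dot H^{-s}_{\mathrm{av}}$, so $u$ solves \eqref{ETVP}. If $TV(u_0)<\infty$ then $u_0\in D(TV)$, and Proposition~\ref{PAb} allows $\delta=0$. I expect the genuine obstacle to be the polar computation --- proving the duality formula for $TV^{0}$ in the nonstandard metric $((\cdot,\cdot))_{-s}$ and checking that the defining infimum is attained with no constraint-qualification gap; the joint-measurability step, though conceptually routine, must be handled with some care because the Cahn--Hoffman field is not uniquely determined.
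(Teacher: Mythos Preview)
Your proposal is correct and follows essentially the same route as the paper's argument. The paper reduces to the characterization $f\in\partial_{\dot H^{-s}_{\mathrm{av}}}TV(u)$ iff $TV^{0}\bigl((-\Delta)^{-s}f\bigr)\le 1$ in the middle space $L^{2}_{\mathrm{av}}$ together with $\int_{\mathbb{T}^n}(-\Delta)^{-s}f\cdot u\,dx=TV(u)$, citing \cite[Lemma~3.1]{GK} and \cite[Theorem~1]{GMR}; this is exactly your step of setting $\psi=(-\Delta)^{-s}v$ and recognizing $TV^{0}(v)=\sup\{\langle w,\psi\rangle:TV(w)\le 1\}$ as the $L^{2}$-polar of $TV$ at $\psi$, after which the Strang--Temam/Anzellotti formula $\inf\{\|z\|_{L^{\infty}}:-\operatorname{div}z=\psi\}$ applies. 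Your handling of signs and of joint measurability (piecewise-constant approximation and weak-$*$ compactness, with $\operatorname{div}Z=(-\Delta)^{-s}u_{t}$ already forced) matches the scheme the paper invokes from \cite{ACM} and \cite{GKL}; note in passing that the displayed regularity in the statement should read $\operatorname{div}Z\in L^{2}(\delta,\infty;\dot H^{s}_{\mathrm{av}})$, consistent with the $\mathcal{E}^{0}$-formula the paper records just before Theorem~\ref{TSub} and with your own computation.
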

This follows from the fact that $f\in\partial_{\dot{H}_\mathrm{av}^{-s}}TV(u)$ for $u\in\dot{H}_\mathrm{av}^{-s}$ if and only if $TV^0\left((-\Delta)^{-s}f\right)\leq1$ and $\int_{\mathbb{T}^n}(-\Delta)^{-s}f\cdot v\;dx=TV(u)$, when $TV^0$ is taken in a middle space $L_\mathrm{av}^2(\mathbb{T}^n)=\dot{H}_\mathrm{av}^0(\mathbb{T}^n)$.
 For the case $s=1$, this is found in \cite[Lemma 3.1]{GK}.

As mentioned before, in the case $s=1$ and $\Omega=\mathbb{R}^n$, the gradient flow $u_t\in-\partial_{D^{-1}}TV(u)$ is not enough to study the evolution of a characteristic function for $n=1,2$.
 We have to extend the function space $D^{-1}$.
 We quickly review the way it is done in \cite{GKL}.
 For this purpose, we take $\psi\in L^2(\mathbb{R}^n)$ with compact support such that $\int_{\mathbb{R}^n}\psi\;dx\neq0$.
 We set
\[
	E_\psi^{-1} = \left\{e+c\psi \bigm| w \in D^{-1}(\mathbb{R}^n),\ c\in\mathbb{R} \right\}.
\]
This space in independent of the choice of $\psi$ so we simply write  by $E^{-1}$.
 For $n\geq3$, $E^{-1}=D^{-1}$.
 We also denote $E_0^1=D^1$ if $n\leq2$ and $E_0^1=D_0^1$ if $n\geq3$.
 The space $E^{-1}$ can be considered as a dual space of $E_0^1$.
 The inner product for $n\leq2$ is defined by
\begin{align*}
	&((v_1,v_2))_{E_0^1} := (([v_1],[v_2]))_{D_0^1}
	+ \int_{\mathbb{R}^n}\psi v_1\;dx \int_{\mathbb{R}^n} \psi v_2\; dx \\
	&((u_1,u_2))_{E^{-1}} := ((w_1,w_2))_{D^{-1}} + c_1 c_2
\end{align*}
for $u_i=w_i+c_i\psi$, $w_i\in D^{-1}$, $c_i\in\mathbb{R}$, $v_i\in E_0^1$, where $\int_{\mathbb{R}^n}\psi\;dx=1$.
 It turns out that the ``partial'' gradient flow $u_t\in-\partial_{D^{-1}}TV(u)$ is exactly what we want.
 Calculating the subdifferential $\partial_{E^{-1}}TV(u)$ and projecting to $D^{-1}$ yields $\partial_{D^{-1}}TV(u)$.
 It turns out the flow is independent of the choice of $\psi$.
 We have a characterization of this flow in terms of a Cahn--Hoffman vector field extending the one obtained in Theorem \ref{TExp}(ii). We prefer to use this characterization as a definition of a solution.
\begin{definition} \label{DEsol}
Assume that $u_0\in E^{-1}$.
 We say that $u\in C\left([0,\infty),E^{-1}\right)$ with $u(0)=u_0$ and $u_t\in L_{loc}^2\left((0,\infty),D^{-1}\right)$ is a solution of 
\begin{equation} \label{ETV4}
	u_t = -\Delta \operatorname{div} \left( \nabla u/|\nabla u| \right) \quad\text{in}\quad
	\mathbb{R}^n \times (0,\infty)
\end{equation}
with initial datum $u_0$ if there exists $Z\in L^\infty\left(\mathbb{R}^n\times(0,\infty)\right)$ with
\[
	\operatorname{div}Z \in \bigcap_{\delta>0} L^2 \left(\delta,\infty; E_0^1(\mathbb{R}^n)\right)
\]
satisfying
\[
	\left\{
\begin{array}{l}
	u_t = -\Delta\operatorname{div}Z \ \text{in}\ D^{-1}(\mathbb{R}^n) \\	
	\left|Z(x,t)\right| \leq 1\ \text{for a.e.}\ x \in \mathbb{R}^n \\
	\langle u,\operatorname{div}Z\rangle = -TV(u)
\end{array}
	\right.
\]
for a.e.\ $t>0$, where $\langle\ ,\ \rangle$ denotes the duality pairing between $E^{-1}$ and $E_0^1$. 
\end{definition}
Although there are several technical steps, we conclude the unique existence of the solution \cite[Theorem 2]{GKL}.
\begin{thm} \label{TTV4} 
For $u_0\in E^{-1}$, there exists a unique solution $u$ to \eqref{ETV4} with initial datum $u_0$.
\end{thm}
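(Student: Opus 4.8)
The plan is to deduce Theorem \ref{TTV4} from the abstract existence result Proposition \ref{PAb}, applied in the Hilbert space $H = E^{-1}$, together with an explicit Cahn--Hoffman description of $\partial_{E^{-1}} TV$. First I would check that $D(TV)$ is dense in $E^{-1}$; this holds because $C_c^\infty(\mathbb{R}^n)$, on which $TV$ is finite, is dense in $E^{-1}$ by the construction of $E^{-1}$ as the dual of $E_0^1$. Since $E^{-1} \hookrightarrow \mathcal{D}'(\mathbb{R}^n)$, the functional $TV$ is convex, lower semicontinuous on $E^{-1}$, and not identically $\infty$; hence Proposition \ref{PAb} with $\mathcal{E} = TV$ produces, for every $u_0 \in E^{-1}$, a unique $u \in C([0,\infty), E^{-1})$ with $u_t \in \bigcap_{\delta>0} L^2((\delta,\infty), E^{-1})$ solving $u_t \in -\partial_{E^{-1}} TV(u)$ for a.e.\ $t$ and $u(0) = u_0$. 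This gives the candidate; the remaining task is to identify it with a solution in the sense of Definition \ref{DEsol}.

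The core step is the identification
\[
	\partial_{E^{-1}} TV(u) = \bigl\{\, -\Delta \operatorname{div} Z \,\bigm|\, Z \in L^\infty(\mathbb{R}^n;\mathbb{R}^n),\ |Z| \le 1,\ \operatorname{div} Z \in E_0^1,\ \langle u, \operatorname{div} Z\rangle = -TV(u) \,\bigr\}.
\]
As $TV$ is positively one-homogeneous, Lemma \ref{LAlt} reduces this to computing the polar $TV^0$ on $E^{-1}$ and verifying $TV^0(v) = \inf\{\,\|Z\|_{L^\infty} : v = -\Delta \operatorname{div} Z,\ \operatorname{div} Z \in E_0^1\,\}$ with the infimum attained. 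The inclusion that any such $Z$ yields an element of the subdifferential is the easy direction, from the definition of the pairing between $E^{-1}$ and $E_0^1$ and the isometry $-\Delta$. The reverse inclusion and attainment are obtained by approximating $TV$ by smoother one-homogeneous functionals, treating the regularized problems, and extracting a weak-$*$ limit $Z$ in $L^\infty$ of the corresponding vector fields; this is where the extension of the approximation lemma \cite[Lemma 6]{GKL} and the subtleties of $E^{-1}$ for $n \le 2$ genuinely enter. A useful by-product is that every element of $\partial_{E^{-1}} TV(u)$ has the form $-\Delta \operatorname{div} Z$ with $\operatorname{div} Z \in E_0^1$ and therefore lies automatically in $D^{-1}$; so $u_t(t) \in D^{-1}$ for a.e.\ $t$, which furnishes the regularity $u_t \in L^2_{loc}((0,\infty), D^{-1})$ demanded by Definition \ref{DEsol} and realizes the assertion, made before the definition, that projecting $\partial_{E^{-1}} TV$ onto $D^{-1}$ recovers $\partial_{D^{-1}} TV$.

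It then remains to translate between the abstract gradient flow and Definition \ref{DEsol}. Given the abstract solution $u$, for a.e.\ $t$ there is an admissible field $Z(\cdot,t)$; a jointly measurable $Z \in L^\infty(\mathbb{R}^n \times (0,\infty))$ is produced by approximating the Bochner function $t \mapsto u_t(t)$ by functions piecewise constant in $t$ and using that the passage from a velocity to a Cahn--Hoffman field can be made measurably, as in \cite[Theorem 15, Lemma 16]{GKL}. Conversely, if $u$ solves \eqref{ETV4} in the sense of Definition \ref{DEsol} with field $Z$, then $-u_t = -\Delta \operatorname{div}(-Z)$ with $|-Z| \le 1$ forces $TV^0(-u_t) \le 1$, while $(u, -u_t)_{E^{-1}} = -\langle u, \operatorname{div} Z\rangle = TV(u)$; by Lemma \ref{LAlt}, $u_t \in -\partial_{E^{-1}} TV(u)$, so $u$ coincides with the abstract solution. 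Hence uniqueness within Definition \ref{DEsol} follows from monotonicity of the subdifferential, i.e.\ from the uniqueness part of Proposition \ref{PAb}.

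The main obstacle is the middle step: the Cahn--Hoffman characterization of $\partial_{E^{-1}} TV$, and above all the attainment of the minimizing vector field. This is delicate precisely for $n \le 2$, where $E^{-1}$ is not contained in $L^1_{loc}(\mathbb{R}^n)$ and the classical second-order arguments do not apply, so one must carry the approximation machinery of \cite{GKL} over to the extended space. By comparison, the measurable-in-time selection of $Z$ is a routine secondary point, and the density statement and the two-way translation are bookkeeping once the subdifferential is understood.
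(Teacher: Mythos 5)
Your overall architecture (abstract K\=omura--Brezis theory, Cahn--Hoffman characterization of the subdifferential, measurable-in-time selection) is the right shape, and for $n\geq3$, where $E^{-1}=D^{-1}$, your argument is essentially the intended one (cf.\ Theorem \ref{TExp}(ii)). But for $n\leq2$ --- exactly the case that forces the introduction of $E^{-1}$ --- the central move fails: the flow of Theorem \ref{TTV4} is \emph{not} the gradient flow of $TV$ in the Hilbert space $E^{-1}$, so applying Proposition \ref{PAb} with $H=E^{-1}$ produces the wrong evolution. The discussion preceding Definition \ref{DEsol} is explicit about this: the desired flow is the ``partial'' gradient flow $u_t\in-\partial_{D^{-1}}TV(u)$, where $\partial_{D^{-1}}TV(u)$ is the projection of $\partial_{E^{-1}}TV(u)$ onto $D^{-1}$, and the trajectory stays in the affine space $u_0+D^{-1}$ (this is precisely why there is no finite extinction for $n\leq2$ in Theorem \ref{TRad}). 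Your ``by-product'' claim that every element of $\partial_{E^{-1}}TV(u)$ has the form $-\Delta\operatorname{div}Z$ with $\operatorname{div}Z\in E_0^1$, hence lies in $D^{-1}$, is false: a subgradient in $E^{-1}$ must also control the variations $h=\pm t\psi$, and since $t\mapsto TV(u\pm t\psi)$ typically has strictly positive one-sided derivatives at $t=0$ (take $u=\mathbf{1}_{B_R}$ and $\psi$ smooth, supported in $B_R$, so that $D\mathbf{1}_{B_R}$ and $D\psi$ are mutually singular), $\partial_{E^{-1}}TV(u)$ contains a whole segment of elements with nonzero $\psi$-component. The set you write down characterizes the projected subdifferential $\partial_{D^{-1}}TV(u)$, not $\partial_{E^{-1}}TV(u)$; the minimal sections of the two sets differ in general, so the $E^{-1}$-flow need not conserve the $\psi$-component and need not satisfy Definition \ref{DEsol}, which forces $u_t\in D^{-1}$.

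The same confusion appears in your converse verification: for $u=w+c\psi$ with $u_t=-\Delta\operatorname{div}Z\in D^{-1}$ one has
\[
((u,-u_t))_{E^{-1}}=-\langle w,[\operatorname{div}Z]\rangle=-\langle u,\operatorname{div}Z\rangle+c\int_{\mathbb{R}^n}\psi\operatorname{div}Z\;dx,
\]
so the identity $((u,-u_t))_{E^{-1}}=TV(u)$ required by Lemma \ref{LAlt} in $H=E^{-1}$ is off by the cross term $c\int\psi\operatorname{div}Z\;dx$. The repair is to apply Proposition \ref{PAb} in $H=D^{-1}$ to the shifted functional $w\mapsto TV(u_0+w)$ (equivalently, to work with the partial subdifferential on the affine space $u_0+D^{-1}$ and show it is maximal monotone there), check that $0$ lies in the closure of its domain, and only then match this $D^{-1}$-flow with Definition \ref{DEsol} via the Cahn--Hoffman characterization of $\partial_{D^{-1}}TV(u)$ and the independence of $\psi$; this is the content of \cite[Theorem 2]{GKL}, which the paper cites rather than proves.
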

Although sometimes it is difficult to compare with original gradient flow, it is reasonable to define a solution using a Cahn--Hoffman vector field like Definition \ref{DEsol}.
 Here are a few examples of definitions.
\begin{definition} \label{DECH1}
Assume that $a:\Omega\to[0,\infty]$ is lower semicontinuous and that $b>0$ satisfies $b\in L^\infty(\Omega)$ and $1/b\in L^\infty(\Omega)$.
 We say that $u\in C\left((0,T),\mathcal{D}'(\Omega)\right)$ with $TV_a\left(u(t)\right)<\infty$ for all $t\in(0,T)$ is a Cahn--Hoffman solution (CH solution for short) of \eqref{ETV} if there exists a measurable function $Z$ on $\Omega\times(0,T)$ with $\operatorname{div}Z\in L_{loc}^2\left(\bar{\Omega}\times(0,T)\right)$ such that
\[
	\left\{
\begin{array}{l}
	bu_t = \operatorname{div}Z \\	
	\left|Z(x,t)\right| \leq a(x)\ \text{for a.e.}\ x \in \Omega,\ 
	[Z,\nu] = 0\ \text{a.e.}\ x \in \partial\Omega\ \text{for a.e.}\ t \in (0,T) \\
	-\left(u,\operatorname{div}Z\right)_{L^2} = TV_a(u)
\end{array}
	\right.
\]
for a.e.\ $t\in(0,T)$.
\end{definition}
We next consider
\begin{equation} \label{ETV4M}
	u_t = -\operatorname{div} \left(M\nabla\operatorname{div} \left(\nabla u/|\nabla u|\right)\right)
\end{equation}
where $M\in L^\infty(\Omega,\mathbb{R}^{n\times n})$ is a real symmetric matrix-valued function with $M\geq c_0 I$ uniformly with some $c_0>0$.
 The boundary condition we impose is
\begin{equation} \label{EBNN}
	\frac{\partial u}{\partial \nu} = 0, \quad
	M\cdot\nabla\operatorname{div} \left(\frac{\nabla u}{|\nabla u|}\right) = 0 \quad\text{on}\quad \partial\Omega.
\end{equation}
\begin{definition} \label{DECH2}
We say that $u\in C\left((0,T),\mathcal{D}'(\Omega)\right)$ with $TV\left(u(t)\right)<\infty$ for all $t\in(0,T)$ is a CH solution to \eqref{ETV4M} with \eqref{EBNN} if there exists $Z\in L^\infty\left(\Omega\times(0,\infty)\right)$ with  $\operatorname{div}Z \in L_{loc}^2\left(0,T,H_{loc}^1(\bar{\Omega})\right)$
\[
	\left\{
\begin{array}{l}
	u_t = -\operatorname{div}(M\nabla\operatorname{div}Z) \\	
	\left|Z(x,t)\right| \leq 1\ \text{for a.e.}\ x \in \Omega,\ 
	[Z\cdot\nu] = 0\ \text{for a.e.}\ x \in \partial\Omega \\
	\displaystyle - \langle u,\operatorname{div}Z \rangle = TV(u)
\end{array}
	\right.
\]
for a.e.\ $t>0$; here $\langle u,\operatorname{div}Z\rangle$ should be interpreted as some ``duality pair'' but for a low-dimensional problem $n\leq4$,
\[
	\langle u,\operatorname{div}Z\rangle = \int_\Omega u\operatorname{div}Z\; dx
\]
when $\Omega$ is bounded.
 In fact, in this case, $u\in L^{n/(n-1)}$ since $TV(u)<\infty$ and $\operatorname{div}Z\in L^{\frac{n}{n-2}}$ by the Sobolev embedding so $1-\frac1n+\frac12-\frac1n=\frac32-\frac2n\leq1$ for $n\leq4$.
 This implies $u\operatorname{div}Z\in L^1(\Omega)$ (for a.e.\ $t>0$).
\end{definition}

 In this definition, we do not assume that $u$ is in $L^2$.
 The problem like \eqref{ETV4M} with weight appears in relaxation process of a surface of a crystal; see e.g.\ a review paper by R.\ V.\ Kohn \cite{Kh}.

{\BLUE We mention that in the second-order case, existence and uniqueness of a solution defined in terms of a Cahn--Hoffman field can be obtained also for initial data in the non-hilbertian space $L^1(\Omega)$ larger than $L^2(\Omega)$, owing to \emph{complete $m$-accretivity} of the operator $\operatorname{div} \frac{\nabla u}{|\nabla u|}$ \cite{ABC1,ABC2}, see also \cite{ACM}. This does not seem to have an analogue in the higher-order cases. }

\section{Calibrability} \label{S3} 
{\BLUE The domain of $TV$, that is the space $BV(\Omega)$, contains functions with jump discontinuities, such as characteristic functions of sufficiently regular sets. In many cases, the evolutions of characteristic functions of sets under total variation flows are particularly simple. As a first step towards constructing such examples, we are interested in sets for which the speed $u_t(t)$ of the solution $u(t)$ is spatially constant on the set. This leads to a geometric notion of \emph{calibrability} of sets. }

In the setting of Proposition \ref{PAb}, a general theory implies that
\[
	u_t(t) = -\partial_H^0 \mathcal{E} \left(u(t)\right)
	\quad\text{for all}\quad t>0,
\]
where $\partial_H^0 \mathcal{E}$ denotes the minimal section (canonical restriction) of $\partial_H\mathcal{E}$, i.e.,
\[
	\partial_H^0 \mathcal{E}(u) := \arg\min \left\{ \|v\|_H \bigm|
	v \in \partial_H\mathcal{E}(u) \right\}.
\]
In other words, $\partial_H^0\mathcal{E}(u)$ is the
minimizer of $\|v\|_H$ over $\partial_H\mathcal{E}(u)$.
 Since $\partial_H\mathcal{E}(u)$ is a closed convex set, there always exists a unique minimizer so $\partial_H^0\mathcal{E}(u)$ is well defined.

To calculate the minimal section, we begin with a simple situation.
 Let $U$ be a smooth open set in $\Omega=\mathbb{R}^n$.
 We consider a Lipschitz function $u$ such that
\[
	\bar{U} = \left\{ x \in \Omega \bigm|
	u(x) = 0 \right\}
\]
and $u$ is smooth outside $\bar{U}$.
 Assume further that $\nabla u\neq0$ outside $\bar{U}$. 
\begin{figure}[htb]
\centering 
\includegraphics[width=8cm]{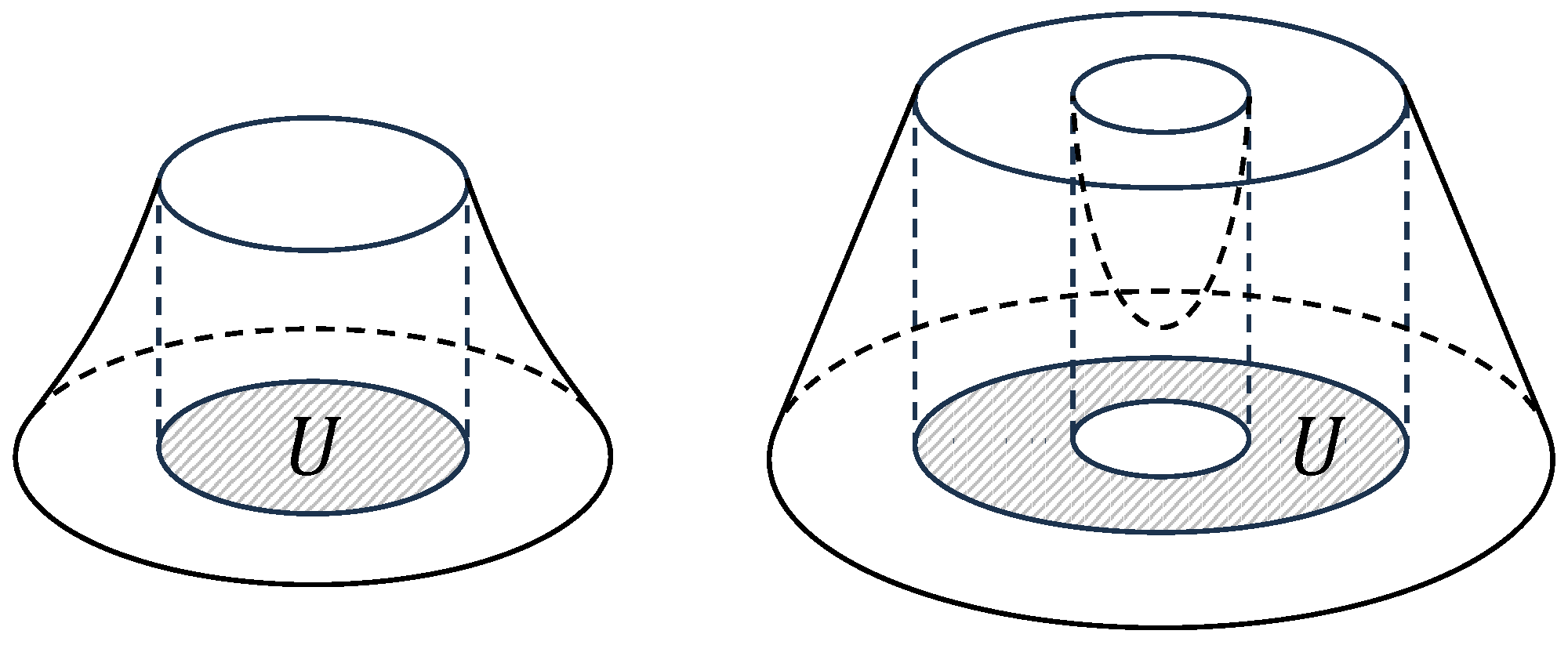}
\caption{Examples of $U$} \label{FSet}
\end{figure}

We begin with the second-order problem.
 In this case, by Theorem \ref{TSub} (i),
\[
	\partial_{L^2}^0 TV(u) = \arg\min \left\{ \|\operatorname{div}Z\|_{L^2} \bigm|
	|Z| \leq 1\ \text{a.e.\ in}\ U,\ 
	Z = \nabla u/|\nabla u|\ \text{in}\ \bar{U}^c,\ 
	Z \in X_2 \right\}. 
\]
Let $Z_0$ be a minimizer.
 Since the value of $Z_0$ outside $\bar{U}$ is always the same, it suffices to consider its restriction to $U$ (still denoted by $Z_0$).
 Then, $Z_0$ is a minimizer of
\[
	\left\{ \int_U |\operatorname{div}Z|^2\; dx \biggm|
	|Z| \leq 1\ \text{a.e.\ in}\ U,\ 
	[Z \cdot \nu] = \chi\ \text{on}\ \partial U,\ 
	Z \in X_2 \right\}. 
\]
Here, $\chi:\partial U\to\{-1,1\}$ is a signature function defined by
\[
	\chi(x) = \left\{
\begin{array}{cl}
	1 &\text{if}\ u>0\ \text{outside}\ \bar{U}\ \text{near}\ x\in\partial U \\	
	-1 &\text{otherwise}
\end{array}
	\right.
\]
For the fourth-order problem, by Theorem \ref{TSub} (i\hspace{-0.1em}i),
\[
	\partial_{D^{-1}}^0 TV(u) = \arg\min \left\{ \|\operatorname{div}Z\|_{D_0^1} \bigm|
	|Z| \leq 1\ \text{a.e.\ in}\ U,\ 
	Z = \nabla u/|\nabla u|\ \text{in}\ \bar{U}^c,\ 
	\operatorname{div}Z \in E_0^1 \right\}. 
\]
We argue in the same way.
 Let $Z_0$ be a minimizer of this problem, its restriction to $U$ (still denoted by $Z_0$) is a minimizer of
\[
	\left\{ \int_U |\nabla\operatorname{div}Z|^2\; dx \biggm|
	|Z| \leq 1\ \text{a.e.\ in}\ U,\ 
	[Z \cdot \nu] = \chi\ \text{on}\ \partial U,\ 
	\operatorname{div}Z = \chi\kappa \ \text{on}\ \partial U \right\},
\]
where $\kappa$ is the sum of all inward principal curvatures, i.e., $n-1$ times the mean curvature.
 This is because the inward and outward trace must agree for $ \operatorname{div}Z$ since $\operatorname{div}Z\in E_0^1$.
 We note that
\[
	\operatorname{div}Z = \operatorname{div} \left(\nabla u/|\nabla u|\right) = \chi \operatorname{div}\nu = \chi\kappa,
\]
where $\nu$ is an external unit normal.

We are interested in the case where $\operatorname{div}Z_0$ for the second-order problem and $\Delta\operatorname{div}Z_0$ for the fourth-order problem are constant on $U$.
 In other words, we are interested in the case that the speed of the corresponding flow on $U$ is constant.
 We rather consider a more general equation with weight for the second-order problem.
 For later convenience, we call any continuous function $\chi:\partial U\to\{-1,1\}$ a \emph{signature} of $U$ as in \cite{LMM} or \cite{GGP}.
 We first consider the second-order problem corresponding to $\partial_{L_b^2}^0 TV_a(u)$.
\begin{definition} \label{DC2ND}
Let $U$ be a smooth open set in $\mathbb{R}^n$ with signature  $\chi$.
 Assume that $a\geq0$ is continuous up to $\bar{U}$ and that $b\in L^\infty(U)$ with $1/b\in L^\infty(U)$ for $b>0$.
 We say that $U$ is $(a,b)$-\emph{calibrable} (with signature $\chi$) if there exists $Z_0$ satisfying the constraint
\[
	|Z_0| \leq 1 \quad\text{on}\quad U
\]
with boundary condition
\[
	[Z_0 \cdot \nu] = \chi \quad\text{on}\quad \partial U
\]
with the property that $\frac1b\operatorname{div}(aZ_0)$ is constant on $U$.
 We call any such $Z_0$ an $(a,b)$-calibration for $U$ (with signature $\chi$).
\end{definition}

Note that in the case $b\not\equiv1$, we still have the characterization of subdifferential $\partial_{L_b^2} TV_a(u)$ as in Theorem \ref{TSub} (i) with the modification that $v=-\left(\operatorname{div}(aZ)\right)/b$ instead of (ib) and $-\left(u,\operatorname{div}(aZ)\right)_{L^2}=TV_a(u)$ instead of (ic).
 When $b\equiv1$, the proof is given in \cite{Mol} (for general weighted anisotropic total variation), including the case when $a$ is discontinuous.
 However, as mentioned before, our definition of $TV_a$ for discontinuous $a$ is different from theirs.
 The extension to general $b$ is straightforward.
 In the one-dimensional case, this type of characterization is given in \cite{GGK} when $u$ is piecewise linear.

Our calibration $Z_0$ gives a minimizer of
\[
	\left\{ \int_U b \left|\frac{\operatorname{div}(aZ)}{b}\right|^2 dx \biggm|
	|Z| \leq 1 \ \text{a.e.\ in}\ U,\ 
	[Z \cdot \nu] = \chi\ \text{on}\ \partial U,\ 
	\operatorname{div}(aZ) \in L^2(U) \right\}.
\]
Indeed, we have by integration by parts and the Schwarz inequality
\[
	\int_{\partial U} a\chi\; d\mathcal{H}^{n-1}
	= \int_U \operatorname{div}(aZ)\; dx
	= \int_U \frac{\operatorname{div}(aZ)}{b^{1/2}} b^{1/2}\;dx
	\leq \left( \int_U \frac{\left(\operatorname{div}(aZ)\right)^2}{b}\; dx \right)^{1/2}
	\left( \int_U b\; dx \right)^{1/2}.
\]
For $Z=Z_0$, since $\operatorname{div}(aZ_0)/b$ is constant, we see $\operatorname{div}(aZ)/b^{1/2}$ and $b^{1/2}$ is parallel.
 In this case, the Schwarz inequality becomes equality.
 Thus,
\[
	\int_U b \left|\frac{\operatorname{div}(aZ)}{b} \right|^2\;dx
\]
attains its minimum value
\[
	\left( \int_{\partial U} a\chi\; d\mathcal{H}^{n-1} \right)^2 \biggm/
	\int_U b\; dx
\]
at $Z=Z_0$.

In the one-dimensional case, it is known that an interval (with signature $\chi$) is $(a,b)$-calibrable if and only if $a$ is concave with respect to a metric induced by $b$ \cite{GGK}.
 In particular, all intervals are $(a,b)$-calibrable for constant $a,b>0$.
 In higher-dimension case, the situation is more involved even for constant $a$ and $b$.
 Even if $U$ is convex, it is not necessarily $(1,1)$-calibrable.

There is a very related notion called a Cheeger set.
 We only discuss the case when $\chi\equiv1$.
 For a given open set, we set
\[
	h(U) := \inf \left\{ \frac{P(F)}{\mathcal{L}^n(F)} \biggm| F \subset U \text{ Borel,}\ 
	\mathcal{L}^n(F) \in (0,\infty) \right\},
\]
where $P(F)=TV(\mathbf{1}_F)$ is the perimeter of $F$ and $\mathcal{L}^n$ denotes the Lebesgue measure.
 Here $\mathbf{1}_F$ denotes the characteristic function of $F$, i.e., $\mathbf{1}_F(x)=1$ of $x\in F$ and $\mathbf{1}_F(x)=0$ if $x\not\in F$.
 This quantity $h(U)$ is called the \emph{Cheeger constant} and $P(F)/\mathcal{L}^n(F)$ is called the \emph{Cheeger ratio}.
 A set $F\subset U$ satisfying $P(F)/\mathcal{L}^n(F)=h(U)$ is a Cheeger set.
 If $U$ itself is a Cheeger set, $U$ is called self-Cheeger.
 It is interesting to find the value $h$ or characterize the Cheeger subset of $U$.
 Such problems are often called the Cheeger problem; see e.g.\ \cite{Leo}.

We consider the total variation flow \eqref{ETV} with $a\equiv b\equiv1$ and $\Omega=\mathbb{R}^n$ or $\mathbb{T}^n$.
 If $U$ with signature $\chi\equiv1$ is $(1,1)$-calibrable, then the speed of $u=1_U$ is equal to the Cheeger ratio {\BLUE on $U$} at least formally.
 Indeed, let $Z_0$ be a calibration.
 Then, by integration by parts, we see
\[
	P(U) = \int_{\partial U} \chi\; \mathcal{H}^{n-1} 
	= \int_U \operatorname{div}Z_0\; dx
	= \mathcal{L}^n(U) \cdot \operatorname{div}Z_0. 
\]
Thus, the speed
\[
	u_t = \operatorname{div} Z_0
\]
equals $P(U)/\mathcal{L}^n(U)$ on $U$.

It is not difficult to prove that if $U$ is $(1,1)$-calibrable, it is self-Cheeger; see e.g.\ \cite{GP}.
 The converse is also true at least in $n=2$.
 For more details, the reader is referred to \cite{ACC} or \cite{ACM} as well as \cite{GP}.
 For weighted case, the speed of $u=\mathbf{1}_U$ {\BLUE on $U$} should be the weighted Cheeger ratio
\[
	\int_{\partial U} a\chi d\mathcal{H}^{n-1} \biggm/
	\int_U b\; dx
\]
if $U$ {\BLUE with signature $\chi$ is $(a,b)$-calibrable. }

We next consider the fourth-order problem.
\begin{definition}[\cite{GKL}] \label{DC4TH}
Let $U$ be a smooth open set in $\mathbb{R}^n$ with signature $\chi$.
 We say that $U$ is ($D^{-1}$-)calibrable (with signature $\chi$) if there exists $Z_0$ satisfying the constraint
\[
	|Z_0| \leq 1 \quad\text{on}\quad U
\]
with boundary conditions
\[
	[Z_0\cdot\nu] = \chi,\quad
	\operatorname{div}Z_0 = \chi\kappa \quad\text{on}\quad \partial U,
\]
with the property that
\[
	\Delta \operatorname{div}Z_0 \quad\text{is constant over}\quad U.
\]
We call any such $Z_0$ a ($D^{-1}$-)calibration for $U$ (with signature $\chi$).
\end{definition}
If $U$ is bounded, the constant $\lambda=-\Delta\operatorname{div}Z_0$ can be characterized as a solution of the Saint-Venent problem (or the torsion problem):
\begin{equation} \label{ESVG}
	\left\{
\begin{array}{rlcl}
	-\Delta w& \hspace{-0.5em}= \lambda &\text{in} &U \\
	w & \hspace{-0.5em}= \chi\kappa &\text{on} &U
\end{array}
	\right.
\end{equation} 
by setting $w=\operatorname{div}Z_0$.
 The constant should be determined 
 by the other boundary condition:
\begin{equation} \label{ECST}
	\int_U w\; dx
	= \int_{\partial U} \chi\; d\mathcal{H}^{n-1}.
\end{equation}
In the second-order problem, the constant $\operatorname{div}(aZ_0)/b$ is determined by the Cheeger ratio.
 For the fourth-order problem, it is determined by using the solution of the Saint-Venant problem
\begin{equation} \label{ESV}
	\left\{
\begin{array}{rlcl}
	-\Delta w_\mathrm{sv}& \hspace{-0.5em}= 1 &\text{in} &U \\
	w_\mathrm{sv}& \hspace{-0.5em}= 0 &\text{on} &\partial U
\end{array}
	\right.
\end{equation}
\begin{prop}[\cite{GKL}] \label{PSp}
Let $U$ be a smooth bounded domain in $\mathbb{R}^n$.
 Assume that $Z$ is a calibration for $U$ with signature $\chi$.
 If
\[
	\lambda = -\Delta \operatorname{div} Z,
\]
then
\[
	\lambda = \left( \int_{\partial U} \chi\kappa\nu \cdot \nabla w_\mathrm{sv}\; d\mathcal{H}^{n-1}
	+ \int_{\partial U} \chi\; d\mathcal{H}^{n-1} \right) \biggm/
	\int_U w_\mathrm{sv}\; dx
\]
\end{prop}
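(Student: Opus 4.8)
The plan is to derive the value of $\lambda$ by combining the two boundary conditions in Definition \ref{DC4TH} with the defining equation $-\Delta\operatorname{div}Z = \lambda$, using the Saint-Venant function $w_{\mathrm{sv}}$ as a test function. Writing $w = \operatorname{div}Z$, we know from \eqref{ESVG} that $-\Delta w = \lambda$ in $U$ and $w = \chi\kappa$ on $\partial U$. First I would split $w$ as $w = \lambda w_{\mathrm{sv}} + h$, where $w_{\mathrm{sv}}$ solves \eqref{ESV} and $h$ is the harmonic extension of the boundary data, i.e.\ $-\Delta h = 0$ in $U$, $h = \chi\kappa$ on $\partial U$. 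This decomposition is legitimate by linearity and uniqueness for the Dirichlet problem on the smooth bounded domain $U$.

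Next I would exploit the second boundary condition \eqref{ECST}, namely $\int_U w\,dx = \int_{\partial U}\chi\,d\mathcal{H}^{n-1}$, which came from $[Z\cdot\nu]=\chi$ on $\partial U$ via the divergence theorem. Substituting the decomposition gives
\[
	\lambda \int_U w_{\mathrm{sv}}\,dx + \int_U h\,dx = \int_{\partial U}\chi\,d\mathcal{H}^{n-1},
\]
so it remains only to evaluate $\int_U h\,dx$ in terms of $w_{\mathrm{sv}}$ and the boundary data $\chi\kappa$. For this I would use Green's second identity on the pair $(h, w_{\mathrm{sv}})$: since $\Delta h = 0$ and $-\Delta w_{\mathrm{sv}} = 1$, we get $\int_U h\,dx = -\int_U h\,\Delta w_{\mathrm{sv}}\,dx = \int_U w_{\mathrm{sv}}\,\Delta h\,dx + \int_{\partial U}\bigl(h\,\partial_\nu w_{\mathrm{sv}} - w_{\mathrm{sv}}\,\partial_\nu h\bigr)\,d\mathcal{H}^{n-1}$, and both the volume term and the $w_{\mathrm{sv}}\,\partial_\nu h$ term vanish because $\Delta h = 0$ and $w_{\mathrm{sv}} = 0$ on $\partial U$. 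Hence $\int_U h\,dx = \int_{\partial U} h\,\partial_\nu w_{\mathrm{sv}}\,d\mathcal{H}^{n-1} = \int_{\partial U}\chi\kappa\,\nu\cdot\nabla w_{\mathrm{sv}}\,d\mathcal{H}^{n-1}$, using $h = \chi\kappa$ on $\partial U$. Plugging this back and solving for $\lambda$ yields exactly the stated formula.

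The main obstacle is not the algebra but the regularity and integrability bookkeeping needed to justify the integrations by parts: one must ensure that $w = \operatorname{div}Z \in E_0^1$ (hence $\nabla w \in L^2$) has well-defined traces and normal derivatives on $\partial U$, that the harmonic part $h$ is regular enough near the smooth boundary for $\partial_\nu w_{\mathrm{sv}}$ and $h$ to be paired, and that all boundary integrals converge; here smoothness of $U$ and standard elliptic regularity for \eqref{ESV} and for the harmonic extension make this routine but not entirely automatic. A secondary point to verify is that $\int_U w_{\mathrm{sv}}\,dx > 0$, so that division by it is legitimate — this follows from the strong maximum principle, since $w_{\mathrm{sv}} > 0$ in $U$.
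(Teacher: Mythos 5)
Your proposal follows exactly the paper's argument: the same decomposition $w=\lambda w_{\mathrm{sv}}+h$ into the Saint--Venant part and the harmonic extension of $\chi\kappa$, the same use of \eqref{ECST}, and the same evaluation of $\int_U h\,dx$ by testing against $w_{\mathrm{sv}}$. The only flaw is a sign slip in the Green's identity step: with $u=h$, $v=w_{\mathrm{sv}}$ one has $\int_U h\,\Delta w_{\mathrm{sv}}\,dx=\int_U w_{\mathrm{sv}}\,\Delta h\,dx+\int_{\partial U}\bigl(h\,\partial_\nu w_{\mathrm{sv}}-w_{\mathrm{sv}}\,\partial_\nu h\bigr)\,d\mathcal{H}^{n-1}$ with a plus sign on the left, whereas you wrote $-\int_U h\,\Delta w_{\mathrm{sv}}\,dx$ there; since $\int_U h\,dx=-\int_U h\,\Delta w_{\mathrm{sv}}\,dx$, the correct conclusion is $\int_U h\,dx=-\int_{\partial U}\chi\kappa\,\nu\cdot\nabla w_{\mathrm{sv}}\,d\mathcal{H}^{n-1}$, not $+$. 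With that sign, substituting into $\lambda\int_U w_{\mathrm{sv}}\,dx=\int_{\partial U}\chi\,d\mathcal{H}^{n-1}-\int_U h\,dx$ produces the stated formula; with your sign it would not, so the final claim that your value of $\int_U h$ ``yields exactly the stated formula'' is inconsistent with your own computation. This is a trivially correctable slip, not a gap in the method; your remarks on regularity and on $\int_U w_{\mathrm{sv}}\,dx>0$ match the paper's (the paper likewise invokes the maximum principle for the denominator).
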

\begin{proof}
We first note that $w_\mathrm{sv}>0$ by the maximum principle, so the denominator is not zero.
 We set $w=\operatorname{div}Z$ and decompose
\[
	w = \lambda w_\mathrm{sv} + h.
\]
Since $w$ solves \eqref{ESVG}, $h$ is a harmonic extension of $\chi\kappa$.
 The condition \eqref{ECST} gives
\[
	\chi \int_U w_\mathrm{sv}\; dx
	+ \int_U h\; dx
	= \int_{\partial U} \chi\; d\mathcal{H}^{n-1}.
\]
Thus
\[
	\lambda = \left( \int_{\partial U} \chi\; d\mathcal{H}^{n-1}
	- \int_U h\; dx \right) \biggm/
	\int_U w_\mathrm{sv}\; dx.
\]

It remains to prove
\[
	- \int_U h\; dx =	\int_{\partial U} \chi\kappa\nu\cdot\nabla w_\mathrm{sv}\; d\mathcal{H}^{u-1}.
\]
Indeed,
\begin{align*}
	- \int_U h\; dx
	= \int_U h\Delta w_\mathrm{sv}\; dx
	&= \int_{\partial U} \nu\cdot\nabla w_\mathrm{sv} \chi\operatorname{div}\nu\; d\mathcal{H}^{n-1}
	+ \int_U \nabla w_\mathrm{sv} \cdot \nabla h\; dx \\
	&= \int_{\partial U} \nu\cdot\nabla w_\mathrm{sv} \chi\kappa\; d\mathcal{H}^{n-1},
\end{align*}
since $\operatorname{div}\nu=\kappa$ and
\[
	\int_U \nabla w_\mathrm{sv}\cdot\nabla h\; dx
	= \int_{\partial U} w_\mathrm{sv} \nu\cdot\nabla h\; d\mathcal{H}^{n-1}
	- \int_U w_\mathrm{sv} \Delta h\; dx = 0-0
\]
by the definition of $w_\mathrm{sv}$ and $h$.
\end{proof}

As for the second-order problem, the calibration gives a minimizer of
\begin{equation} \label{EVar}
	\left\{ \int_U |\nabla\operatorname{div}Z|^2\;dx \biggm|
	|Z| \leq 1 \ \text{a.e.\ in}\ U,\ 
	[Z\cdot\nu] = \chi\ \text{on}\ \partial U,\ 
	\operatorname{div}Z = \chi\kappa\ \text{on}\ \partial U \right\}.
\end{equation} 
(Although a minimizer $Z^*$ is not unique, $\Delta\operatorname{div}Z^*$ is uniquely determined.)
\begin{thm}[\cite{GKL}] \label{TMin}
Let $U$ be a smooth bounded domain in $\mathbb{R}^n$.
 If $Z_0$ is a calibration of $U$ with signature $\chi$, then $Z_0$ is a minimizer of \eqref{EVar}.
\end{thm}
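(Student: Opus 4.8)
The plan is to show that the calibration $Z_0$ solves the constrained minimization problem \eqref{EVar} by a direct argument exploiting the fact that the Euler--Lagrange equation for the \emph{unconstrained} version of \eqref{EVar} (dropping $|Z|\le 1$) is a linear elliptic problem whose solution is precisely characterized by $-\Delta\operatorname{div}Z$ being constant. Concretely, first I would observe that among competitors the only thing that matters is $w := \operatorname{div}Z \in E_0^1$, and that the set of attainable $w$ is an affine subspace: $w$ ranges over functions in $H^1(U)$ with prescribed boundary trace $w = \chi\kappa$ on $\partial U$ and with $\int_U w\,dx = \int_{\partial U}\chi\,d\mathcal{H}^{n-1}$ (the latter coming from the normal-trace condition $[Z\cdot\nu]=\chi$ via integration by parts), subject additionally to the constraint that $w$ be realized as $\operatorname{div}Z$ for some $Z$ with $|Z|\le 1$. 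So \eqref{EVar} is to minimize the convex functional $w \mapsto \int_U |\nabla w|^2\,dx$ over a convex set $\mathcal{A}$ of such $w$, and it suffices to show $w_0 := \operatorname{div}Z_0$ is the minimizer over $\mathcal{A}$.

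The key step is the variational inequality: $w_0$ minimizes $\int_U|\nabla w|^2$ over the convex set $\mathcal{A}$ if and only if $\int_U \nabla w_0 \cdot \nabla(w - w_0)\,dx \ge 0$ for all $w \in \mathcal{A}$. Since every $w \in \mathcal{A}$ and $w_0$ share the same boundary trace $\chi\kappa$, the difference $\varphi := w - w_0$ lies in $H_0^1(U)$, so integration by parts gives $\int_U \nabla w_0 \cdot \nabla\varphi\,dx = -\int_U (\Delta w_0)\,\varphi\,dx$. But $\Delta w_0 = \Delta\operatorname{div}Z_0 = -\lambda$ is constant by the calibration property, so this equals $\lambda \int_U \varphi\,dx = \lambda(\int_U w\,dx - \int_U w_0\,dx)$. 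Both $w$ and $w_0$ satisfy the same integral constraint $\int_U \cdot\,dx = \int_{\partial U}\chi\,d\mathcal{H}^{n-1}$, hence $\int_U \varphi\,dx = 0$, so $\int_U \nabla w_0\cdot\nabla\varphi\,dx = 0 \ge 0$. This verifies the variational inequality (in fact with equality), so $w_0$ is the minimizer; since $\int_U|\nabla\operatorname{div}Z|^2\,dx$ depends on $Z$ only through $w=\operatorname{div}Z$, $Z_0$ minimizes \eqref{EVar}.

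I would then note that one must be slightly careful that the integral constraint $\int_U w\,dx = \int_{\partial U}\chi\,d\mathcal{H}^{n-1}$ really does hold for every admissible competitor. This is where the condition $\operatorname{div}Z \in E_0^1$ together with $[Z\cdot\nu]=\chi$ enters: the membership $\operatorname{div}Z_0 \in E_0^1(\mathbb{R}^n)$ forces the inner and outer traces of $\operatorname{div}Z$ on $\partial U$ to agree (as already remarked in the text before Definition \ref{DC2ND}), and integrating $\operatorname{div}Z$ over $U$ against the function identically $1$ and using the normal trace $[Z\cdot\nu]=\chi$ yields $\int_U \operatorname{div}Z\,dx = \int_{\partial U}\chi\,d\mathcal{H}^{n-1}$ for any competitor $Z$ in \eqref{EVar}; similarly for $Z_0$ this is exactly \eqref{ECST}. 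So the two competitors automatically have the same integral, which is what makes $\int_U\varphi\,dx=0$.

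The main obstacle — really the only subtlety — is the bookkeeping about function spaces and traces: making precise that the class of $w=\operatorname{div}Z$ arising in \eqref{EVar} is contained in the affine space $\{\chi\kappa \text{ on }\partial U\} \cap \{\int_U = \int_{\partial U}\chi\}$, so that $\varphi = w - w_0 \in H_0^1(U)$ with mean-zero-type condition, and justifying the integration by parts $\int_U \nabla w_0\cdot\nabla\varphi\,dx = -\int_U(\Delta w_0)\varphi\,dx$ for $w_0 \in H^1(U)$ with $\Delta w_0$ constant (hence in $L^2$) and $\varphi \in H_0^1(U)$ — which is standard. Once these are in place the argument is a one-line convexity/variational-inequality computation; no compactness or regularity theory beyond what is already assumed of a calibration is needed.
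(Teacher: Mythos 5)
Your proposal is correct. The paper itself does not reproduce a proof of Theorem \ref{TMin}; it only remarks that the argument is ``more involved but still not difficult'' and defers to \cite[Theorem 26]{GKL}, so the natural benchmark is the Cauchy--Schwarz equality argument the paper does spell out for the second-order problem. Your argument is exactly the right fourth-order counterpart of that: writing $w=\operatorname{div}Z$, $w_0=\operatorname{div}Z_0$, $\varphi=w-w_0$, you get the orthogonal (Pythagorean) decomposition
\[
\int_U |\nabla w|^2\,dx=\int_U|\nabla w_0|^2\,dx+2\int_U\nabla w_0\cdot\nabla\varphi\,dx+\int_U|\nabla\varphi|^2\,dx ,
\]
and the cross term vanishes because (a) $\varphi\in H^1_0(U)$ since $w$ and $w_0$ share the trace $\chi\kappa$, (b) $\Delta w_0=-\lambda$ is constant by the calibration property, so $\int_U\nabla w_0\cdot\nabla\varphi\,dx=\lambda\int_U\varphi\,dx$, and (c) $\int_U\varphi\,dx=0$ because every competitor satisfies $\int_U\operatorname{div}Z\,dx=\int_{\partial U}[Z\cdot\nu]\,d\mathcal{H}^{n-1}=\int_{\partial U}\chi\,d\mathcal{H}^{n-1}$ by the Gauss--Green formula for $X_2$ fields on a bounded smooth domain. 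All three ingredients are correctly identified and justified, and the pointwise constraint $|Z|\le1$ is indeed irrelevant beyond guaranteeing that $Z_0$ is itself admissible, since you establish minimality over the larger affine class of attainable $w$. The only minor bookkeeping you could add is the observation that $w_0\in H^1(U)$ with finite Dirichlet energy (immediate from elliptic regularity for \eqref{ESVG} with smooth data $\chi\kappa$ on a smooth domain), but this is implicit in the definition of a calibration and does not affect the argument.
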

Compared with the second-order problem, the proof is more involved but it is still not difficult \cite[Theorem 26]{GKL}.

We conclude this section by giving examples of radial calibrable sets.

\vspace{1em}\noindent
{\bf The second-order problem.}
 We conclude the case $a\equiv b\equiv1$.
\begin{thm} \label{TCA2}
\begin{enumerate}
\item[(i)] All balls are ($(1,1)$-)calibrable.
\item[(i\hspace{-0.1em}i)] All complement of the ball are calibrable and $\lambda=0$.
\item[(i\hspace{-0.1em}i\hspace{-0.1em}i)] All annuli are calibrable.
\end{enumerate}
\end{thm}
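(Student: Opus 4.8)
The plan is to explicitly construct, in each of the three cases, a calibration $Z_0$ in the sense of Definition~\ref{DC2ND} with $a\equiv b\equiv1$, exploiting radial symmetry so that everything reduces to a one-variable computation. For a ball $B_R$ with signature $\chi\equiv1$ the natural ansatz is $Z_0(x)=(x/R)$, i.e.\ the radial field that equals the outward unit normal $\nu$ on $\partial B_R$; then $|Z_0|=|x|/R\le1$ on $B_R$, the normal trace is $[Z_0\cdot\nu]=1=\chi$ on $\partial B_R$, and $\operatorname{div}Z_0=n/R$ is genuinely constant on $B_R$. This shows $B_R$ is $(1,1)$-calibrable with $\lambda=-n/R$, matching the Cheeger-ratio value $P(B_R)/\mathcal{L}^n(B_R)=n/R$, which settles (i). For the complement $\Omega=\mathbb{R}^n\setminus\overline{B_R}$, the relevant signature on $\partial\Omega$ points the other way; the construction parallels Theorem~\ref{TCA2}(ii) for the fourth-order problem. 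One seeks a radial $Z_0(x)=\phi(|x|)\,x/|x|$ on $|x|>R$ with $\operatorname{div}Z_0=\phi'+\tfrac{n-1}{r}\phi$ constant; to get speed $\lambda=0$ one solves $\phi'+\tfrac{n-1}{r}\phi=0$, i.e.\ $\phi(r)=cr^{-(n-1)}$, and chooses $c=R^{n-1}$ so that $|Z_0|=\phi(r)\le\phi(R)=1$ for $r\ge R$ and $[Z_0\cdot\nu]=-\phi(R)=-1$, which is the correct signature (the normal of $\partial U$ relative to the complement is $-x/|x|$). Hence the complement is calibrable with $\lambda=0$.

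The annulus $A=\{r_1<|x|<r_2\}$ is the substantive case. I would look for a radial field $Z_0(x)=\phi(r)\,x/r$, so that $\operatorname{div}Z_0=\phi'(r)+\tfrac{n-1}{r}\phi(r)=:\mu$ is required to be a constant on $(r_1,r_2)$. This ODE integrates to $\phi(r)=\tfrac{\mu}{n}r + c\,r^{-(n-1)}$ for a free constant $c$, so $Z_0$ is automatically in $X_2$. There are two boundary conditions: $[Z_0\cdot\nu]=\chi$ at $r=r_1$ and at $r=r_2$. For the outer sphere $\nu=x/r$ so $[Z_0\cdot\nu]=\phi(r_2)=\chi(r_2)$; for the inner sphere the outward normal of $A$ is $-x/r$, so $[Z_0\cdot\nu]=-\phi(r_1)=\chi(r_1)$. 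These are two linear equations in the two unknowns $\mu,c$, generically uniquely solvable; one then has to check the pointwise constraint $|\phi(r)|\le1$ on $[r_1,r_2]$. Since $\phi$ is, depending on signs, monotone or has at most one interior critical point, the maximum of $|\phi|$ is attained either at an endpoint — where $|\phi|=1$ by the boundary conditions — or at the single interior extremum; one checks by an elementary sign/monotonicity analysis of $\phi(r)=\tfrac{\mu}{n}r+cr^{-(n-1)}$ that this interior value cannot exceed $1$ (the combination of a linear increasing term and a decreasing power term stays below the chord joining its boundary values when those have the appropriate signs dictated by $\chi$).

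The main obstacle is precisely the verification of the constraint $|Z_0|\le1$ inside the annulus for all admissible signatures $\chi$: unlike the ball case it is not immediate, since $\phi$ need not be monotone, and for some sign patterns of $(\chi(r_1),\chi(r_2))$ one must rule out an interior overshoot. I expect this to reduce to a short convexity/concavity argument for the explicit function $r\mapsto \tfrac{\mu}{n}r+cr^{-(n-1)}$ — note $r^{-(n-1)}$ is convex and $r$ is affine, so $\phi$ is convex when $c>0$ and concave when $c<0$, and in either case a function that is convex (resp.\ concave) on $[r_1,r_2]$ with prescribed endpoint values bounded by $1$ in absolute value stays within $[-1,1]$ provided the endpoint values have the same sign, which is exactly the relevant subcase; the mixed-sign subcase is handled by noting $\phi$ then changes sign once and its extrema are controlled by the endpoint of larger modulus. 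Once $|Z_0|\le1$ is confirmed, Definition~\ref{DC2ND} is satisfied and the annulus is calibrable, with $\lambda=-\mu$ given explicitly by the solved linear system; if desired one records that this $\lambda$ agrees with the weighted-Cheeger-type ratio $\bigl(\int_{\partial A}\chi\,d\mathcal{H}^{n-1}\bigr)/\mathcal{L}^n(A)$ computed via the divergence theorem, consistent with the discussion preceding Definition~\ref{DC4TH}.
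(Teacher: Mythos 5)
Your construction is exactly the paper's: the radial ansatz $Z=z(r)x/r$ with $z(r)=\chi r/R$ for the ball, $z(r)=\chi R^{n-1}/r^{n-1}$ for the exterior (giving $\lambda=0$), and $z(r)=\lambda r/n - c r^{1-n}$ for the annulus with the two constants fixed by the normal-trace conditions, followed by a monotonicity/convexity check of $|z|\le 1$. One small caution for the annulus: your blanket claim that a convex function with same-sign endpoint values of modulus $\le 1$ stays in $[-1,1]$ is not true in general — in the indefinite-signature case one needs, as the paper indicates, that $c$ and $\lambda$ have opposite signs so both terms of $z$ have a fixed sign, which is what actually rules out an interior overshoot; with that fix the argument matches the paper's.
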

It is not difficult to find (radial) calibration $Z(x)=z(r)x/r$, $r=|x|$.
 In the case of an open ball $B_R$ of radius $R$ (centered at the origin), we take
\[
	z(r) = \frac{\chi r}{R}
\]
so that $\operatorname{div}Z=z'(r)+\frac{n-1}{r}z(r)=\chi n/R$ and $z(R)=\chi$, $|z|\leq1$ on $(0,R)$.
 For the complement of a ball $\overline{B_R}$, it suffices to take $z(r)=\chi R^{n-1}/r^{n-1}$ so that $\operatorname{div}Z=0$.
 For an annulus $A_{R_0}^{R_1}=B_{R_1}\backslash\overline{B_{R_0}}$, we have to find $z$ such that $z'+\frac{n-1}{r}{\RED z}=r^{1-n}(r^{n-1}z)^{{\RED '}}=\lambda$ satisfying $z(R_1)=\chi_1$, $z(R_0)=-\chi_0$ and $\left|z(r)\right|\leq1$ for $r\in(R_0,R_1)$ where $\chi_i=\left.\chi\right|_{|x|=R_i}$.
 Integrating the differential equation, we observe that $z(r)=r\lambda/n-c/r^{n-1}$ with some constant $c$.
 By an explicit calculation, we are able to take $z$ satisfying the boundary condition as well as the constraint $|z|\leq1$.
 For example, consider the case $\chi_0=\chi_1=1$.
 We may assume that $R_0=1$, $R_1>1$ by scaling.
 The boundary conditions read
\[
	\frac{\lambda}{r} - c = -1, \quad
	\frac{\lambda}{n} R_1 - \frac{c}{R_1^{n-1}} = 1
\]
so that $R_1(-1+c)-cR_1^{1-n}=1$.
 In particular, $c>0$.
 We know $\lambda>0$ since $\chi\equiv1$.
 Thus, $z(r)$ is monotone increasing with $z(R_0)=-1$, $z(R_1)=1$ so it must satisfy the constraint.
 The case of indefinite signature, e.g.\ $\chi_0=-1$, $\chi_1=1$ is more involved and $c$ must have a different sign from $\lambda$.

\vspace{1em}\noindent
{\bf The fourth-order problem.}
 We consider $D^{-1}$-calibrability
\begin{thm}[\cite{GKL}] \label{TCA4}
\begin{enumerate}
\item[(i)] All balls are calibrable for $n\geq1$.
\item[(i\hspace{-0.1em}i)] All complements of balls are calibrable and $\lambda=0$ except $n=2$.
\item[(i\hspace{-0.1em}i\hspace{-0.1em}i)] If $n=2$, all complement of balls are not calibrable.
\item[(i\hspace{-0.1em}v)] All annuli (with definite signature i.e., with $\chi\equiv1$ or $\chi\equiv-1$) are calibrable except $n=2$.
\item[(v)] For $n=2$, there is $Q_*>1$ such that an annulus $A_{R_0}^{R_1}$ (with definite signature) is calibrable if and only if $R_1/R_0\leq Q_*$.
\end{enumerate}
\end{thm}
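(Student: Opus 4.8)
The plan is to reduce calibrability of these radially symmetric sets to a one‑dimensional linear free‑boundary problem for the radial profile of the Cahn--Hoffman field, solve the ODE explicitly in each case, and isolate the one genuinely delicate point, the pointwise bound $|Z_0|\le1$. Given any calibration $Z_0$ of $U$ (a ball, a complement of a ball, or an annulus), its spherical average $\tilde Z(x)=\bar z(|x|)\,x/|x|$, with $\bar z(r)$ the mean of $Z_0(x)\cdot x/|x|$ over $\partial B_r$, is again a calibration: $|\tilde Z|\le1$ by Jensen's inequality; $[\tilde Z\cdot\nu]$ and $\operatorname{div}\tilde Z$ on $\partial U$ are the spherical means of $[Z_0\cdot\nu]=\chi$ and $\operatorname{div}Z_0=\chi\kappa$, hence unchanged since $\chi,\kappa$ are constant on each bounding sphere; and $\Delta\operatorname{div}\tilde Z$ is the spherical mean of the constant $\Delta\operatorname{div}Z_0$, hence the same constant. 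So $U$ is calibrable iff it admits a radial calibration $Z_0(x)=z(r)x/r$. Writing $w:=\operatorname{div}Z_0=r^{1-n}(r^{n-1}z)'$ and $-\lambda:=\Delta\operatorname{div}Z_0$, Definition \ref{DC4TH} becomes: $-\bigl(w''+\tfrac{n-1}{r}w'\bigr)=\lambda$ on the radial interval; $w=\chi\kappa$ and $z=\pm\chi$ (the sign dictated by the orientation of the outward normal) at each bounding sphere; and $|z|\le1$ throughout. The general solution is
\[
w(r)=A+Br^{2-n}-\frac{\lambda r^2}{2n}\ \ (n\ne2),\qquad w(r)=A+B\log r-\frac{\lambda r^2}{4}\ \ (n=2),
\]
and then $z(r)=r^{1-n}\int^{r}s^{n-1}w(s)\,ds$, with one more constant $Cr^{1-n}$ available when $U$ is an annulus.

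For $B_R$, regularity at the origin forces $B=0$ and kills $Cr^{1-n}$; the conditions $w(R)=\chi(n-1)/R$ and $z(R)=\chi$ then pin down $A,\lambda$, and one obtains the closed form $z(r)=\tfrac{\chi}{2}\bigl(3r/R-r^{3}/R^{3}\bigr)$, monotone from $0$ to $\chi$, so $|z|\le1$; hence every ball is calibrable, with $\lambda=\chi\,n(n+2)/R^{3}$, giving (i). For the complement of $\overline{B_R}$ the membership $\operatorname{div}Z_0\in E_0^1$ (i.e.\ $\nabla w\in L^2$ of the exterior) forces $\lambda=0$; for $n\ge3$ it leaves $w=Br^{2-n}$, from which $z$ is computed explicitly, satisfies $|z|\le1$, and gives $\Delta\operatorname{div}Z_0\equiv0$ — this is (ii). For $n=2$ the same $L^2$ requirement additionally forces $B=0$, so $w\equiv A=-\chi/R\ne0$ and $z(r)\sim Ar/2$ is unbounded, violating $|z|\le1$; thus no calibration exists, which is (iii).

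For an annulus $U=A_{R_0}^{R_1}$ no constraint at $0$ or at $\infty$ applies, so all four constants $A,B,C,\lambda$ survive and are uniquely determined by the four conditions $z(R_0)=-1$, $z(R_1)=1$, $w(R_0)=\chi\kappa_0$, $w(R_1)=\chi\kappa_1$ (taking, without loss of generality, $\chi\equiv1$). For $n\ne2$ — in particular $n=1$, where $\kappa\equiv0$ and the computation is simplest — one checks that the resulting $z$ attains no interior value exceeding its boundary values, e.g.\ by examining the sign of $z'=w-\tfrac{n-1}{r}z$ together with the fact that $w$ keeps a sign; hence $|z|\le1$ and $U$ is calibrable, giving (iv). For $n=2$ the profile picks up the term proportional to $B\bigl(\tfrac12 r\log r-\tfrac14 r\bigr)$, which forces $z$ outside $[-1,1]$ once the annulus is fat: by scaling, $M(Q):=\max_{[R_0,R_1]}|z|$ depends only on $Q:=R_1/R_0$, with $M(1)=1$ and $M(Q)\to\infty$ as $Q\to\infty$; showing $M$ is continuous and strictly increasing then produces a unique $Q_*>1$ with $M(Q_*)=1$, so $A_{R_0}^{R_1}$ with definite signature is calibrable for $Q\le Q_*$ (the constructed $z$ is admissible) and, by the reduction above, not calibrable for $Q>Q_*$. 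This is (v).

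The expected main obstacle is the inequality $|z|\le1$. Setting up and solving the ODEs and identifying $\lambda$ is routine, and for balls and complements the bound is a one‑line monotonicity check. For the planar annulus, however, one must understand $M(Q)$ near the threshold — in particular establish the \emph{strict} monotonicity of $M$ in $Q$, not merely its values at $Q=1$ and $Q=\infty$ — since this is what guarantees a single critical ratio $Q_*$; this is the technical heart of part (v).
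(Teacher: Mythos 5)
Your overall strategy---angular averaging to reduce to a radial profile, explicit integration of the resulting linear ODE, and verification of the constraint $|z|\leq 1$---is exactly the one the paper sketches (the averaging step is \cite[Lemma 31]{GKL}, and your formulas for the ball and the exterior of the ball reproduce $z^{\mathrm{in}}$, $z^{\mathrm{out}}$ and $\lambda=-n(n+2)/R^3$ from Sections \ref{S3}--\ref{S4}). Parts (i)--(iii) are handled correctly, including the observation that $\nabla\operatorname{div}Z_0\in L^2$ of the exterior kills $\lambda$ and, in $n=2$, also the $\log$ mode, leaving the non-admissible linear growth $z\sim Ar/2$.

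There is, however, a genuine gap in (iv). Your proposed route to $|z|\leq 1$ on an annulus rests on ``the fact that $w$ keeps a sign,'' but for a definite signature $\chi\equiv 1$ and $n\geq 2$ the boundary data of $w=\operatorname{div}Z_0$ are $w(R_0)=\chi\kappa|_{r=R_0}=-(n-1)/R_0<0$ at the inner sphere and $w(R_1)=(n-1)/R_1>0$ at the outer one (the external normal of $A_{R_0}^{R_1}$ points toward the origin on the inner component, so $\kappa=\operatorname{div}\nu$ has opposite signs on the two components). Hence $w$ necessarily changes sign, and the monotonicity argument via $z'=w-\frac{n-1}{r}z$ does not go through as stated; this heuristic is valid only in $n=1$, where $\kappa\equiv 0$ and $w$ is a one-signed parabola. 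Since $z(R_0)=-1$, $z(R_1)=1$ with $z'(R_0)=z'(R_1)=0$, what must actually be shown is that the unique solution of the $4\times 4$ linear system has no interior extremum outside $[-1,1]$, and that requires an explicit analysis of the coefficients $A,B,C,\lambda$ as functions of $Q=R_1/R_0$ --- precisely the computation you defer with ``one checks.'' The same issue propagates into (v): you correctly identify that everything hinges on the behavior of $M(Q)=\max|z|$, but continuity, the limit $M(Q)\to\infty$, and above all the strict monotonicity of $M$ (without which the admissible set $\{Q: M(Q)\leq 1\}$ need not be an interval and the ``if and only if $R_1/R_0\leq Q_*$'' statement fails) are all asserted rather than proved; also $M(1)=1$ only makes sense as a limit for the degenerate annulus. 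So the skeleton is right and coincides with the paper's, but the constraint verification for annuli---which is the actual content of (iv) and (v)---is missing, and the one concrete mechanism you offer for it is based on a false premise.
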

Note that by taking angular averaging, we see that a $D^{-1}$-calibration exists if and only if a radial $D^{-1}$-calibration exists for radially symmetric set \cite[Lemma 31]{GKL}.
 So to assert non-calibrability, it suffices to prove non-existence of radial calibration.
 Compared with the second-order case, we see that there occurs several exceptional phenomena for two-dimensional setting.

Instead if giving a full proof, we just give a strategy of the proof by studying the case of a ball $B_R$.
 The equation $-\Delta\operatorname{div}Z=\lambda$ is a third-order differential equation of the form
\begin{equation} \label{E3RD}
	-r^{1-n} \left(r^{n-1} \left(r^{1-n} (r^{n-1}z)' \right)' \right)' = \lambda
\end{equation}
for a radial vector field $Z(x)=z(r){\RED x}/r$ since $\operatorname{div}Z=r^{1-n}(r^{n-1}z)'$.
 For $B_R$ with $\chi\equiv-1$, the boundary conditions are
\begin{equation} \label{EBR}
	z(R) = -1, \quad x'(R) = 0
\end{equation}
since $\operatorname{div}Z=\kappa\chi$ is equivalent to saying that $z'(R)+(n-1)z(R)/R=(-1)(n-1)/R$.
 A general solution of \eqref{E3RD} is of the form
\begin{align*}
	&z(r) = c_0 r^3 + c_1 r^{3-n} + c_2 r + c_3 r^{1-n}, \quad
	c_0 = -\frac{\lambda}{{\RED 2n}(n+2)}, \quad n \neq 2, \\
	&z(r) = c_0 r^3 + c_1 r \operatorname{log}r + c_2 r + c_3 r^{-1}, \quad
	c_0 = -\lambda/16, \quad n = 2.
\end{align*}
We have to find a solution satisfying \eqref{EBR} together with the constraint $|z|\leq1$.
 The right choice is
\[
	z(r) = \frac12 \left( \frac{r}{R} \right)^3 - \frac32 \frac{r}{R}, \quad
	\lambda = -\frac{n(n+{\RED 2})}{R^3}.
\]
(The possibly singular term should be neglected so that we take $c_1=c_3=0$.
 We determine $c_2$, $\lambda$ by \eqref{EBR}.)

\section{Some simple explicit solutions} \label{S4} 

As a simple example, we consider a solution starting from $u_0=a_0\mathbf{1}_{B_{R_0}}$.

For the second-order problem \eqref{ETV} with $a\equiv b\equiv1$ or \eqref{E1TV}, the answer is very simple since $B_{R_0}$ and $\mathbb{R}^n\backslash\overline{B_{R_0}}$ are calibrable according to Theorem \ref{TCA2}.
 Let $Z_0^\mathrm{in}$, $Z_0^\mathrm{out}$ be calibrations of $B_{R_0}$ and $\mathbb{R}^n\backslash\overline{B_{R_0}}$ {\BLUE with signatures $-\operatorname{sgn}a_0$, $\operatorname{sgn}a_0$ respectively}.
 Since the normal trace of $Z_0^\mathrm{in}$, $Z_0^\mathrm{out}$ at $\partial B_{R_0}$ is continuous, there is no delta part of $\operatorname{div}Z$, where $Z=Z_0^\mathrm{in}$ in $B_{R_0}$ and $Z=Z_0^\mathrm{out}$ in $\mathbb{R}^n\backslash\overline{B_{R_0}}$.
 By definition, the speed $u_t=\operatorname{div}Z$ is constant on $B_{R_0}$ and its outside, so it is rather clear that the solution to \eqref{E1TV} starting from $u_0$ is of the form
\[
	u(x,t) = (\operatorname{sgn}a_0) \left(|a_0| - \frac{n}{R_0}t \right)_+ \mathbf{1}_{B_{{\RED R_0}}}, \quad
	c_+ = \operatorname{max}(c,0).
\]
The number $n/R_0$ is the Cheeger ratio of $B_R$ in $\mathbb{R}^n$.
 Note that the speed $\operatorname{div}Z$ outside $B_{R_0}$ equals zero.

For the fourth-order problem, \eqref{ETV4} the answer is more involved.
 Different from the second-order problem, for the minimal Cahn--Hoffman vector field $Z_0$, $\nabla\operatorname{div}Z_0$ may have jump discontinuity on $\partial B_0$ so the velocity $-\Delta\operatorname{div}Z_0$ may have a non-zero singular part concentrated on $\partial B_0$.
 In one-dimensional periodic setting, this phenomenon is already observed in \cite{GG}, \cite{Ka1}, \cite{Ka2}.
 As a consequence, even if the set $K$ of $\mathbf{1}_K$ is calibrable together with its complement, it may expand or shrink during evolution.

We shall discuss the case $n\neq2$.
 We consider $B_{R(t)}$ whose radius $R(t)$ may depend on time.
 Since $B_{R(t)}$ and its complement are calibrable by Theorem \ref{TCA4}, we take radial calibration $Z_0^\mathrm{in}$ in $B_{R(t)}$ and $Z_0^\mathrm{out}$ in $\mathbb{R}^n\backslash B_{R(t)}$.
 We set
\begin{equation*}
	Z(x,t) = \left\{
\begin{array}{ll}
	Z_0^\mathrm{in}(x), &x \in B_{R(t)} \\
	Z_0^\mathrm{out}(x), &\mathbb{R}^n \backslash B_{R(t)}.
\end{array}
	\right.
\end{equation*}
As we already observed,
\[
	Z_0^\mathrm{in}(x) = z^\mathrm{in}(r) x/r, \quad
	z^\mathrm{in}(r) = \frac12 \left(\frac{r}{R}\right)^3 - \frac32 \frac{r}{R}.
\]
Similarly,
\[
	Z_0^\mathrm{out}(x) = z^\mathrm{out}(r) x/r, \quad
	z^\mathrm{out}(r) = -\frac{n-1}{2} \left(\frac{r}{R}\right)^{{\RED 3-n}} + \frac{n-3}{2} \left(\frac{r}{R}\right)^{1-n}.
\]
This vector field $Z$ fulfills all requirements in Definition \ref{DEsol}.
 Although $\operatorname{div}Z$ is continuous across $\partial B_{R(t)}$, $\nabla\operatorname{div}Z$ may jump across $\partial B_{R(t)}$.
 Thus,
\[
	-\Delta\operatorname{div}Z = \lambda\mathbf{1}_{B_{R(t)}}
	+ \nu \cdot \left( \nabla \operatorname{div}Z_0^\mathrm{in} - \nabla \operatorname{div}Z_0^\mathrm{out} \right) \delta_{\partial B\left(R(t)\right)}
\]
with $\lambda=-n(n+2)/R^3$, where $\nu$ is the exterior unit normal of $B_{R(t)}$.
 A direct calculation shows that
\[
	\nu \cdot \left( \nabla \operatorname{div}Z_0^\mathrm{in} - \nabla \operatorname{div}Z_0^\mathrm{out} \right)
	= -\frac{n(n-4)}{R^2}.
\]
If we set
\[
	u(x,t) = a(t) \mathbf{1}_{B_{R(t)}},
\]
then $\partial_t u=\frac{da}{dt}\mathbf{1}_{B_{R(t)}} + a\frac{dR}{dt} \delta_{\partial B_{R(t)}}$.
 Since $u_t=-\Delta\operatorname{div}Z$, we end up with
\[
	\frac{da}{dt} = -\frac{n(n+2)}{R^3}, \quad
	\frac{dR}{dt} = -\frac{n(n-4)}{aR^2}.
\]
This is easy to solve.
 Indeed,
\[
	\frac{d}{dt}(aR^3) = -n(n+2) - 3n(n-4) = -n(4n-10). 
\]
Thus an explicit solution is given as
\[
	a(t) = a_0 \left( 1-\frac{n(4n-10)}{a_0 R_0^3}t \right)^{\frac{n+2}{4n-10}}, \quad
	R(t) = R_0 \left( 1-\frac{n(4n-10)}{a_0 R_0^3}t \right)^{\frac{n-4}{4n-10}}
\]
if we start with $u_0=a_0 \mathbf{1}_{B_{R_0}}$.

The case $n=2$ is more complicated.
 It was nontrivial to define a solution; see Definition \ref{DEsol}.
 Moreover, the complement of a disk is not calibrable.
 We expect that the solution becomes radially strictly decreasing for $t>0$ outside a ball $B_{R(t)}$.
 If $u$ is radially strictly decreasing outside $B_{R(t)}$, the minimal Cahn--Hoffman vector field must be $Z^\mathrm{out}(x)=-\nabla u/|\nabla u|=-x/|x|$ for $|x|>R(t)$.
 The solution to \eqref{ETV4} must satisfy
\[
	u_t = -\Delta \operatorname{div}Z^\mathrm{out}
\]
provided that $\nabla\operatorname{div}Z_\mathrm{out}\in L^2\left(\left(\overline{B_{R(t)}}\right)^c\right)$ with $\left(\overline{B_{R(t)}}\right)^c=\mathbb{R}^n\backslash\overline{B_{R(t)}}$ as in Definition \ref{DEsol}.
 We observe that
\[
	\operatorname{div}Z^\mathrm{out} = -(n-1)/|x|^2 \quad\text{and}\quad
	\nabla \operatorname{div}Z^\mathrm{out} = (n-1)x/|x|^3.
\]
If $n\leq3$, $\nabla\operatorname{div}Z^\mathrm{out}\in L^2\left(\left(\overline{B_{R(t)}}\right)^c\right)$ so it must agree with the (minimal) Cahn--Hoffman vector field for $|x|>R(t)$.
 (For $n\geq4$, $\nabla\operatorname{div}Z^\mathrm{out}$ is not in $L^2\left(\overline{B_{R(t)}}^c\right)$, so it is not a Cahn--Hoffman field.
 This indicates that if $u_0$ is radially strictly decreasing, then $u_0\not\in D(\partial_{D^{-1}}TV)$ for $n\geq4$.)
 The speed is formally equal to
\[
	u_t(x,t) = \frac{(n-1)(n-3)}{|x|^3} \quad
	x \in \mathbb{R}^n \backslash \overline{B_{R(t)}}.
\]
For $n=1$ and $3$, $u_t=0$ so the part $\overline{B_{R(t)}}^c$ cannot move.
 This is consistent that the complement of the ball is calibrable for $n=1$ and $n=3$.
 For $n=2$, the expected form of the solution of \eqref{ETV4} is
\begin{equation} \label{ESol2D}
	u(x,t) = a(t) \mathbf{1}_{B_{R(t)}} (x) + \frac{t}{|x|^3} \mathbf{1}_{\overline{B_{R(t)}}^c}(x),
\end{equation}
where
\begin{equation} \label{EODE2D}
	\frac{da}{dt} = -\frac{2\cdot4}{R^3}, \quad
	\left(a(t) - \frac{t}{R(t)^3} \right) \frac{dR}{dt}
	= \frac{2\cdot2}{R(t)^2}.
\end{equation}
As in the case for $n\geq3$, this system of ODEs provides several qualitative properties of the solution.
 Let us summarize what we observe.
\begin{thm}[\cite{GKL}] \label{TRad}
Assume that the initial datum $u_0$ is of the form
\[
	u_0 = a_0 \mathbf{1}_{B_{R_0}} \quad\text{with}\quad a_0 > 0.
\]
If $n\geq3$, then the solution $u$ to \eqref{ETV4} with initial datum $u_0$ of the form
\[
	u(x,t) = a(t) \mathbf{1}_{B_R(t)} \quad\text{for}\quad
	t < t_* = \frac{a_0 R_0^3}{n(4n-10)}
\]
and $u(x,t)\equiv0$ for $t\geq t_*$.
 Moreover, $a(t)$ is strictly decreasing and $a(t)\downarrow0$ as $t\uparrow t_*$.
 The time $t_*$ is called the extinction time.
\begin{enumerate}
\item[(i)] $R(t)$ is strictly increasing and $R(t)\uparrow\infty$ as $t\uparrow t_*$ for $n=3$.\item[(i\hspace{-0.1em}i)] $R(t)\equiv R_0$ for $n=4$.
\item[(i\hspace{-0.1em}i\hspace{-0.1em}i)] $R(t)$ is strictly decreasing and $R(t)\downarrow0$ as $t\uparrow t_*$ for $n\geq5$.
\end{enumerate}

If $n=2$, the solution is of the form \eqref{ESol2D}.
 The functions $a$ and $R$ satisfy \eqref{EODE2D}.
 In particular, there is no extinction time and $R(t)$ is strictly increasing and $a(t)$ strictly decreasing.
 Moreover, $R(t)\uparrow\infty$ and $a(t)\downarrow0$ as $t\to\infty$.
 The gap $a(t)-t/R(t)^3$ is always positive.
 See Figure \ref{F2D}.
\begin{figure}[htb]
\centering 
\includegraphics[width=7cm]{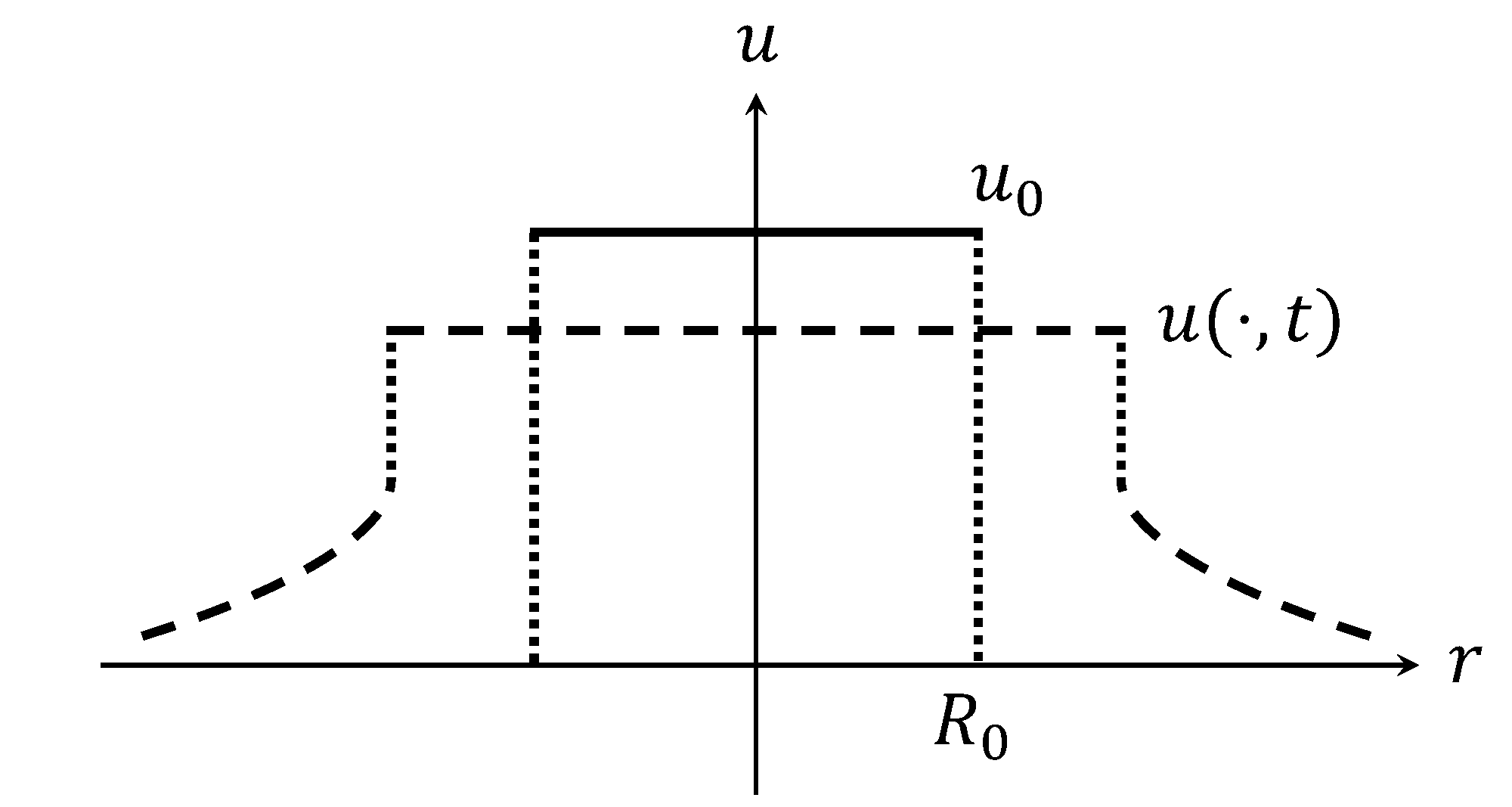}
\caption{Profile of a solution when $n=2$} \label{F2D}
\end{figure}

If $n=1$, then the solution is of the form $u(x,t)=a(t)\mathbf{1}_{B_{R(t)}}$ for $t>0$.
 There is no extinction time.
 Moreover, $R(t)$ is strictly increasing and $a(t)$ strictly decreasing with $R(t)\uparrow\infty$, $a(t)\downarrow0$ as $t\to\infty$.
\end{thm}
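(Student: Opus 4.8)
The plan is, for each dimension $n$, to exhibit an explicit Cahn--Hoffman field $Z(\cdot,t)$ realizing the claimed ansatz and then to appeal to the uniqueness statement of Theorem \ref{TTV4}: once the constructed pair $(u,Z)$ is checked against Definition \ref{DEsol}, it \emph{is} the solution, and all the qualitative statements follow from analyzing the ODE system that the matching forces. In every case $Z$ will be the radial calibration $Z_0^{\mathrm{in}}$ of $B_{R(t)}$ with signature $\chi\equiv -\operatorname{sgn}a_0=-1$ inside $B_{R(t)}$, glued across $\partial B_{R(t)}$ to a suitable field on the complement. The membership $u_0=a_0\mathbf 1_{B_{R_0}}\in E^{-1}$ (needed for $n\le 2$, where $E^{-1}$ properly extends $D^{-1}$) holds because $\mathbf 1_{B_{R_0}}$ is a compactly supported $L^2$ function; the energy--pairing identity $\langle u,\operatorname{div}Z\rangle=-TV(u)$ will be immediate from Gauss--Green since $u(t)$ is a multiple of $\mathbf 1_{B_{R(t)}}$ and $Z\cdot\nu=\chi=-1$ on $\partial B_{R(t)}$.

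\textbf{The case $n\ne 2$.} Here I would take on the complement the radial calibration $Z_0^{\mathrm{out}}$ of $\mathbb{R}^n\setminus\overline{B_{R(t)}}$ provided by Theorem \ref{TCA4}, with the explicit profiles $z^{\mathrm{in}}(r)=\tfrac12(r/R)^3-\tfrac32(r/R)$ and the $z^{\mathrm{out}}$ displayed in Section \ref{S4}. Since $z^{\mathrm{in}}(R)=z^{\mathrm{out}}(R)=-1$ and $\operatorname{div}Z_0^{\mathrm{in}}=\operatorname{div}Z_0^{\mathrm{out}}=\chi\kappa=-(n-1)/R$ on $\partial B_{R(t)}$, the glued $\operatorname{div}Z$ is continuous, so it lies in $E_0^1$ once the decay $\nabla\operatorname{div}Z_0^{\mathrm{out}}=O(|x|^{1-n})\in L^2$ away from the origin is checked (valid for $n\ge 3$, and trivial for $n=1$ where $z^{\mathrm{out}}\equiv-1$); monotonicity of $z^{\mathrm{out}}$ gives $|z^{\mathrm{out}}|\le 1$. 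A direct computation then yields $-\Delta\operatorname{div}Z=\lambda\mathbf 1_{B_{R(t)}}+\bigl(\partial_\nu\operatorname{div}Z|_{\mathrm{in}}-\partial_\nu\operatorname{div}Z|_{\mathrm{out}}\bigr)\delta_{\partial B_{R(t)}}$ with $\lambda=-n(n+2)/R^3$ and delta coefficient $-n(n-4)/R^2$. Writing $u=a(t)\mathbf 1_{B_{R(t)}}$, so that $u_t=a'\mathbf 1_{B_{R(t)}}+aR'\delta_{\partial B_{R(t)}}$, and matching bulk and delta parts gives $a'=-n(n+2)/R^3$, $aR'=-n(n-4)/R^2$; from $\tfrac{d}{dt}(aR^3)=-n(4n-10)$ one integrates in closed form, reads off $t_*=a_0R_0^3/\bigl(n(4n-10)\bigr)$ (the factor being negative for $n=1$, so the solution is global) and the exponents $\tfrac{n+2}{4n-10}$, $\tfrac{n-4}{4n-10}$, whence $a\downarrow 0$ always, while the sign of $n-4$ separates $R\uparrow\infty$ ($n=3$), $R\equiv R_0$ ($n=4$), $R\downarrow 0$ ($n\ge 5$); the explicit formula also shows $a(t)>0$ on the maximal interval, and past $t_*$ one has $u\equiv 0$ since $0$ is a stationary point.

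\textbf{The case $n=2$.} Now the complement of a disk is \emph{not} calibrable (Theorem \ref{TCA4}(iii)), so no constant-tail ansatz can work, and I would instead seek a solution strictly radially decreasing outside $B_{R(t)}$; there the minimal Cahn--Hoffman field is forced to be $Z^{\mathrm{out}}=-x/|x|$, for which $\operatorname{div}Z^{\mathrm{out}}=-1/|x|$ matches the interior trace $-1/R$, $\nabla\operatorname{div}Z^{\mathrm{out}}=x/|x|^3\in L^2(\{|x|>R\})$ (exactly the borderline dimension), and $-\Delta\operatorname{div}Z^{\mathrm{out}}=|x|^{-3}$, so integrating in time with zero initial tail gives the profile $t/|x|^3$, i.e.\ the form \eqref{ESol2D}. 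Matching the interior bulk gives $a'=-8/R^3$, and matching the surface delta --- equivalently, imposing the conservation $\tfrac{d}{dt}\int u\,dx=0$ that $u_t\in D^{-1}$ forces in two dimensions --- produces the system \eqref{EODE2D}, with conserved quantity $aR^2+2t/R\equiv a_0R_0^2$. Since this system has no elementary closed-form solution, the remaining work is purely qualitative: show it has a solution on all of $(0,\infty)$, that the gap $a(t)-t/R(t)^3$ stays strictly positive (so the jump never degenerates, keeping the ansatz and $|Z|\le1$ valid), that $a$ is strictly decreasing and $R$ strictly increasing, and finally the asymptotics $R(t)\uparrow\infty$, $a(t)\downarrow 0$ --- all of which I would extract from the conservation identity together with the two differential relations. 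The case $n=1$ is subsumed in the $n\ne 2$ analysis above.

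\textbf{Main obstacle.} Two points carry the real weight. First, in every dimension one must verify with care that the glued field genuinely meets Definition \ref{DEsol}: continuity of $\operatorname{div}Z$ across the moving sphere, the $L^2$-gradient membership (which is precisely what imposes the dimensional restrictions and the special behavior at $n=2$ and $n=4$), joint measurability in $(x,t)$, continuity of $u$ into $E^{-1}$, and the pairing identity. Second, for $n=2$ the absence of a closed form makes the qualitative analysis of \eqref{EODE2D} --- above all, positivity of the gap $a-t/R^3$ for all times, which is what guarantees the solution really retains the structure \eqref{ESol2D} rather than transitioning to a different regime --- the genuinely nontrivial step; everything else reduces to bookkeeping of explicit radial computations and an appeal to uniqueness.
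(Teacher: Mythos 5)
Your proposal follows essentially the same route as the paper's own derivation in Section \ref{S4} (taken from \cite{GKL}): glue the explicit radial calibrations of $B_{R(t)}$ and its complement (resp.\ the forced outer field $-x/|x|$ when $n=2$) into a Cahn--Hoffman field, match the bulk and surface-delta parts of $u_t=-\Delta\operatorname{div}Z$ to obtain the ODE system for $(a,R)$, integrate it in closed form for $n\neq 2$ and analyze it qualitatively for $n=2$, then invoke uniqueness. One small point to reconcile: your conservation identity $aR^2+2t/R\equiv a_0R_0^2$ (which is the correct consequence of $u(t)-u_0\in D^{-1}$, and agrees with the delta-matching computation $\partial_\nu\operatorname{div}Z^{\mathrm{in}}-\partial_\nu\operatorname{div}Z^{\mathrm{out}}=4/R^2-1/R^2=3/R^2$) is \emph{not} compatible with the coefficient $2\cdot 2/R^2$ in \eqref{EODE2D} as printed, so the two derivations you present as equivalent give different right-hand sides in the second equation --- a bookkeeping discrepancy to resolve rather than a gap in the method.
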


The reason why there is no finite extinction time for $n\leq2$ is related to the fact that $0$ is not an element of the affine space $u_0+D^{-1}$ where the flow lives if $\int_{\mathbb{R}^n}u_0\;dx\neq0$.
 We shall discuss finite extinction properties in the next section.

{\BLUE Similar analysis can be carried out for more general radially symmetric data. In the case of the second order problem, as we have checked in Theorem \ref{TCA2}, any annulus, with any choice of signature, is calibrable. Thus, by pasting together the calibrations for the annuli $A_{R_{k-1}}^{R_k}$, ball $B_{R_0}$ and ball exterior $\mathbb{R}^n\backslash B_{R_{m-1}}$, we can construct a Cahn--Hoffman vector field $Z_0$ for any piecewise constant, radially symmetric initial datum (called a \emph{stack}), i.e. 
\begin{equation} 
\label{stack_init} 
	u_0 = a_{0}^0 \mathbf{1}_{B_{R_0}} + \sum_{k=1}^{m-1} a_{0}^k \mathbf{1}_{A_{R_{k-1}}^{R_k}}
	+ a_{0}^m \mathbf{1}_{\mathbb{R}^n\backslash B_{R_{m-1}}}
\end{equation} 
with $0<R_0<R_1<\cdots<R_{m-1}$, $a_0^k\in\mathbb{R}$ ($k=0,\ldots,m$). Since the $L^2$ function $\operatorname{div} Z_0$ is constant on $A_{R_{k-1}}^{R_k}$, $B_{R_0}$ and  $\mathbb{R}^n\backslash B_{R_{m-1}}$, this shows that 
for small $t> 0$ 
\begin{equation} \label{stack_evo}
u(t) = a^0(t) \mathbf{1}_{B_{R_0}} + \sum_{k=1}^{m-1} a^k(t) \mathbf{1}_{A_{R_{k-1}}^{R_k}}
	+ a^m(t) \mathbf{1}_{\mathbb{R}^n\backslash B_{R_{m-1}}},
 \end{equation} 
with $d a^k/dt$ constant, depending only on $R_{k-1}$, $R_k$ and the signs of $a_{0}^{k+1} - a_{0}^k$ and $a_{0}^k - a_{0}^{k-1}$ for $k = 1, \ldots, m-1$ ($d a^0/dt$ and $d a^m/dt$ are also constant, depending only on $R_0$, $\operatorname{sgn}(a^1 - a^0)$, and $R_m$, $\operatorname{sgn}(a^m - a^{m-1})$, respectively. This determines the evolution until the first time instance $t>0$ such that $a^k(t) = a^{k+1}(t)$ for some $k \in \{0, \ldots, m-1\}$ (first \emph{merging time}). Then, the solution is again of form \eqref{stack_evo}, with a smaller $m$, and one can repeat the procedure until the solution becomes constant. 

In \cite{GKL}, the evolution of stacks under the fourth-order TV flow was studied in detail. In particular, it has been proved that in dimensions $n \neq 2$, if the initial datum is of form \eqref{stack_init}, i.e.~if $u_0$ is a stack, then $u(t)$ is also a stack for $t>0$. The same does not hold in $n=2$ as evidenced already by evolution of characteristic functions of balls. 

In the second-order case, one can produce more complicated examples of explicit piecewise constant solutions with initial data such as characteristic functions of sums of calibrable sets which are distant enough from each other \cite{BCN}. In the fourth-order case, we do not know about any non-radial examples of explicit solutions in $n \geq 2$.  

In the case of bounded domains, the solutions can be more complicated. However, in the 1D setting (where, to be fair, the only connected bounded domains are intervals) one can produce explicit solutions for a dense set of initial data: the step functions. Let us present the construction in the case of periodic boundary condition, i.e.~$\Omega= \mathbb{R}/\mathbb{Z}$, for the second-order flow. Take $u_0$ a step function, that is 
\begin{equation} \label{step_init} 
	u_0 = a_{0}^1 \mathbf{1}_{[x_0,x_1)} + a_{0}^2 \mathbf{1}_{[x_1,x_2)} + \ldots +a_{0}^m \mathbf{1}_{[x_{m-1},x_m)},
\end{equation}
where $x_m = x_0 + 1$. 
We can assume that $a_{0}^{k-1} \neq a_{0}^k$ for $k=1,\ldots,m$ and $a_{0}^m \neq a_{0}^1$.   Since the intervals are calibrable, for small $t>0$ (until the first merging time) we have  
\begin{equation} 
\label{step_evo} 
	u(t) = a^1(t) \mathbf{1}_{[x_0,x_1)} + a^2(t) \mathbf{1}_{[x_1,x_2)} + \ldots +a^m(t) \mathbf{1}_{[x_{m-1},x_m)}
\end{equation} 
with $a^k$ evolving at constant speed
\[ d a^k /dt = \theta_k /(x_k - x_{k-1}) \text{ for } k=1,\ldots,m,\]
\begin{equation}\label{theta_k} 
\theta_k = \left\{ \begin{array}{ll} 
+2 &\text{if } a^k_0 < a^{k-1}_0 \text{ and }  a^k_0 < a^{k+1}_0, \\ 
-2 &\text{if } a^k_0 > a^{k-1}_0 \text{ and }  a^k_0 > a^{k+1}_0, \\
0 & \text{otherwise.}
\end{array}\right. 
\end{equation} 
Then, as in the radially symmetric case, we can show that the solution remains a step function throughout the evolution. 
}
\section{Upper bounds for the extinction time} \label{S5} 

In many examples, the solution may have a finite extinction time.
 We consider this problem both for the second-order and the fourth-order problem.
 For an initial datum $u_0$, the \emph{extinction time} of a solution $u$ is defined as
\[
	T^*(u_0) = \inf \left\{ t \in (0,\infty) \bigm| u(x,\tau) = 0
	\ \text{for}\ \tau \geq t \right\}.
\]

\subsection{Second and fourth-order problems} \label{S5S1} 

We consider the second-order problem \eqref{E1TV}.
 We first give an upper bound for the extinction time given by \cite[Theorem 2.4, Theorem 2.5]{GK}.
 Let $S_n$ be the best constant of the Sobolev (isoperimetric) inequality
\[
	\| v \|_{n/(n-1)} \leq S_n TV(v),
\]
where $\|v\|_p=\left(\int_{\mathbb{R}^n}|v|^p\;dx\right)^{1/p}$.
\begin{thm}[Second-order problem] \label{TUB2}
\begin{enumerate}
\item[(i)] Assume that $n\geq2$ and $u_0\in L^2(\mathbb{R}^n)$.
 Then
\[
	T^*(u_0) \leq S_n \|u_0\|_n.
\]
\item[(i\hspace{-0.1em}i\hspace{-0.1em}i)] Assume that $n=1$.
 Then $\|u\|_1(t)\leq\|u_0\|_1-t$, where $u$ is the solution to \eqref{E1TV}.
 In particular,
\[
	T^*(u_0) \leq \|u_0\|_1.
\]
If $u_0\in L^2(\mathbb{R})$ does not belong to $L^1(\mathbb{R})$, the solution $u$ may not have finite extinction time, i.e., $T^*(u_0)=\infty$.
\end{enumerate}
\end{thm}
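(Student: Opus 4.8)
\section*{Proof proposal}

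The plan is to control, along the flow, a power of an $L^p$-norm of $u(t)$, combining the Cahn--Hoffman characterisation of Theorem~\ref{TExp}(i) with the Sobolev/isoperimetric inequality $\|v\|_{n/(n-1)}\le S_nTV(v)$. For a.e.\ $t$ write $u_t=\operatorname{div}Z$ with $|Z|\le1$ and $-(u,\operatorname{div}Z)_{L^2}=TV(u)$. The last equality together with $|Z|\le1$ forces the Anzellotti pairing $(Z,Du)$ to equal the measure $|Du|$; hence, for any nondecreasing locally Lipschitz $\beta$ with $\beta(0)=0$, the $BV$ chain rule gives $(Z,D\beta(u))=|D\beta(u)|$, and integration by parts (no boundary term on $\mathbb{R}^n$, once $\beta(u)$ has enough integrability) yields
\begin{equation}\label{eq:plan-chain}
	-\int_{\mathbb{R}^n}\beta(u)\operatorname{div}Z\,dx \;=\; TV\bigl(\beta(u)\bigr).
\end{equation}
To make \eqref{eq:plan-chain} legitimate I would first reduce to the case where $u(t)$ stays bounded and lies in the relevant $L^p$ space, by approximating $u_0$ with truncated, compactly supported data $u_0^k$ (using the comparison principle to keep $u^k$ bounded, and, when $n=1$, the $L^1$-contraction recalled in Section~\ref{S2S2} to keep $u^k(t)\in L^1$), and then remove the truncation at the end via the continuous dependence in Proposition~\ref{PAb}, since the bounds below for $u_0^k$ majorise those for $u_0$.

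\emph{Part (i).} Assume $n\ge2$ and, without loss of generality (else the bound is vacuous), $u_0\in L^n$. Apply \eqref{eq:plan-chain} with $\beta(s)=|s|^{n-2}s$, so $\|\beta(u)\|_{n/(n-1)}=\|u\|_n^{\,n-1}$, and differentiate:
\[
	\frac{d}{dt}\,\|u(t)\|_n^{\,n}=n\int_{\mathbb{R}^n}|u|^{n-2}u\,u_t\,dx=-\,n\,TV\bigl(|u|^{n-2}u\bigr)\le-\frac{n}{S_n}\,\|u(t)\|_n^{\,n-1}.
\]
Writing $\psi=\|u\|_n$, this reads $\psi^{\,n-1}\psi'\le-\psi^{\,n-1}/S_n$, i.e.\ $\psi'\le-1/S_n$ while $\psi>0$; integrating gives $\|u(t)\|_n\le\|u_0\|_n-t/S_n$, hence $u$ is extinct by $t=S_n\|u_0\|_n$.

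\emph{Part (iii) and the final sentence.} For $n=1$ we cannot take $\beta=\operatorname{sgn}$ directly, so I would approximate with the convex $j_\varepsilon(s)=\sqrt{s^2+\varepsilon^2}-\varepsilon$, whose derivative $j_\varepsilon'$ is nondecreasing and bounded. Assuming (w.l.o.g.) $u_0\in L^1(\mathbb{R})$, the solution stays in $L^1$, so $u(t)\to0$ at $\pm\infty$, and hence so does $j_\varepsilon'(u(t))$; a $BV$ function on $\mathbb{R}$ vanishing at $\pm\infty$ has total variation at least $2$ times its sup norm. Applying \eqref{eq:plan-chain} with $\beta=j_\varepsilon'$ and integrating over $[t_1,t_2]\subset(0,\infty)$,
\[
	\int_{\mathbb{R}}j_\varepsilon(u(t_2))\,dx-\int_{\mathbb{R}}j_\varepsilon(u(t_1))\,dx=-\int_{t_1}^{t_2}TV\bigl(j_\varepsilon'(u(s))\bigr)\,ds\le-\int_{t_1}^{t_2}\frac{2\,\|u(s)\|_\infty}{\sqrt{\|u(s)\|_\infty^{2}+\varepsilon^{2}}}\,ds.
\]
Letting $\varepsilon\downarrow0$ (monotone convergence on the left, Fatou on the right, with $\|u(s)\|_\infty>0$ for $s<T^*(u_0)$) gives $\|u(t_2)\|_1-\|u(t_1)\|_1\le-2(t_2-t_1)$, hence $\|u(t)\|_1\le\|u_0\|_1-t$ (in fact $\le\|u_0\|_1-2t$) and $T^*(u_0)\le\|u_0\|_1$. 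For the non-extinction assertion I would take $u_0\in L^2(\mathbb{R})\setminus L^1(\mathbb{R})$ with $u_0\ge0$ and a heavy tail, e.g.\ $u_0(x)=(1+x^2)^{-3/8}$; since $Z(\cdot,s)\in H^1_{\mathrm{loc}}\subset C$ with $|Z|\le1$, for every $M>0$ one has $\frac{d}{ds}\int_{-M}^{M}u(s)\,dx=Z(M,s)-Z(-M,s)\ge-2$, so $\int_{-M}^{M}u(t)\,dx\ge\int_{-M}^{M}u_0\,dx-2t$; letting $M\to\infty$ forces $\int_{\mathbb{R}}u(t)\,dx=\infty$, so $u(t)\not\equiv0$ for all $t$.

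\emph{Main obstacle.} The crux is the rigorous justification of \eqref{eq:plan-chain}: identifying the Anzellotti pairing with $|Du|$ from the equality $-(u,\operatorname{div}Z)_{L^2}=TV(u)$, pushing the $BV$ chain rule through the jump part of $Du$, securing the $t$-regularity needed to differentiate $t\mapsto\|u(t)\|_p^p$, and --- in the one-dimensional case --- handling the behaviour of $u(t)$ and of the Cahn--Hoffman field at spatial infinity (exactly what rules out the naive choice $\beta=\operatorname{sgn}$ and what distinguishes $L^1$ from non-$L^1$ data). The truncation/approximation step needed to legitimise these manipulations, and the verification that it passes to the limit, is routine but should be spelled out; making the "mass at infinity is infinite" argument precise for the counterexample is a smaller, genuine point.
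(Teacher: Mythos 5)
Your proposal is correct in its essentials and follows the same formal computation as the paper: test the equation with $\beta(u)=|u|^{n-2}u$, use the one-homogeneity/chain-rule identity to recognize the dissipation as $TV(|u|^{n-2}u)$, apply the isoperimetric Sobolev inequality to $\|u\|_n^{n-1}$, and integrate the resulting linear differential inequality; for $n=1$ your $j_\varepsilon(s)=\sqrt{s^2+\varepsilon^2}-\varepsilon$ is precisely the convex approximation of $|\cdot|$ that the paper invokes. Where you genuinely diverge is in the justification of the formal step. The paper (following \cite{GK}) regularizes the \emph{equation}, approximating it by smooth uniformly parabolic problems and passing to the limit, precisely because $t\mapsto u(t)$ need not be absolutely continuous with values in $L^n$. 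You instead stay with the limit equation and justify the identity $-\int\beta(u)\operatorname{div}Z\,dx=TV(\beta(u))$ via the Anzellotti pairing: from $-(u,\operatorname{div}Z)_{L^2}=TV(u)$ and $|Z|\le1$ you deduce $(Z,Du)=|Du|$ as measures and then push this through nondecreasing Lipschitz $\beta$, truncating the data to secure integrability. This is the route of \cite{ABC1,ACM} rather than of \cite{GK}; it buys you a statement formulated directly in terms of the Cahn--Hoffman field and avoids a second approximation of the PDE, at the price of the nontrivial $BV$ chain-rule lemma for the pairing (which you correctly flag as the crux and which does hold, including on the jump and Cantor parts). Two further remarks: your one-dimensional decay rate $\|u(t)\|_1\le\|u_0\|_1-2t$ is sharper than the paper's stated $\|u_0\|_1-t$ and is consistent with the explicit solution $a(t)\mathbf{1}_{(-R,R)}$, for which $T^*=\|u_0\|_1/2$; and your non-extinction argument via $\frac{d}{ds}\int_{-M}^{M}u\,dx=Z(M,s)-Z(-M,s)\ge-2$ for nonnegative heavy-tailed data is a clean, correct way to substantiate the final sentence, which the paper itself does not detail.
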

Idea of the proof.
 In the case $n=2$.
 By the abstract definition of a solution, we know that
\[
	\frac12 \frac{d}{dt} \int_{\mathbb{R}^2} |u|^2\;dx
	= (u, u_t)_{L^2} = -\left( u, \partial_{L^2}TV(u) \right)_{L^2}.
\]
Since $TV(u)$ is positively one-homogeneous, we see $\left(u, \partial_{L^2}TV(u)\right)=TV(u)$
 (see Lemma \ref{LAlt}).
 In particular,
\[
	\frac12 \frac{d}{dt} \int_{\mathbb{R}^2} |u|^2\;dx
	= -TV(u).
\]
We apply the Sobolev inequality to get
\[
	\frac12 \frac{d}{dt} \int_{\mathbb{R}^2} |u|^2\;dx
	\leq -\frac{1}{S_2} \left( \int_{\mathbb{R}^2} |u|^2\;dx \right)^{1/2},
\]
which yields
\[
	\frac{d}{dt} \| u \|_2(t) \leq -S_2^{-1}
	\quad\text{provided that}\quad \| u \|_{L^2}(t) \neq 0.
\]
This yields
\[
	\| u \|_2(t) \leq \| u_0 \|_2 - S_2^{-1} t
\]
which implies the desired estimate for $T^*(u_0)$.

In the case $n\geq3$, we formally multiply $|u|^{n-2}u$ to \eqref{E1TV} to get
\[
	\frac1n \frac{d}{dt} \int_{\mathbb{R}^n} |u|^n\;dx
	= \int_{\mathbb{R}^n} |u|^{n-2}uu_t\;dx
	= \int_{\mathbb{R}^n} |u|^{n-2}u \operatorname{div} \left(\nabla u/|\nabla u|\right)dx.
\]
Integrating by parts, the right-hand side becomes
\[
	-(n-1) \int_{\mathbb{R}^n} |u|^{n-2} \nabla u\cdot\nabla u/|\nabla u|
	= -(n-1) \int_{\mathbb{R}^n} |u|^{n-2} |\nabla u|\;dx
	= - \int_{\mathbb{R}^n} \left| \nabla|u|^{n-2} u \right| dx.
\]
Then applying the Sobolev inequality for $v=|u|^{n-2}u$ to get
\[
	\frac1n \frac{d}{dt} \int_{\mathbb{R}^n} |u|^n\;dx
	\leq -S_n^{-1} \left( \int_{\mathbb{R}^n} |u|^n\;dx \right)^{(n-1)/n},
\]
i.e.,
\[
	\frac1n \frac{d}{dt} \| u \|_n^n
	\leq -S_n^{-1} \| u \|_n^{n-1}.
\]
Thus, as for $n=2$, we have $\|u\|_n(t)\leq\|u_0\|_n-S_n^{-1}t$ if $\|u\|_n(t)\neq0$, and the desired estimate holds.
 The argument for $n\geq3$ is formal because we do not know whether multiplication by $|u|^{n-2}u$ is justified since $u$ may not be an $L^n$-valued absolutely continuous function of $t$.
 Fortunately, our argument is justified by approximation of the equation by smooth uniformly parabolic equations as in \cite{GK}.
 In one-dimensional setting, we approximate $\int|u|\;dx$ by $\int f(u)$ where $f$ is a convex function; see \cite[Section 2.5]{GK}. 
\begin{remark}[\cite{GK}] \label{RPN}
The results still hold when $\Omega=\mathbb{T}^n$ or a smooth bounded domain with the Neumann boundary condition for average-free $L^2$ initial data with possibly different value of the best Sobolev constant $S_n$.
 Note that $S_n$ may depend on the shape of $\Omega$ but it is scale invariant in the sense that $S_n$ is invariant under dilation, i.e., $S_n(\lambda\Omega)=S_n(\Omega)$ for any $\lambda>0$, where $\lambda\Omega=\left\{ \lambda x \mid x\in\Omega \right\}$.
 For the Dirichlet problem, it still holds for $n=2$ but there is no literature claiming the estimate $T^*(u_0)\leq S_n\|u_0\|_n$ for $n\geq3$; 
justification of the formal estimate will be difficult since boundary detachment phenomenon is expected unless the domain is mean-convex.

For the gradient flow of $p$-Dirichlet energy for $p>1$, i.e., $u_t\in-\partial E_p(u)$, an extinction time estimate
\[
	T^*(u_0) \leq C \|u_0\|_s^{2-p}, \quad
	s = n(2-p)/p, \quad
	n \geq 2, \quad
	1<p\leq \frac{2n}{n+1}
\]
has been proved by a similar method \cite[Proposition 2.1, Proposition 3.1 of Chapter V\hspace{-0.1em}I\hspace{-0.1em}I]{DiB} both for the Cauchy problem $\Omega=\mathbb{R}^n$ and the Dirichlet problems for a bounded domain.
\end{remark}
\begin{remark} \label{RCom}
Since \eqref{E1TV} has a comparison principle, the estimate $T^*(u_0)<\infty$ is often proved by comparison with the evolution of the characteristic function $a(t)\mathbf{1}_{B_{R_0}}$.
 For example, suppose that $u_0\geq0$ and $u_0\leq a_0$ and $u_0\equiv0$ outside $B_{R_0}$.
 Then
\[
	0 \leq u(x,t) \leq a(t) \quad\text{in}\quad B_{R_0}
\]
and $a(t)=\left(a_0-(n/R_0)t\right)_+$.
 Thus $T^*(u_0)\leq(\sup u_0)R_0/n$.
\end{remark}

We next study the fourth-order problem.
 We first calculate the growth of $L^p$-norm in a formal way
\begin{align*}
	\frac{1}{p(p-1)} \frac{d}{dt} \int_{\mathbb{R}^n} |u|^p\;dx
	&= \frac{1}{p-1} \int_{\mathbb{R}^n} |u|^{p-2} u u_t\;dx \\
	&= \frac{1}{p-1} \int_{\mathbb{R}^n} |u|^{p-2} u (-\Delta\operatorname{div}z)\;dx
	\quad\text{with}\quad z = \nabla u/|\nabla u|
\end{align*}
since \eqref{ETV4} is of the form $u_t=-\Delta\operatorname{div}z$.
 Integrating by parts, the right-hand side equals
\[
	\int_{\mathbb{R}^n} |u|^{p-2} \nabla u\cdot\nabla\operatorname{div}z\;dx
	= -\int_{\mathbb{R}^n} |u|^{p-2} \nabla^2 u: \nabla\otimes z\;dx
	-(p-2) \int_{\mathbb{R}^n} |u|^{p-4} u \nabla\otimes z: \nabla u\otimes\nabla u\;dx,
\]
where we assume that effect at space infinity does not appear; 
here $A:B=\operatorname{trace}(AB^T)$ for matrices $A$ and $B$ and $\nabla\otimes z$ for a vector field $z=(z_1,\ldots,z_n)$ denotes a matrix $(\partial_i z_j)$ for $\partial_i=\partial/\partial x_i$.
 Since $z$ is a subgradient of a positively one-homogeneous function of $\nabla u$ we see that $(\nabla\otimes z)(\nabla u)^T=0$ for $\nabla u=(\partial_1u,\ldots,\partial_nu)$ from the Euler's identity for a positively zero-homogeneous function.
 Here is a more explicit argument.
 Since
\[
	\nabla\otimes z = \nabla\otimes \frac{\nabla u}{|\nabla u|} = \frac{1}{|\nabla u|}  \nabla^2 u \left( I - \frac{\nabla u}{|\nabla u|} \otimes \frac{\nabla u}{|\nabla u|} \right) 
\]
it is rather clear that $(\nabla\otimes z)(\nabla u)^T=0$ since $P=I-\nabla u\otimes\nabla u/|\nabla u|^2$ is a projection orthogonal to $\nabla u$.
 We end up with
\[
	\frac{1}{p(p-1)} \frac{d}{dt} \int_{\mathbb{R}^n} |u|^p\;dx
	= -\int_{\mathbb{R}^n} |u|^{p-2} \nabla^2 u: \nabla\otimes z\;dx \leq 0  
\]
since $|\nabla u|\nabla\otimes z=\nabla^2uP$ and $P$ is a non-negative symmetric matrix.
 This is a formal argument that needs to be rigorously justified, see \cite{GKL2}. Eventually, we have
\begin{lemma} \label{LLPB}
Assume that $u_0\in L^p(\mathbb{R}^n)\cap E^{-1}$.
 Let $u$ be the solution of \eqref{ETV4} with initial datum $u_0$.
 Then $\|u\|_p(t)$ is non-increasing in $t$ for $1\leq p<\infty$.
 In particular, $\|u\|_p(t)\leq\|u_0\|_p$ for all $t\geq0$.
\end{lemma}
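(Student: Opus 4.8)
The display just above the statement already isolates the two integrals one meets upon differentiating $\int|u|^p$ along the flow. The plan is to turn that formal computation into a rigorous one on a regularized version of \eqref{ETV4} and then pass to the limit; the mild non-smoothness of $s\mapsto|s|^{p-2}s$ at $s=0$ is absorbed by regularizing the integrand $s\mapsto|s|^p$ as well.

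I would first smooth the energy: fix smooth convex $f_\varepsilon\colon\mathbb{R}^n\to\mathbb{R}$ with $f_\varepsilon\to|\cdot|$ locally uniformly and $f_\varepsilon$ positively one-homogeneous on $\{|q|\ge\varepsilon\}$ (for instance $f_\varepsilon(q)=|q|$ for $|q|\ge\varepsilon$ and $f_\varepsilon(q)=(|q|^2+\varepsilon^2)/(2\varepsilon)$ for $|q|<\varepsilon$, mollified near $|q|=\varepsilon$, with an extra $\tfrac{\varepsilon}{2}|q|^2$ added if a uniformly parabolic approximation is wanted). As in the second-order case treated in \cite{GK}, the $E^{-1}$-gradient flow of $\mathcal{E}_\varepsilon(u)=\int_{\mathbb{R}^n}f_\varepsilon(\nabla u)\,dx$ is a smooth parabolic fourth-order equation $\partial_t u_\varepsilon=-\Delta\operatorname{div}z_\varepsilon$, $z_\varepsilon=\nabla f_\varepsilon(\nabla u_\varepsilon)$, and for smooth, rapidly decaying data $u_0^\varepsilon$ with $u_0^\varepsilon\to u_0$ in $E^{-1}$ and $\|u_0^\varepsilon\|_p\to\|u_0\|_p$ it admits a smooth solution $u_\varepsilon$ all of whose derivatives decay at infinity, so the integrations by parts below are licit. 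Testing $\partial_t u_\varepsilon=-\Delta\operatorname{div}z_\varepsilon$ against $\psi'(u_\varepsilon)$, where $\psi\in C^3(\mathbb{R})$ is convex with $\psi'$ bounded and $\psi$ close to $s\mapsto|s|^p$ (for $p=1$, close to $|\cdot|$), and integrating by parts twice, one obtains, with $A_\varepsilon:=\nabla^2 f_\varepsilon(\nabla u_\varepsilon)\ge0$ and $\nabla\otimes z_\varepsilon=\nabla^2 u_\varepsilon\,A_\varepsilon$,
\begin{equation} \label{LpRegApprox}
\frac{d}{dt}\int_{\mathbb{R}^n}\psi(u_\varepsilon)\,dx
= -\int_{\mathbb{R}^n}\psi''(u_\varepsilon)\,\nabla^2 u_\varepsilon:\nabla\otimes z_\varepsilon\,dx
-\int_{\mathbb{R}^n}\psi'''(u_\varepsilon)\,(\nabla^2 u_\varepsilon\,\nabla u_\varepsilon)\cdot(A_\varepsilon\,\nabla u_\varepsilon)\,dx.
\end{equation}
Since $\nabla^2 u_\varepsilon:\nabla\otimes z_\varepsilon=\operatorname{tr}\bigl((\nabla^2 u_\varepsilon)^2 A_\varepsilon\bigr)=\bigl\|A_\varepsilon^{1/2}\nabla^2 u_\varepsilon\bigr\|^2\ge0$ and $\psi''\ge0$, the first term on the right of \eqref{LpRegApprox} is $\le0$ — the rigorous form of the sign observed in the text.

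The second term in \eqref{LpRegApprox} is the price of $f_\varepsilon$ failing to be one-homogeneous at the origin. By Euler's identity for the zero-homogeneous map $\nabla f_\varepsilon$ on $\{|q|\ge\varepsilon\}$ one has $A_\varepsilon(q)q=0$ there, whereas $A_\varepsilon(q)q=q/\varepsilon$ on $\{|q|<\varepsilon\}$; hence that integrand is supported in $\{|\nabla u_\varepsilon|<\varepsilon\}$ and is dominated there by $|\psi'''(u_\varepsilon)|\,\varepsilon\,|\nabla^2 u_\varepsilon|$ (plus an analogous $O(\varepsilon)$ contribution from any regularizing term). Integrating \eqref{LpRegApprox} in time, letting $\psi$ tend to $(\,\cdot\,)^p$, and then letting $\varepsilon\to0$ — using $u_\varepsilon\to u$ in $C_{loc}\bigl([0,\infty),E^{-1}\bigr)$ by stability of gradient flows under the Mosco convergence $\mathcal{E}_\varepsilon\to TV$ (hence $u_\varepsilon(t)\to u(t)$ a.e.\ along a subsequence) and lower semicontinuity of $\|\cdot\|_p$ via Fatou — would give $\|u(t)\|_p\le\|u_0\|_p$, provided the time–space integral of the error term goes to $0$ with $\varepsilon$.

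That last proviso is the main obstacle: one needs an a priori bound on $\nabla^2 u_\varepsilon$ on the degeneracy set $\{|\nabla u_\varepsilon|<\varepsilon\}$, integrated in space–time and weighted by $|\psi'''(u_\varepsilon)|$, that beats a single power of $\varepsilon$. The natural energy estimate (take $\psi(s)=s^2/2$ in \eqref{LpRegApprox}) only controls $\int_0^t\!\!\int\operatorname{tr}\bigl((\nabla^2 u_\varepsilon)^2A_\varepsilon\bigr)$, and $A_\varepsilon$ is smallest precisely where the error concentrates, so a finer argument is needed; this is the content of the forthcoming \cite{GKL2}. One could alternatively run the whole scheme at the discrete level of the minimizing-movements approximation \eqref{min_mov_approx}, proving $\|u^N_k\|_p\le\|u^N_{k-1}\|_p$ by testing a regularization of the Euler–Lagrange equation of each $\mathcal{E}^{1/N}_{u^N_{k-1}}$ and then passing to the limit in $N$; the same control of second derivatives on the degeneracy set reappears there in the same form.
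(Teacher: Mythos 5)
Your computation coincides with the paper's own derivation preceding the lemma --- the same double integration by parts, the same use of Euler's identity $(\nabla\otimes z)(\nabla u)^T=0$ for the zero-homogeneous map $q\mapsto q/|q|$ to kill the lower-order term, and the same sign observation $\nabla^2u:\nabla\otimes z\geq0$ --- and the paper likewise presents this only as a formal argument, explicitly deferring the rigorous justification to the forthcoming \cite{GKL2}. The obstacle you isolate (controlling second derivatives of the regularized flow on the degeneracy set $\{|\nabla u_\varepsilon|<\varepsilon\}$) is exactly the part the paper does not supply either, so your proposal matches the paper's treatment in both substance and level of completeness.
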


We now discuss an upper bound for the extinction time.
 Since the extinction time for $n\leq2$ may be infinite as observed in Section \ref{S4}, we assume $n\geq3$.
 In this case, our fundamental identity
\[
	\frac12 \frac{d}{dt} \|u\|_{D^{-1}}^2 (t) = -TV(u) 
\]
is obtained by taking inner product of $u$ with \eqref{ETV4}.
 By the Sobolev inequality, we see that
\begin{equation} \label{EKI}
	\frac12 \frac{d}{dt} \|u\|_{D^{-1}}^2 (t) 
	\leq -S_n^{-1} \|u\|_{n/(n-1)} 
\end{equation}
as for the second-order problem.
Again by the Sobolev inequality
\[
	\|u\|_{2^*} \leq C_n \|u\|_{D_0^1}
\]
for $2^*=2n/(n-2)$, hence
\begin{align} 
	\|u\|_{D^{-1}} &= \sup \left\{ \langle u,v \rangle \Bigm|
	\|v\|_{D_0^1} \leq 1 \right\} \notag\\
	&\leq C_n \sup \left\{ \langle u,v \rangle \bigm|
	\|v\|_{2^*} \leq 1 \right\}
	= C_n \|u\|_{(2^*)'} \label{EDSob}
\end{align}
where $(2^*)'=2n/(n+2)$.
 In the case $n=4$, $(2^*)'=4/3=n/(n-1)$ so \eqref{EKI} yields
\[
	\frac12 \frac{d}{dt} \|u\|_{D^{-1}}^2(t)
	\leq -S_4^{-1} C_4^{-1} \|u\|_{D^{-1}}(t)
\]
and this implies
\[
	T^*(u_0) \leq S_4 C_4 \|u\|_{D_0^1}.
\]
This type of estimate is already obtained in \cite{GK} when $\Omega=\mathbb{T}^n$.
 For other $n$, we recall the H\"older inequality
\[
	\|u\|_{(2^*)'}	\leq \|u\|_{n/(n-1)}^\theta \|u\|_p^{1-\theta}
\]
with $\frac{n+2}{2n}=\frac{n-1}{n} \theta + \frac1p (1-\theta)$ for $n\geq3$.
 If $n=4$, $(2^*)'=4/3= n/(n-1)$ so $\theta=1$.
 If $n\geq 5$ we have to take $p > (2^*)'$, and if $n=3$ we have to take $p<6/5=(2^*)'$ so that $0<\theta<1$.
 By \eqref{EKI} and \eqref{EDSob}, we now obtain 
\begin{align*}
	\frac12 \frac{d}{dt} \|u\|_{D^{-1}}^2(t)
	&\leq -S_n^{-1} \left( C_n^{-1} \|u\|_{D^{-1}} \right)^{1/\theta} \Bigm/\|u\|_p^{(1-\theta)/\theta} \\
	&\leq -S_n^{-1} \left( C_n^{-1} \|u\|_{D^{-1}} \right)^{1/\theta} \Bigm/ \|u_0\|_p^{(1-\theta)/\theta}.
\end{align*}
In the last inequality, we invoke Lemma \ref{LLPB}.
 In other words,
\[
	\frac{d}{dt} \|u(t)\|_{D^{-1}} 
	\leq -A_\theta^{-1} \|u(t)\|_{D^{-1}}^{1/\theta-1} \bigm/ \|u_0\|_p^{1/\theta-1}, \quad
	A_\theta = S_n C_n^{1/\theta}.
\]

The differential inequality
\[
	\frac{dy}{dt} \leq -ky^\sigma, \quad
	y(0) = y_0 > 0,
\]
with $\sigma\in[0,1)$, $k>0$ yields the estimate
\[
	y(t) \leq \left( y_0^{1-\sigma} - (1-\sigma) kt \right)_+^\frac{1}{1-\sigma}.
\]
In particular, $y(t)$ must be zero for $t\geq t_*=y_0^{1-\sigma}\bigm/(1-\sigma)k$.
 Applying this estimate to our differential inequality for $y(t)=\|u(t)\|_{D^{-1}}$ implies that
\[
	\|u(t)\|_{D^{-1}} \leq \left( \|u_0\|_{D_0}^{2-1/\theta} - (2-1/\theta) A_\theta^{-1} t \bigm/ \|u_0\|_p^{(1-\theta)/\theta} \right)_+^{\theta/(2\theta-1)}
\]
provided that $0\leq\frac{1}{\theta}-1<1$, which is equivalent to
\begin{equation} \label{EEIN}
	\frac12 < \theta = \left(\frac{n+2}{2n} - \frac1p \right) \biggm/
	\left(\frac{n-1}{n} - \frac1p \right) \leq 1.
\end{equation}
In this case, we have an upper bound for the extinction time
\begin{equation} \label{EEXT}
	T^*(u_0) \leq \frac{A_\theta\theta}{2\theta-1} \|u_0\|_p^{1/\theta-1}
	\|u_0\|_{D^{-1}}^{2-1/\theta}.
\end{equation}
It remains to check the validity of the inequality \eqref{EEIN}.
 If $n=4$ so that $(n+2)/2n=(n-1)/n$, $\theta$ must be $1$ and \eqref{EEXT} is reduced to what we already obtained.
 Since $\theta>1/2$ in \eqref{EEIN} can be written as
\[
	\frac{n+2}{2n} - \frac{n-1}{2n} > \frac{1}{2p}, \quad
	\text{i.e.,}\quad \frac1p < \frac3n,
\]
the estimate \eqref{EEXT} holds for all $p>n/3$ provided that $n\geq5$.
 In the case $n=3$, \eqref{EEIN} yields that $1<p<6/5=(2^*)'$.
 Summarizing what we discussed, we obtain at least formally
\begin{thm} \label{TUB4}
Let $p>n/3$ for $n=5$ and $p\in(1,6/5)$ for $n=3$.
 Assume that
\[
	\theta = \left(\frac{n+2}{2n} - \frac1p \right) \biggm/
	\left(\frac{n-1}{n} - \frac1p \right)
\]
for $n\geq3$ and $n\neq4$.
 Then
\[
	T^*(u_0) \leq \frac{A_\theta\theta}{2\theta-1} \|u_0\|_p^{1/\theta-1} \|u_0\|_{D^
{-1}}^{2-1/\theta}
\]
with $A_\theta=S_n C_n^{1/\theta}$ provided that $u_0\in L^p(\mathbb{R}^n)\cap D^{-1}(\mathbb{R}^n)$.
 In the case $n=4$,
\[
	T^*(u_0) \leq A_1 \|u_0\|_{D^{-1}}.
\]
\end{thm}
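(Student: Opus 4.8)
The plan is to distill the chain of estimates sketched in the discussion preceding the statement into a single scalar differential inequality for $y(t) := \|u(t)\|_{D^{-1}}$ and then invoke the elementary ODE comparison recorded there. First I would set up the energy dissipation identity: testing \eqref{ETV4} against $u$ in the $D^{-1}$ inner product, using $u_t \in -\partial_{D^{-1}}TV(u)$ together with the fact that $TV$ is positively one-homogeneous (Lemma \ref{LAlt}, so that $(u,\partial_{D^{-1}}^0 TV(u))_{D^{-1}} = TV(u)$), gives
\[
\tfrac12 \tfrac{d}{dt}\|u(t)\|_{D^{-1}}^2 = -TV(u(t)) \le -S_n^{-1}\|u(t)\|_{n/(n-1)},
\]
where the inequality is the isoperimetric (Sobolev) inequality on $\mathbb{R}^n$.

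Next I would convert the right-hand side into a negative power of $y$. Dualizing the Sobolev embedding $\|v\|_{2^*} \le C_n\|v\|_{D_0^1}$ across the pairing between $D_0^1$ and $D^{-1}$ yields $\|u\|_{D^{-1}} \le C_n\|u\|_{(2^*)'}$ with $(2^*)' = 2n/(n+2)$; interpolating $L^{(2^*)'}$ between $L^{n/(n-1)}$ and $L^p$ with the stated exponent $\theta$, and using Lemma \ref{LLPB} to replace $\|u(t)\|_p$ by $\|u_0\|_p$, one obtains
\[
\|u(t)\|_{n/(n-1)} \ge \bigl(C_n^{-1}\|u(t)\|_{D^{-1}}\bigr)^{1/\theta}\big/\|u_0\|_p^{1/\theta-1}.
\]
Plugging this into the energy identity and dividing by $y=\|u\|_{D^{-1}}$ gives $y' \le -A_\theta^{-1}\,y^{1/\theta-1}/\|u_0\|_p^{1/\theta-1}$ with $A_\theta=S_nC_n^{1/\theta}$, which is of the form $y' \le -ky^\sigma$ with $\sigma=1/\theta-1$ and $k=A_\theta^{-1}\|u_0\|_p^{1-1/\theta}$. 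The comparison $y(t)\le (y_0^{1-\sigma}-(1-\sigma)kt)_+^{1/(1-\sigma)}$ then forces extinction at $t_*=y_0^{1-\sigma}/((1-\sigma)k)$, which after substituting $1-\sigma=2-1/\theta=(2\theta-1)/\theta$ is exactly the claimed bound; the $n=4$ case is the degenerate instance $\theta=1$, where the inequality is linear and $T^*(u_0)\le A_1\|u_0\|_{D^{-1}}$ follows at once. Finally I would check that the scheme closes: the comparison lemma needs $0\le\sigma<1$, i.e. $1/2<\theta\le1$; a short computation turns $\theta>1/2$ into $1/p<3/n$, which together with $\theta\le1$ and the requirement that $(2^*)'$ lie between $n/(n-1)$ and $p$ yields precisely $p>n/3$ when $n\ge5$ and $1<p<6/5$ when $n=3$.

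The genuinely delicate point, and the reason the statement is advertised as holding ``at least formally'', is that both the energy identity $\tfrac12\tfrac{d}{dt}\|u\|_{D^{-1}}^2=-TV(u)$ and the monotonicity of $\|u(t)\|_p$ in Lemma \ref{LLPB} are derived by testing the flow against $u$ or against $|u|^{p-2}u$, operations which are not a priori legitimate since the solution need not be an absolutely continuous curve with values in $L^p$. Following the template of \cite{GK}, I would make this rigorous by approximating \eqref{ETV4} with smooth, uniformly parabolic fourth-order equations, proving all of the above estimates at the regularized level with constants independent of the approximation parameter, and passing to the limit; this is the content of the forthcoming \cite{GKL2}. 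A subsidiary point to keep track of is that the duality bound $\|u\|_{D^{-1}}\le C_n\|u\|_{(2^*)'}$ and the interpolation are applied along the whole trajectory, so one needs $u(t)\in L^p(\mathbb{R}^n)\cap D^{-1}(\mathbb{R}^n)$ for every $t>0$, which is guaranteed by the hypothesis $u_0\in L^p(\mathbb{R}^n)\cap D^{-1}(\mathbb{R}^n)$ together with Lemma \ref{LLPB}.
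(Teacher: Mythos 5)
Your proposal is correct and follows essentially the same route as the paper: the dissipation identity $\frac12\frac{d}{dt}\|u\|_{D^{-1}}^2=-TV(u)$, the isoperimetric inequality, the dualized Sobolev bound $\|u\|_{D^{-1}}\le C_n\|u\|_{(2^*)'}$, H\"older interpolation with exponent $\theta$, Lemma \ref{LLPB} to freeze $\|u\|_p$ at $\|u_0\|_p$, and the elementary ODE comparison, with the same bookkeeping identifying $\theta>1/2$ with $1/p<3/n$ and $n=4$ as the degenerate case $\theta=1$. Your closing remarks on the formal nature of the identities and the need for a regularized approximation also match the paper's own caveats.
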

This result is consistent with our explicit solutions in Section \ref{S4}.
 Our examples in Section \ref{S4} for $n\leq2$ shows that Theorem \ref{TUB4} cannot be extended to $n\leq2$.
 Note that our estimate is scale-invariant.

The results easily extend to the case $\Omega=\mathbb{T}^n$ by considering average free spaces.
 For a smooth bounded domain $\Omega$, there are several possible boundary conditions
\begin{enumerate}
\item[(DD)] Dirichlet--Dirichlet: $u=0$, $\operatorname{div}\left(\frac{\nabla u}{|\nabla u|}\right)=0$ on $\partial\Omega$;
\item[(ND)] Neumann--Dirichlet: $\frac{\partial u}{\partial\nu}=0$, $\operatorname{div}\left(\frac{\nabla u}{|\nabla u|}\right)=0$ on $\partial\Omega$;
\item[(DN)] Dirichlet--Neumann: $u=0$, $\frac{\partial}{\partial\nu} \operatorname{div}\left(\frac{\nabla u}{|\nabla u|}\right)=0$ on $\partial\Omega$;
\item[(NN)] Neumann--Neumann: $\frac{\partial u}{\partial\nu} =0$, $\frac{\partial}{\partial\nu} \operatorname{div}\left(\frac{\nabla u}{|\nabla u|}\right)=0$ on $\partial\Omega$.
\end{enumerate}
{\BLUE
In (DD) and (DN) case, as well as in the (NN) case if $\Omega$ is convex, the boundary terms in the calculation of $\frac{d}{dt}\int_\Omega|u|^p\;dx$ vanish, so we still expect monotonicity of $\|u\|_{L^p(\Omega)}$.
 Note that the formulation of flow for (DN) and (NN) itself is non-trivial.
 We shall discuss the formulation in the forthcoming paper \cite{GKL2}.
 In \cite{GK}, an upper bound for the extinction time in a periodic domain is obtained using a Sobolev space of negative order, and it is extended in \cite{GKM} to a bounded domain under (DD).
 In these settings, even in $n\leq2$ the solution has a finite extinction time.
 This is a big difference between $\mathbb{R}^n$ case and a bounded domain (or $\mathbb{T}^n$).
}

\subsection{Fractional case} \label{S5S2} 

We next consider the fractional case \eqref{ETVP} with constant weight, i.e., $a\equiv1$.
 In this case, we mimic the way to derive an upper bound for the extinction time by using negative Sobolev norm in the case $s=1$ discussed in \cite{GK} and \cite{GKL}, since it is not clear whether $L^p$-norms of the solution are well controlled or not.

We begin with a fundamental identity.
\begin{prop} \label{PFF} 
Let $u$ be a solution to \eqref{ETVP} (with $a\equiv1$) with initial datum $u_0\in\dot{H}_\mathrm{av}^{-s}(\mathbb{T}^n)$.
 Then
\[
	\frac12 \frac{d}{dt} \|u(t)\|_{\dot{H}_\mathrm{av}^{-s}(\mathbb{T}^n)}^2
	= -TV(u(t)) \quad\text{for a.e.}\quad t>0.
\]
\end{prop}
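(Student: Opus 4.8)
The plan is to obtain this as the standard energy--dissipation identity for the gradient flow of a positively one--homogeneous convex functional; the three ingredients are the Hilbertian chain rule along the flow curve, the regularizing effect of the flow, and Alter's Lemma \ref{LAlt}. Throughout, write $H=\dot H^{-s}_{\mathrm{av}}(\mathbb{T}^n)$, so that by Theorem \ref{TPV} the solution $u$ satisfies $u\in C([0,\infty),H)$, $u_t\in L^2((\delta,\infty),H)$ for every $\delta>0$, and $u_t(t)\in-\partial_H TV(u(t))$ for a.e.\ $t>0$.

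First I would record the chain rule. Since $u$ is continuous into $H$ and $u_t\in L^2((\delta,\infty),H)$, after modification on a null set the restriction of $u$ to each interval $[\delta,\infty)$ is an absolutely continuous $H$--valued curve with a.e.\ derivative $u_t$. A standard fact from the theory of Hilbertian gradient flows (see e.g.\ \cite{Br}) then yields that $t\mapsto\|u(t)\|_H^2$ is locally absolutely continuous on $(0,\infty)$ and
\[
	\frac12\,\frac{d}{dt}\|u(t)\|_H^2 = \bigl(u(t),u_t(t)\bigr)_H \qquad\text{for a.e.\ }t>0 .
\]

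Next I would invoke the gradient flow inclusion together with Alter's lemma. For a.e.\ $t>0$ we have $u_t(t)\in-\partial_H TV(u(t))$, so in particular $u(t)\in D(\partial_H TV)\subset D(TV)$, whence $TV(u(t))<\infty$ and the right-hand side of the asserted identity is well defined. Put $v(t):=-u_t(t)\in\partial_H TV(u(t))$. The functional $TV$ is nonnegative, lower semicontinuous, convex (being a supremum of linear functionals) and positively one--homogeneous on the real Hilbert space $H$, so Lemma \ref{LAlt} applies and gives $\bigl(u(t),v(t)\bigr)_H=TV(u(t))$. Combining this with the chain rule identity yields
\[
	\frac12\,\frac{d}{dt}\|u(t)\|_H^2 = -\bigl(u(t),v(t)\bigr)_H = -TV(u(t)) \qquad\text{for a.e.\ }t>0 ,
\]
which is the claim; and if $u_0\in D(TV)$ the same reasoning applies on all of $[0,\infty)$.

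The only point requiring some care is the Hilbertian chain rule for $t\mapsto\|u(t)\|_H^2$, but this is not special to $TV$: it is part of the general theory behind Proposition \ref{PAb} and rests only on the fact that $u$ is a locally absolutely continuous $H$--valued function with $u_t\in L^2_{\mathrm{loc}}((0,\infty),H)$. Everything else reduces to Lemma \ref{LAlt}, whose hypotheses on $TV$ (convexity and positive one--homogeneity on $H$) were already verified in Section \ref{S2S1}. As an alternative, one could read off the identity by passing to the limit in the discrete dissipation inequality for the minimizing movements scheme \eqref{min_mov}--\eqref{min_mov_approx}, but the argument through Lemma \ref{LAlt} is the most direct.
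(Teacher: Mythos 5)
Your proof is correct and follows essentially the same route as the paper, which obtains the identity by taking the $\dot H^{-s}_{\mathrm{av}}$ inner product of the inclusion $u_t\in-\partial_{\dot H^{-s}_{\mathrm{av}}}TV(u)$ with $u$ and using positive one-homogeneity via Lemma \ref{LAlt}. Your additional care with the Hilbertian chain rule is a standard and welcome filling-in of details the paper leaves implicit.
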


This is easily obtained from $u_t\in-\partial_{\dot{H}_\mathrm{av}^{-s}}TV(u)$ by taking $\dot{H}^{-s}$ inner product with $u$.

The basic strategy is the same as in Section \ref{S5S1}.
 We shall estimate the right-hand side by an interpolation inequality:
\[
	-TV(u) \leq -\|u\|_{\dot{H}_\mathrm{av}^{-s}}^{1+\alpha} \Bigm/ \|u\|_X^\alpha
\]
with a suitable norm $\|u\|_X$, which does not grow quickly as $t$ increases.
 In Section \ref{S5S1}, we take $\|u\|_X$ just $L^p$-norm since $\|u(t)\|_p$ is not increasing.
 In this section, we instead take a negative Sobolev-norm.

We begin with a simple setting when $n=2(s+1)$, where an interpolation inequality is unnecessary.
 This corresponds to the case $n=4$ for $s=1$.
 We recall a fractional Sobolev inequality
\[
	\|u\|_{L^q} \leq C_{n,s} \|u\|_{\dot{H}_\mathrm{av}^s}
	\quad\text{for}\quad u \in \dot{H}_\mathrm{av}^s(\mathbb{T}^n),
\]
where $\frac1q=\frac12-\frac{s}{n}$ for $s>0$, $q<\infty$.
 As discussed when deriving \eqref{EDSob} for $D^{-1}$, by duality we observe that
\[
	\|u\|_{\dot{H}_\mathrm{av}^{-s}} \leq C_{n,s} \|u\|_{L^{q'}},\quad
	1/q + 1/q'=1.
\]
If $s>0$ satisfies $n=2(s+1)$, then $q=n$ and $q'=n/(n-1)$.
 Since $\|u\|_{L^{\BLUE n/(n-1)}
 }\leq S_nTV(u)$ by the Sobolev (isoperimetric) inequality, Proposition \ref{PFF} implies that
\[
	\frac12 \frac{d}{dt} \|u\|_{\dot{H}_\mathrm{av}^{-s}(\mathbb{T}^n)}^2 (t)
	\leq -C_{n,s}^{-1} S_n^{-1} \|u\|_{\dot{H}_\mathrm{av}^{-s}(\mathbb{T}^n)}.
\]
We thus obtain an upper bound for the extinction time.
\begin{prop} \label{PExt}
Let $u$ be a solution to \eqref{ETVP} (with $a\equiv1$) with initial datum $u_0\in\dot{H}_\mathrm{av}^{-s}(\mathbb{T}^n)$.
 Assume that $n=2(s+1)$.
 Then,
\[
	T^*(u_0) \leq C_{n,s} S_n \|u_0\|_{\dot{H}_\mathrm{av}^{-s}(\mathbb{T}^n)}.
\]
\end{prop}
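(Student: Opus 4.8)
The plan is to run, in the fractional setting, the same differential-inequality scheme as in Section \ref{S5S1}: feed the dissipation identity of Proposition \ref{PFF} into a single scalar inequality for $\varphi(t):=\|u(t)\|_{\dot{H}_\mathrm{av}^{-s}(\mathbb{T}^n)}^2$ and integrate it.

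First I would assemble the two functional inequalities. Dualizing the fractional Sobolev embedding $\dot{H}_\mathrm{av}^s(\mathbb{T}^n)\hookrightarrow L^q(\mathbb{T}^n)$ with $1/q=1/2-s/n$ gives $\|v\|_{\dot{H}_\mathrm{av}^{-s}}\le C_{n,s}\|v\|_{L^{q'}}$ with $1/q'=1/2+s/n$. The hypothesis $n=2(s+1)$ is exactly what forces $q'=n/(n-1)$, so the isoperimetric inequality $\|v\|_{L^{n/(n-1)}}\le S_n\,TV(v)$ yields
\[
	\|v\|_{\dot{H}_\mathrm{av}^{-s}}\le C_{n,s}\|v\|_{L^{n/(n-1)}}\le C_{n,s}S_n\,TV(v)
\]
for every $v\in\dot{H}_\mathrm{av}^{-s}(\mathbb{T}^n)$, the bound being vacuous when $TV(v)=\infty$. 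Taking $v=u(t)$ in Proposition \ref{PFF} gives, for a.e.\ $t>0$,
\[
	\tfrac12\,\varphi'(t)=-TV(u(t))\le-\frac{1}{C_{n,s}S_n}\sqrt{\varphi(t)}.
\]

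Next I would integrate this. By Proposition \ref{PAb} the map $t\mapsto u(t)$ lies in $H^1_{\mathrm{loc}}\bigl((0,\infty),\dot{H}_\mathrm{av}^{-s}\bigr)$, so $\varphi$ is locally absolutely continuous on $(0,\infty)$; the elementary ODE comparison recalled in Section \ref{S5S1} (here with exponent $\sigma=1/2$ and $k=2/(C_{n,s}S_n)$) then gives $\sqrt{\varphi(t)}\le\bigl(\sqrt{\varphi(t_0)}-(C_{n,s}S_n)^{-1}(t-t_0)\bigr)_+$ for $0<t_0\le t$, and letting $t_0\downarrow0$ and using $u\in C([0,\infty),\dot{H}_\mathrm{av}^{-s})$ yields
\[
	\|u(t)\|_{\dot{H}_\mathrm{av}^{-s}}\le\Bigl(\|u_0\|_{\dot{H}_\mathrm{av}^{-s}}-\tfrac{1}{C_{n,s}S_n}\,t\Bigr)_+ .
\]
Hence $u(t_1)=0$ for some $t_1\le C_{n,s}S_n\|u_0\|_{\dot{H}_\mathrm{av}^{-s}}$. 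Finally, since $TV(0+h)-TV(0)=TV(h)\ge0=(h,0)_{\dot{H}_\mathrm{av}^{-s}}$ for all $h$, we have $0\in\partial_{\dot{H}_\mathrm{av}^{-s}}TV(0)$, so the constant path $u\equiv0$ solves the flow with datum $0$; by uniqueness in Proposition \ref{PAb}, $u(t)=0$ for all $t\ge t_1$, and therefore $T^*(u_0)\le t_1\le C_{n,s}S_n\|u_0\|_{\dot{H}_\mathrm{av}^{-s}}$.

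There is essentially no obstacle here: the role of the hypothesis $n=2(s+1)$ is precisely to make the dual Sobolev exponent land on $n/(n-1)$, so that — unlike the general situation $n\neq 2(s+1)$, cf.\ the $s=1$ analysis of Section \ref{S5S1} — neither an interpolation inequality nor an a-priori $L^p$-bound on $u$ is needed. The only step requiring a little care is the low-regularity bookkeeping (absolute continuity of $\varphi$, legitimacy of the division by $\sqrt{\varphi}$, and the limit $t_0\downarrow0$), which is handled exactly as in the proof of Theorem \ref{TUB2}.
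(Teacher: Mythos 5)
Your proposal is correct and follows essentially the same route as the paper: dualizing the fractional Sobolev embedding, observing that $n=2(s+1)$ makes the dual exponent equal to $n/(n-1)$ so the isoperimetric inequality applies directly, inserting this into the dissipation identity of Proposition \ref{PFF}, and integrating the resulting differential inequality. The extra bookkeeping you supply (absolute continuity of $t\mapsto\|u(t)\|_{\dot{H}_\mathrm{av}^{-s}}^2$ and persistence of the zero state by uniqueness) is consistent with, and slightly more detailed than, the paper's argument.
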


We next derive an interpolation inequality, which is an extension of the inequality obtained in \cite{GK} for $s=1$.
 We define a homogeneous negative Sobolev norm as
\[
	\|w\|_{\dot{W}_\mathrm{av}^{-1,p}}
	= \sup \left\{ \langle w,\varphi \rangle \Bigm|
	\varphi \in C_\mathrm{av}^1(\mathbb{T}^n),\
	\|\nabla\varphi\|_{L^{p'}} \leq 1\right\}
\]
when $w$ is a distribution on $\mathbb{T}^n$.
\begin{lemma} \label{LInt}
Assume that $s$, $p$, $\theta$ satisfy
\[
	1 \leq n \leq 2(s+1), \quad
	0 < s \leq 1, \quad
	1 \leq p \leq \infty, \quad
	\frac12 \leq \theta \leq 1
\]
and the scaling balance
\[
	s + \frac{n}{2} = (1-\theta) \left(2s+1+\frac{n}{p} \right) + \theta(n-1).
\]
Then there is a constant $C_*$ such that
\[
	\|u\|_{\dot{H}_\mathrm{av}^{-s}}
	\leq C_* \left\| (-\Delta)^{-s}u \right\|_{\dot{W}_\mathrm{av}^{-1,p}}^{1-\theta} TV(u)^\theta
	\quad\text{for all}\quad u \in \dot{H}_\mathrm{av}^{-s}(\mathbb{T}^n) \cap BV(\mathbb{T}^n).
\]
The constant $C_*$ is {\BLUE invariant under dilation }
in the sense that it is independent of $\lambda>0$ if $u$ is replaced by $u_\lambda\left(=u(\lambda x)\right)$.
\end{lemma}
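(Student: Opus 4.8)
The plan is to prove Lemma~\ref{LInt} by a Littlewood--Paley decomposition on $\mathbb{T}^n$, following (and generalizing in $s$) the argument of \cite{GK}: split the frequencies at a scale $N=2^J$ to be optimized, control the low frequencies by $\|(-\Delta)^{-s}u\|_{\dot{W}_\mathrm{av}^{-1,p}}$ and the high frequencies by $TV(u)$. Write $u=\sum_j\Delta_j u$ with $\Delta_j$ the usual dyadic blocks (restricted, as always, to the average-free range $|m|\gtrsim1$), so that $\|u\|_{\dot{H}_\mathrm{av}^{-s}}^2\sim\sum_j 2^{-2sj}\|\Delta_j u\|_{L^2}^2$. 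I would first record two one-block estimates. For the high frequencies I would use the embedding $BV(\mathbb{T}^n)\hookrightarrow\dot{B}^1_{1,\infty}(\mathbb{T}^n)$, that is $\|\Delta_j u\|_{L^1}\lesssim 2^{-j}TV(u)$ (writing $\Delta_j u$ as the convolution of $Du$ with an $L^1$ kernel of mass $\lesssim 2^{-j}$), followed by Bernstein's inequality $\|\Delta_j u\|_{L^2}\lesssim 2^{jn/2}\|\Delta_j u\|_{L^1}$, which gives $2^{-sj}\|\Delta_j u\|_{L^2}\lesssim 2^{j(n/2-1-s)}TV(u)$. For the low frequencies I would first prove $\|\Delta_j v\|_{L^p}\lesssim 2^{j}\|v\|_{\dot{W}_\mathrm{av}^{-1,p}}$ for any $v$, by duality: pair $\Delta_j v$ with $\varphi=\Delta_j\phi$ for $\phi$ with $\|\phi\|_{L^{p'}}\le1$, and note $\|\nabla\varphi\|_{L^{p'}}\lesssim 2^j$ by Bernstein for the derivative. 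Applying this to $v=(-\Delta)^{-s}u$ and using that $(-\Delta)^{s}$ acts like $2^{2sj}$ on the $j$-th block gives $\|\Delta_j u\|_{L^p}\lesssim 2^{(2s+1)j}\|(-\Delta)^{-s}u\|_{\dot{W}_\mathrm{av}^{-1,p}}$, and a final Bernstein step $L^p\to L^2$ (for $p\le2$) yields $2^{-sj}\|\Delta_j u\|_{L^2}\lesssim 2^{j(s+1+n/p-n/2)}\|(-\Delta)^{-s}u\|_{\dot{W}_\mathrm{av}^{-1,p}}$.

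Summing the $\ell^2$ norm over $j>J$ with the high-frequency bound gives a geometric series, convergent precisely because $n<2(s+1)$, with total $\lesssim 2^{J(n/2-1-s)}TV(u)$; summing over $j\le J$ with the low-frequency bound gives $\lesssim 2^{J(s+1+n/p-n/2)}\|(-\Delta)^{-s}u\|_{\dot{W}_\mathrm{av}^{-1,p}}$ (the exponent $s+1+n/p-n/2$ being positive, the top block dominates). Choosing $2^J$ to balance the two contributions then yields
\[
\|u\|_{\dot{H}_\mathrm{av}^{-s}}\lesssim \|(-\Delta)^{-s}u\|_{\dot{W}_\mathrm{av}^{-1,p}}^{1-\theta}\,TV(u)^{\theta},\qquad
\theta=\frac{s+1+n/p-n/2}{2(s+1)-n+n/p},
\]
and a short computation shows that this $\theta$ is exactly the one determined by the scaling balance in the statement, while the inequalities $1/2\le\theta\le1$ fall out automatically from $n\le2(s+1)$ and $n/p\ge0$. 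Dilation invariance of the constant is then immediate: every ingredient above (the Bernstein inequalities, the convolution bound with $L^1$ kernels, the duality estimate for $\dot{W}_\mathrm{av}^{-1,p}$) has a scale-invariant constant; equivalently, one checks that both sides transform identically under $u\mapsto u(\lambda\cdot)$.

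The step I expect to be most delicate is the uniform bookkeeping of the various Lebesgue exponents together with the endpoint situations. The borderline case $n=2(s+1)$ must be excluded from the interpolation, since then $n/2-1-s=0$ and the high-frequency series diverges; there $\theta=1$ and the inequality degenerates to the Sobolev--isoperimetric chain $\|u\|_{\dot{H}_\mathrm{av}^{-s}}\lesssim\|u\|_{L^{n/(n-1)}}\lesssim TV(u)$, which is proved directly. The case $p=\infty$ (so $p'=1$) goes through but needs the $L^1/L^\infty$ duality to be argued carefully, and for $p>2$, where Bernstein from $L^p$ to $L^2$ is not available with a scale-invariant constant, one inserts an extra interpolation of $\|\Delta_j u\|_{L^r}$ with $r\le2$ between the $L^1$ bound coming from $TV(u)$ and the $L^p$ bound coming from $\|(-\Delta)^{-s}u\|_{\dot{W}_\mathrm{av}^{-1,p}}$ before applying Bernstein. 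Finally, one should treat the block projections $\Delta_j$ and the operators $(-\Delta)^{\pm s}$ via standard multiplier estimates so that no hidden dependence on the size of $\mathbb{T}^n$ creeps in, which is what secures the claimed invariance under dilation.
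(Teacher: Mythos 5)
Your route is genuinely different from the paper's. The paper does not use a dyadic frequency decomposition: it writes $\|u\|_{\dot{H}_\mathrm{av}^{-s}}^2=\int_{\mathbb{T}^n}(-\Delta)^{-s}u\cdot u\,dx$ and splits only the second factor by the heat semigroup, $u=e^{t\Delta}u-\int_0^t\Delta e^{\tau\Delta}u\,d\tau$; the regular part is paired against $(-\Delta)^{-s}u$ via the $\dot{W}_\mathrm{av}^{-1,p}$ norm together with $\|\nabla e^{t\Delta}u\|_{L^{p'}}\lesssim t^{-n/2p}\,TV(u)$, while the singular part is estimated by $\|u\|_{\dot{H}_\mathrm{av}^{-s}}\,t^{(2+2s-n)/4}\,TV(u)$ (note this term retains a factor of the left-hand side, which is then absorbed); optimizing over $t$, with $t\sim 2^{-2J}$ playing the role of your dyadic cutoff, produces the same $\theta$. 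Your bookkeeping is correct: the $\theta$ you obtain from balancing agrees with the scaling balance, your convergence condition $n<2(s+1)$ for the high-frequency sum is exactly the paper's condition $(2s-2-n)/4>-1$, and your fallback to the Sobolev--isoperimetric chain at the borderline $n=2(s+1)$ matches how the paper organizes that case (Proposition \ref{PExt}; the paper likewise only proves the lemma for $n<2(s+1)$). The main trade-off is that the heat-semigroup version needs only semigroup smoothing estimates, whereas yours needs torus-uniform Bernstein and multiplier bounds for $(-\Delta)^{\pm s}$ on dyadic blocks, which you rightly identify as the point where scale invariance must be checked.

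There is, however, one endpoint where your argument as written does not close: $p=\infty$, which is exactly the case $\theta=1/2$ included in the lemma. There the best scale-invariant low-frequency per-block bound, $2^{-sj}\|\Delta_j u\|_{L^2}\lesssim TV(u)^{1/2}\,\|(-\Delta)^{-s}u\|_{\dot{W}_\mathrm{av}^{-1,\infty}}^{1/2}$, carries no gain in $j$, so summing in $\ell^2$ over $j\leq J$ costs a factor $\sqrt{J}\sim\sqrt{\log}$ of the ratio of the two norms; your proposed repair for $p>2$ (interpolating through $L^r$, $r\leq2$, before Bernstein) degenerates precisely at $p=\infty$ because the only admissible choice is $r=2$, which gives zero exponential gain again. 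The cure is the pairing structure the paper uses: estimate the whole low-frequency contribution at once via $\langle(-\Delta)^{-s}u,S_Ju\rangle\leq\|(-\Delta)^{-s}u\|_{\dot{W}_\mathrm{av}^{-1,\infty}}\|\nabla S_Ju\|_{L^1}\lesssim\|(-\Delta)^{-s}u\|_{\dot{W}_\mathrm{av}^{-1,\infty}}\,TV(u)$ with a single low-pass projection $S_J$, rather than block by block. You do flag $p=\infty$ as delicate, but you locate the difficulty in the $L^1$/$L^\infty$ duality rather than in the logarithmic divergence of the block summation, which is the actual obstruction. For $1\leq p<\infty$ (hence $\theta>1/2$, which is all that is used in Theorem \ref{TEEPer}) your proof goes through.
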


{\BLUE The scaling balance is a consequence of invariance of $C_*$.
 It can be rewritten as}
\[
	\theta = \frac12 + \left\{ 2 + 2p \left( \frac{2(s+1)}{n}-1 \right) \right\}^{-1}.
\]
Thus the assumption $\frac12\leq\theta\leq1$ is redundant since it follows from $n\leq2(s+1)$, $1\leq1'\leq\infty$ and the scaling balance.
 We note that
\begin{align*}
	&\theta=1 \Longleftrightarrow n=2(s+1) \\
	&\theta=\frac12 \Longleftrightarrow 1 \leq n < 2(s+1),\quad
	p=\infty,
\end{align*}
since $s\leq1$.
 The idea of the proof is parallel to that of \cite{GK}.
 We give the proof in the case $n < 2(s+1)$ for the reader's convenience.
\begin{proof}
We decompose $u$ into regular part $u_\mathrm{reg}$ and singular part $u_\mathrm{sing}$
\[
	u = u_\mathrm{reg}-u_\mathrm{sing}, \quad
	u_\mathrm{reg} := e^{t\Delta} u, \quad
	u_\mathrm{sing} := \int_0^t \Delta e^{\tau\Delta}u\; d\tau
\]
based on the formula
\[
	e^{t\Delta}u - u = \int_0^t \frac{d}{d\tau} e^{\tau\Delta}u\; d\tau
	= \int_0^t \Delta e^{\tau\Delta}u\; d\tau.
\]
(This idea is standard to prove an interpolation inequality as in \cite[Chapter 6]{GGS}.)
 Since
\[
	\|u\|_{\dot{H}_\mathrm{av}^{-s}}^2 
	= \int_{\mathbb{T}^n} (-\Delta)^{-s} u\cdot u\; dx
	= \int_{\mathbb{T}^n} (-\Delta)^{-s} u\cdot u_\mathrm{reg}\; dx
	- \int_{\mathbb{T}^n} (-\Delta)^{-s} u\cdot u_\mathrm{sing}\; dx,
\]
we estimate each term separately.

For the regular part, by definition of $\dot{W}_\mathrm{av}^{-1,p}$ norm, we have
\[
	\left| \int_{\mathbb{T}^n} (-\Delta)^{-s} u\cdot u_\mathrm{reg}(t)\; dx \right|
	\leq \left\| (-\Delta)^{-s} u \right\|_{\dot{W}_\mathrm{av}^{-1,p}}
	\left\| \nabla u_\mathrm{reg}(t) \right\|_{L^{p'}}.
\]
Invoking $L^{p'}$-$L^1$ estimate for the heat semigroup, we have
\[
	\left\| \nabla u_\mathrm{reg}(t) \right\|_{L^{p'}}
	= \| \nabla e^{t\Delta}u \|_{L^{p'}}
	\leq C_1 t^{-\frac{n}{2}\left(1-\frac{1}{p'}\right)} TV(u)
	= C_1 t^{-\frac{n}{2p}} TV(u).
\]
Thus
\[
	\left| \int_{\mathbb{T}^n} (-\Delta)^{-s} u\cdot u_\mathrm{reg}(t)\; dx \right|
	\leq C_1 t^{-n/2p}	
	\left\| (-\Delta)^{-s} u \right\|_{\dot{W}_\mathrm{av}^{-1,p}}
	TV(u).
\]

The estimate for the singular part is more involved.
 We proceed
\begin{align*}
	\left| \int_{\mathbb{T}^n} (-\Delta)^{-s} u\cdot u_\mathrm{sing}(t)\; dx \right|
	&= \left| \int_{\mathbb{T}^n} (-\Delta)^{-s} u\cdot \int_0^t  \Delta e^{\tau\Delta} u\; d\tau dx \right| \\
	&\leq \int_0^t \left| \int_{\mathbb{T}^n} (-\Delta)^{-s/2} u(-\Delta)^{1-s/2} e^{\tau\Delta} u\; dx \right| d\tau \\
	&\leq \int_0^t \left\| (-\Delta)^{-s/2} u \right\|_{L^2}
	\left\|(-\Delta)^{1-s/2} e^{\tau\Delta} u\right\|_{L^2}\; d\tau \\
	&= \|u\|_{\dot{H}_\mathrm{av}^{-s}} \int_0^t \left\| (-\Delta)^{1-s/2} e^{\tau\Delta} u \right\|_{L^2}\; d\tau.
\end{align*}
We note that
\[
	(-\Delta)^{1-s/2} e^{\tau\Delta} u 
	= -\sum_{j=1}^n (-\Delta)^{-s/2} \partial_j e^{\tau\Delta/2} e^{\tau\Delta/2} \partial_j u. 
\]
We use $L^2$-$L^2$ estimate for the heat semigroup to get
\[
	\left\|(-\Delta)^{-s/2} \partial_j e^{\tau\Delta/2} \right\|_{L^2\to L^2} \leq C\tau^{(s-1)/2}.
\]
Here we invoked the assumption that $s\leq1$.
 (This can be easily proved by the Parseval identity.)
 Thus,
\[
	\left\|(-\Delta)^{1-s/2} e^{\tau\Delta} u \right\|_{L^2} \leq C\tau^{(s-1)/2}
	\sum_{j=1}^n \left\| e^{\tau\Delta/2} \partial_j u \right\|_{L^2}.
\]
Using $L^2$-$L^1$ estimate for the heat semigroup, we end up with
\begin{align*}
	\left\|(-\Delta)^{1-s/2} e^{\tau\Delta} u \right\|_{L^2} 
	&\leq C' \tau^{(s-1)/2} \tau^{-n/4}
	\sum_{j=1}^n \| \partial_j u \|_{L^1} \\
	&\leq C'' \tau^{(2s-2-n)/4} TV(u).
\end{align*}
(To be precise, we take an approximate sequence of average-free smooth functions $f_k$ on $\mathbb{T}^n$ so that $f_k\to u$ in $L^2$ and $TV(f_k)\to TV(u)$.)
 We thus conclude that
\begin{align*}
	\left| \int_{\mathbb{T}^n} (-\Delta)^{-s} u\cdot u_\mathrm{sing}(t)\; dx \right| 
	&\leq C'' \|u\|_{\dot{H}_\mathrm{av}^{-s}} \int_0^t \tau^{(2s-2-n)/4}\; d\tau \, TV(u) \\
	&= C_2 \|u\|_{\dot{H}_\mathrm{av}^{-s}} TV(u) t^{(2+2s-n)/4},\quad
	C_2 = \frac{4}{2s+2-n} C'' 
\end{align*}
since $(2s-2-n)/4>-1$ by our assumption $n<2(s+1)$.

Combining the estimate for the regular part and the singular part, we have
\begin{equation} \label{ETS}
	\|u\|_{\dot{H}_\mathrm{av}^{-s}}^2 \leq \left( C_1 t^{-n/2p} \left\|(-\Delta)^{-s}u\right\|_{\dot{W}_\mathrm{av}^{-1,p}}
	+ C_2 t^{(2+2s-n)/4} \|u\|_{\dot{H}_\mathrm{av}^{-s}} \right) TV(u).
\end{equation}
We take $t$ so that the two terms in the right-hand side are balanced, i.e.,
\[
	C_1 t^{-n/2p} \left\|(-\Delta)^s u \right\|_{\dot{W}_\mathrm{av}^{-1,p}} 
	= C_2 t^{(2+2s-n)/4} \|u\|_{\dot{H}_\mathrm{av}^{-s}},
\]
or
\[
	t^\beta = \frac{C_1 \left\|(-\Delta)^s u \right\|_{\dot{W}_\mathrm{av}^{-1,p}}}{C_2 \|u\|_{\dot{H}_\mathrm{av}^{-s}}}, \quad
	\beta = \frac{1+s}{2} - \frac{n}{4} + \frac{n}{2p}.
\]
We fix this $t$ and observe that \eqref{ETS} becomes
\[
	\|u\|_{\dot{H}_\mathrm{av}^{-s}}^2
	\leq 2C_1 \left( \frac{C_1\left\|(-\Delta)^{-s} u\right\|_{\dot{W}_\mathrm{av}^{-1,p}}}{C_2 \|u\|_{\dot{H}_\mathrm{av}^{-s}}} \right)^{-n/2p\beta}
	\left\|(-\Delta)^{-s} u\right\|_{\dot{W}_\mathrm{av}^{-1,p}} TV(u)
\]
or
\[
	\|u\|_{\dot{H}_\mathrm{av}^{-s}}^{2-(n/2p\beta)}
	\leq C_3 \left\|(-\Delta)^{-s} u\right\|_{\dot{W}_\mathrm{av}^{-1,p}}^{1-(n/2p\beta)} TV(u)
\]
with $C_3=2C_1^{1-(n/2p\beta)} C_2^{n/2p\beta}$.
 If we take
\[
	\theta := \left(2-\frac{n}{2p\beta}\right)^{-1},
\]
we see that
\[
	1 - \frac{n}{2p\beta} = \frac{1}{\theta} -1.
\]
Thus
\[
	\|u\|_{\dot{H}_\mathrm{av}^{-s}}
	\leq C_* \left\|(-\Delta)^{-s} u\right\|_{\dot{W}_\mathrm{av}^{-1,p}}^{1-\theta} TV(u)^\theta
\]
with $C_*=C_3^\theta$.
 The definition of $\theta=\left(2-\frac{n}{2p\beta}\right)^{-1}$ is nothing but the scaling balance.
 The proof is now complete.
\end{proof}

We next prove a mild growth of $\left\|u(t)\right\|_{\dot{W}_\mathrm{av}^{-1,p}}$ as $t$ grows.
 For $\mathbb{T}^n=\prod_{i=1}^m(\mathbb{R}/\omega_i\mathbb{Z})$, we set $|\mathbb{T}^n|=\omega_i\cdots\omega_n$, which is the volume of a fundamental domain.
\begin{lemma} \label{LMG}
Let $u$ be a solution to \eqref{ETVP} (with $a\equiv1$) with initial datum $u_0\in\dot{H}_\mathrm{av}^{-s}(\mathbb{T}^n)$.
 Then,
\[
	\left\|(-\Delta)^{-s} u(t)\right\|_{\dot{W}_\mathrm{av}^{-1,p}}
	\leq |\mathbb{T}^n|^{1/p}t 
	+ \left\|(-\Delta)^{-s} u_0\right\|_{\dot{W}_\mathrm{av}^{-1,p}}
\]
for $1\leq p \leq\infty$.
\end{lemma}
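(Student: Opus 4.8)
The plan is to test the $\dot H_\mathrm{av}^s$-valued identity $\tfrac{d}{dt}[(-\Delta)^{-s}u(t)] = \operatorname{div}Z(t)$ against a fixed competitor for the $\dot W_\mathrm{av}^{-1,p}$-norm and exploit the pointwise bound $|Z|\le1$. Concretely, since $u$ solves \eqref{ETVP} with $a\equiv1$, Theorem \ref{TCPer} gives $Z\in L^\infty(\mathbb{T}^n\times(0,\infty))$ with $|Z|\le1$ a.e.\ and $\operatorname{div}Z\in\bigcap_{\delta>0}L^2(\delta,\infty;\dot H_\mathrm{av}^{-s}(\mathbb{T}^n))$ such that $u_t=(-\Delta)^s\operatorname{div}Z$ in $\dot H_\mathrm{av}^{-s}(\mathbb{T}^n)$ for a.e.\ $t>0$. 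Applying the isometric isomorphism $(-\Delta)^{-s}\colon\dot H_\mathrm{av}^{-s}\to\dot H_\mathrm{av}^{s}$, which commutes with the Bochner derivative, this becomes $\tfrac{d}{dt}[(-\Delta)^{-s}u(t)]=\operatorname{div}Z(t)$ in $\dot H_\mathrm{av}^{s}(\mathbb{T}^n)$ for a.e.\ $t>0$, so that for $0<\delta<t$
\[
	(-\Delta)^{-s}u(t) = (-\Delta)^{-s}u(\delta) + \int_\delta^t \operatorname{div}Z(\tau)\,\dtau
\]
as a Bochner integral in $\dot H_\mathrm{av}^{s}(\mathbb{T}^n)$.

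Next I would fix $\varphi\in C_\mathrm{av}^1(\mathbb{T}^n)$ with $\|\nabla\varphi\|_{L^{p'}}\le1$. Since $\varphi\in\dot H_\mathrm{av}^{-s}(\mathbb{T}^n)=(\dot H_\mathrm{av}^{s})^*$, the functional $g\mapsto\langle g,\varphi\rangle$ is bounded on $\dot H_\mathrm{av}^{s}(\mathbb{T}^n)$, so it may be moved inside the integral, and $\langle\operatorname{div}Z(\tau),\varphi\rangle=-\int_{\mathbb{T}^n}Z(\tau)\cdot\nabla\varphi\,dx$ by integration by parts on the torus (legitimate as $Z\in L^\infty$ and $\varphi\in C^1$). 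By $|Z|\le1$ and Hölder's inequality with exponents $p',p$,
\[
	\left|\langle\operatorname{div}Z(\tau),\varphi\rangle\right| \le \|\nabla\varphi\|_{L^1} \le |\mathbb{T}^n|^{1/p}\|\nabla\varphi\|_{L^{p'}} \le |\mathbb{T}^n|^{1/p},
\]
hence $\langle(-\Delta)^{-s}u(t),\varphi\rangle\le\langle(-\Delta)^{-s}u(\delta),\varphi\rangle+|\mathbb{T}^n|^{1/p}(t-\delta)$. Letting $\delta\downarrow0$ and using $u\in C([0,\infty);\dot H_\mathrm{av}^{-s})$ (whence $(-\Delta)^{-s}u(\delta)\to(-\Delta)^{-s}u_0$ in $\dot H_\mathrm{av}^{s}$ and $\langle(-\Delta)^{-s}u(\delta),\varphi\rangle\to\langle(-\Delta)^{-s}u_0,\varphi\rangle\le\|(-\Delta)^{-s}u_0\|_{\dot W_\mathrm{av}^{-1,p}}$) yields $\langle(-\Delta)^{-s}u(t),\varphi\rangle\le\|(-\Delta)^{-s}u_0\|_{\dot W_\mathrm{av}^{-1,p}}+|\mathbb{T}^n|^{1/p}t$. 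Taking the supremum over all admissible $\varphi$ gives the asserted bound.

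The only genuinely delicate point is the fundamental-theorem-of-calculus step, i.e.\ that $t\mapsto(-\Delta)^{-s}u(t)$ is absolutely continuous with values in $\dot H_\mathrm{av}^{s}(\mathbb{T}^n)$ on $[\delta,\infty)$ and derivative $\operatorname{div}Z$; this follows from $u_t\in\bigcap_{\delta>0}L^2(\delta,\infty;\dot H_\mathrm{av}^{-s})$ in Theorem \ref{TPV} together with the boundedness of $(-\Delta)^{-s}$ on $\dot H_\mathrm{av}^{-s}$. (If $TV(u_0)<\infty$ one may take $\delta=0$ directly; otherwise the limiting argument above handles the general initial datum.) All remaining ingredients — the torus integration by parts and the Hölder estimate — are routine, so I do not expect any serious obstacle beyond bookkeeping.
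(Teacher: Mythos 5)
Your proof is correct and follows essentially the same route as the paper's: both hinge on the Cahn--Hoffman characterization $u_t=(-\Delta)^s\operatorname{div}Z$ with $|Z|\le1$ and the duality/H\"older bound $\left|\int_{\mathbb{T}^n}Z\cdot\nabla\varphi\,dx\right|\le|\mathbb{T}^n|^{1/p}\|\nabla\varphi\|_{L^{p'}}$. The only difference is bookkeeping: the paper differentiates the $\dot{W}_{\mathrm{av}}^{-1,p}$-norm in time via $\frac{d}{dt}\|v\|_X\le\|v_t\|_X$ and then integrates, whereas you integrate the equation first and test against a fixed competitor before taking the supremum -- an equivalent, if slightly more carefully justified, arrangement of the same steps.
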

\begin{proof}
We first observe that
\[
	\frac{d}{dt} \left\|(-\Delta)^{-s} u\right\|_{\dot{W}_\mathrm{av}^{-1,p}} (t)
	\leq \left\|(-\Delta)^{-s} u_t\right\|_{\dot{W}_\mathrm{av}^{-1,p}} (t)
\]
since $\frac{d}{dt}\|u\|_X\leq\|u_t\|_X$ by the triangle inequality of the norm $\|\cdot\|_X$.
 By Theorem \ref{TCPer}, we see
\[
	u_t = (-\Delta)^s \operatorname{div}Z
\]
with
\[
	\|Z\|_{L^{\infty}} \leq 1 \quad\text{and}\quad
	\left(u, -(-\Delta)^s \operatorname{div}Z \right)_{\dot{H}_\mathrm{av}^{-s}} = TV(u).
\]
Thus
\begin{multline*}
	\left\|(-\Delta)^{-s} u_t\right\|_{\dot{W}_\mathrm{av}^{-1,p}} (t)
	= \|\operatorname{div}Z\|_{\dot{W}_\mathrm{av}^{-1,p}} \\
	= \sup \left\{ \int_{\mathbb{T}^n} (-\nabla\varphi)\cdot Z\; dx \biggm|
	\|\nabla\varphi\|_{L^{p'}} \leq 1 \right\}
	\leq \|Z\|_{L^p} \leq |\mathbb{T}^n|^{1/p}.
\end{multline*}
We now conclude
\[
	\frac{d}{dt} \left\|(-\Delta)^{-s} u\right\|_{\dot{W}_\mathrm{av}^{-1,p}} (t)
	\leq |\mathbb{T}^n|^{1/p}
\]
which yields the desired inequality.
\end{proof}
We are now ready to prove our upper bound for the extinction time which is an easy extension of the case $s=1$.
\begin{thm} \label{TEEPer}
For $s\in(0,1]$, assume that $1\leq n\leq2(s+1)$, $1\leq p\leq\infty$.
 Assume that $1/2<\theta\leq1$ satisfies the scaling balance
\[
	s + \frac{n}{2} = (1-\theta) \left(2s+1+\frac{n}{p} \right)
	+ \theta(n-1).
\]
Then
\[
	T^*(u_0) \leq \frac{A_0}{a} \left\{ \left( 1+ \frac{aC_*^{1/\theta}\|u_0\|_{\dot{H}_\mathrm{av}^{-s}}^\gamma}{A_0^\gamma} \right)^{1/\gamma} -1 \right\}
\]
with $a:=|\mathbb{T}^n|^{1/p}$, $A_0:=\left\|(-\Delta)^{-s}u_0\right\|_{\dot{W}_\mathrm{av}^{-1,p}}$, $\gamma:=2-1/\theta$.
\end{thm}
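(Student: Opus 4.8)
The plan is to collapse the three preceding ingredients---Proposition \ref{PFF}, Lemma \ref{LInt}, and Lemma \ref{LMG}---into a single scalar differential inequality for $y(t) := \|u(t)\|_{\dot{H}_\mathrm{av}^{-s}(\mathbb{T}^n)}$ and then integrate it directly. Set $W(t) := \|(-\Delta)^{-s}u(t)\|_{\dot{W}_\mathrm{av}^{-1,p}}$ and $\gamma := 2 - 1/\theta$; from $1/2 < \theta \le 1$ one has $0 < \gamma \le 1$, and the exponent identities $(1-\theta)/\theta = 1 - \gamma$ and $\gamma - 2 + 1/\theta = 0$ will be used repeatedly. If $u_0 = 0$ there is nothing to prove, so I would assume $y_0 := y(0) > 0$; then also $A_0 := W(0) > 0$, since $\|\cdot\|_{\dot{W}_\mathrm{av}^{-1,p}}$ is a norm on average-free distributions and $(-\Delta)^{-s}$ is injective there. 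By Proposition \ref{PFF} the map $t \mapsto y(t)^2$ is non-increasing, so $\{t : y(t) > 0\}$ is an interval $(0, T^*)$ with $T^* = T^*(u_0)$, and on this interval it is locally absolutely continuous with $\tfrac{1}{2} (y^2)'(t) = -TV(u(t))$ for a.e.\ $t$ (here $TV(u(t)) < \infty$ for $t > 0$ because $u(t) \in D(\partial_{\dot{H}_\mathrm{av}^{-s}}TV)$). Rearranging Lemma \ref{LInt} into $TV(u) \ge C_*^{-1/\theta}\, y^{1/\theta}\, W^{-(1-\theta)/\theta}$ and differentiating $y^\gamma = (y^2)^{\gamma/2}$ then gives, for a.e.\ $t \in (0, T^*)$,
\[
	\frac{d}{dt}\, y(t)^\gamma = \gamma\, y(t)^{\gamma - 2} \cdot \tfrac{1}{2} (y^2)'(t) \le -\frac{\gamma}{C_*^{1/\theta}} \cdot \frac{y(t)^{\gamma - 2 + 1/\theta}}{W(t)^{1-\gamma}} = -\frac{\gamma}{C_*^{1/\theta}\, W(t)^{1-\gamma}} .
\]

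Next I would insert the growth bound $W(t) \le at + A_0$ from Lemma \ref{LMG}, with $a = |\mathbb{T}^n|^{1/p}$; this replaces the still-unknown $W(t)$ by an explicit function of $t$, so the resulting inequality can be integrated in closed form (the constant-coefficient comparison lemma of Section \ref{S5S1} no longer applies directly). Because $1 - \gamma \ge 0$ we get $W(t)^{1-\gamma} \le (at + A_0)^{1-\gamma}$, hence
\[
	\frac{d}{dt}\, y(t)^\gamma \le -\frac{\gamma}{C_*^{1/\theta}\, (at + A_0)^{1-\gamma}} \qquad \text{for a.e.\ } t \in (0, T^*) .
\]
Integrating this from $\varepsilon$ to $t$, letting $\varepsilon \downarrow 0$ (the integrand is bounded near $0$ since $A_0 > 0$, and $y$ is continuous), and using $\int_0^t (a\tau + A_0)^{\gamma - 1}\, d\tau = \frac{1}{a\gamma}\big((at + A_0)^\gamma - A_0^\gamma\big)$, I arrive at
\[
	y(t)^\gamma \le y_0^\gamma - \frac{(at + A_0)^\gamma - A_0^\gamma}{a\, C_*^{1/\theta}} \qquad (0 \le t < T^*) .
\]

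Since the left-hand side is non-negative while the right-hand side is strictly decreasing in $t$, the function $y$ must vanish by the time $t^*$ at which the right-hand side becomes zero, i.e.\ the solution of $(at + A_0)^\gamma = A_0^\gamma + a\, C_*^{1/\theta}\, y_0^\gamma$; solving for $t$ and factoring out $A_0$ yields exactly
\[
	T^*(u_0) \le t^* = \frac{A_0}{a}\left\{ \left( 1 + \frac{a\, C_*^{1/\theta}\, \|u_0\|_{\dot{H}_\mathrm{av}^{-s}}^\gamma}{A_0^\gamma} \right)^{1/\gamma} - 1 \right\} ,
\]
which is the assertion. In the borderline case $n = 2(s+1)$ one has $\theta = \gamma = 1$, the coefficient loses its $t$-dependence, and the bound degenerates to $T^*(u_0) \le C_*\, \|u_0\|_{\dot{H}_\mathrm{av}^{-s}}$, consistent with Proposition \ref{PExt}. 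I do not anticipate a genuine obstacle here: the analytic content is already carried by the interpolation inequality (Lemma \ref{LInt}) and the growth estimate (Lemma \ref{LMG}), and what remains is routine---verifying that $t \mapsto y(t)^2$ is locally absolutely continuous on $(0, T^*)$ with derivative given by Proposition \ref{PFF}, that $y$ is positive precisely on $(0, T^*)$ so that division by $y$ and by $W$ is legitimate, and that the exponents (notably $\gamma - 2 + 1/\theta = 0$) and the directions of the inequalities are tracked carefully.
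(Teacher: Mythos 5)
Your proposal is correct and follows essentially the same route as the paper: the dissipation identity of Proposition \ref{PFF}, the interpolation inequality of Lemma \ref{LInt}, and the growth bound of Lemma \ref{LMG} are combined into the differential inequality $\frac{d}{dt}y^\gamma \leq -\gamma C_*^{-1/\theta}(at+A_0)^{\gamma-1}$, which is then integrated explicitly. Your additional remarks on absolute continuity, positivity of $y$ and $A_0$, and the consistency check with Proposition \ref{PExt} in the borderline case $\theta=\gamma=1$ are accurate refinements of the same argument.
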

\begin{proof}
We set $y(t)=\left\|u(t)\right\|_{\dot{H}_\mathrm{av}^{-s}}$ and recall the fundamental dissipation identity
\[
	(y^2/2)' = -TV(u).
\]
By the interpolation inequality (Lemma \ref{LInt}), we have
\[
	-TV(u) \leq -C_*^{-1/\theta} y(t)^{1/\theta}
	\left\|(-\Delta)^{-s}u(t) \right\|_{\dot{W}_\mathrm{av}^{-1,p}}^{1-1/\theta}.
\]
We may assume $y(t)\neq0$.
 We now apply our growth estimate to obtain
\[
	y(t)^{1-1/\theta} y'(t) 
	\leq -C_*^{-1/\theta} \left(|\mathbb{T}^n|^{1/p}t + \left\|(-\Delta)^{-s}u_0 \right\|_{\dot{W}_\mathrm{av}^{-1,p}}\right)^{1-1/\theta}
\]
since $1-1/\theta\leq0$.
 In other words,
\[
	\frac{1}{\gamma} \frac{d}{dt} y^\gamma
	\leq -C_*^{1/\theta} (at+A_0)^{\gamma-1}.
\]
Note that $\gamma\in(0,1]$ since $\theta$ satisfies $1/2<\theta\leq1$.
 Integrating both sides over $(0,t)$ we get
\[
	\frac{1}{\gamma} \left(y(t)^\gamma - y(0)^\gamma\right)
	\leq - \frac{C_*^{-1/\theta}}{a\gamma} \left\{(at+A_0)^\gamma - A_0^\gamma \right\}
\]
or
\begin{equation} \label{EEy}
	y(t)^\gamma \leq \|u_0\|_{\dot{H}_\mathrm{av}^{-s}}^\gamma
	- \frac{C_*^{-1/\theta}}{a} \left\{(at+A_0)^\gamma - A_0^\gamma \right\}.
\end{equation}
Since the right-hand side is nonnegative,
\[
	\frac{C_*^{-1/\theta}}{a} \left\{(at+A_0)^\gamma - A_0^\gamma \right\}
	\leq \|u_0\|_{\dot{H}_\mathrm{av}^{-s}}^\gamma
\]
or
\[
	at+A \leq A_0 \left( 1+ \frac{aC_*^{1/\theta}\|u_0\|_{\dot{H}_\mathrm{av}^{-s}}^\gamma}{A_0^\gamma} \right)^{1/\gamma}.
\]
Thus, the desired estimate follows from \eqref{EEy}.
 (Note that if $\theta=1/2$, $\gamma$ must be zero so the above argument does not apply.
 The case $\theta=1/2$ corresponds to the case $1\leq n<2(s+1)$ and $p=\infty$ as we observed before.)
\end{proof}

{\BLUE
\section{Regularity} \label{S6} 

A fundamental feature of total variation flows, that sets them apart from usually considered quasilinear parabolic equations, is that $u(t)$ is typically only a $BV$ function for $t>0$. That is, the distributional derivative $D u$ is in general not an integrable function, but a vector measure. In particular, as we have seen in Section \ref{S4}, $u$ can have jump discontinuities along hypersurfaces. This is a desirable feature from the point of view of applications to image processing, where jumps corresponds to sharp contours in images. 

Rigorously speaking, a point $x \in \Omega$ is called an \emph{(approximate) jump point} of $w \in L^1_{loc}(\Omega)$ if there exist real numbers $w^- = w^-(x)$, $w^+=w^+(x)$, $w^- \neq w^+$ and a vector $\nu_w = \nu_w(x)$ such that 
\begin{equation} \label{jump_def}
\lim_{r \to 0^+} \fint_{B_r^-(x, \nu_w)}|w(y) - w^-| dy =0, \quad \lim_{r \to 0^+} \fint_{B_r^+(x, \nu_w)}|w(y) - w^+| dy =0 
\end{equation} 
where the symbol $\fint$ denotes average integral over a set and $B_r^\pm(x, \nu_w)$ are the half-balls
\[B_r^-(x, \nu_w) = \left\{y \in B_r(x) \bigm| (y-x)\cdot \nu_w \geq 0\right\}, \quad B_r^+(x, \nu_w) = \left\{y \in B_r(x) \bigm| (y-x)\cdot \nu_w \leq 0\right\}.   \]
To be precise, the triple $(w^+(x), w^-(x), \nu_w(x))$ is defined up to permutation of $w^+(x)$ and $w^-(x)$ with simultaneous multiplication of $\nu_w(x)$ by $-1$. We also recall the notion of approximate continuity, closely related to Lebesgue points. As in \cite{AFP}, we say that $x \in \Omega$ is a \emph{point of approximate continuity} of $w$, if there exists $w^*=w^*(x) \in \mathbb{R}$ such that
\begin{equation} \label{approx_cont}
\lim_{r \to 0^+} \fint_{B_r(x)}|w(y) - w^*| dy =0.   
\end{equation}
The set of jump points of $w$ is denoted by $J_w$, while the set of \emph{approximate discontinuity}, i.e., the complement of the set of points of approximate continuity of $w$, is denoted by $S_w$. Clearly $J_w \subset S_w$. By the Federer--Vol'pert theorem \cite[Theorem 3.78]{AFP}, if $w \in BV(\Omega)$, then $S_w$ (and $J_w$) is \emph{countably $\mathcal{H}^{n-1}$-rectifiable}, in particular, it can be covered by a countable sum of graphs of $C^1$ functions up to a $\mathcal{H}^{n-1}$-negligible set (i.e.~a set of $\mathcal{H}^{n-1}$ measure $0$) \cite[p.~80]{AFP}. Moreover, $J_w$ coincides with $S_w$ up to a $\mathcal{H}^{n-1}$-negligible set, that is $\mathcal{H}^{n-1}(S_w \setminus J_w) =0$. We also note that if \eqref{jump_def} holds for a given $x \in \Omega$ with $w^- = w^+$, then also \eqref{approx_cont} holds with $w^* = w^\pm$. On the other hand, if \eqref{approx_cont} holds with  $w^* \in \mathbb{R}$, then both equations in \eqref{jump_def} are satisfied with $w^+ = w^- = w^*$. Thus, if $w \in BV(\Omega)$, then $w^\pm(x)$ can be defined for $\mathcal{H}^{n-1}$-a.e.~$x \in \Omega$. Furthermore, one-sided traces of $w$ along any $C^1$ hypersurface in $\Omega$ are well defined \cite[Theorem 3.77]{AFP} and coincide with $w^\pm$ up to a pointwise permutation \cite[Remark 3.79]{AFP}. 

In this section we denote by $\nabla w$ the Radon--Nikodym derivative of $Dw$ with respect to the Lebesgue measure $\mathcal{L}^n$. Thus, we have 
\[ Dw = \nabla w \mathcal{L}^n + D^s w, \]
where $D^s w$ is called the singular part of $Dw$. The singular part can be further decomposed into the jump part $D^j w = (w^+-w^-)\nu_w \, \mathcal{H}^{n-1}$ supported on $J_w$ and the remaining Cantor part $D^c w$, see \cite{AFP} for details. 

It is then natural to ask about the location of the jumps of $u$, or more generally, of (the support of) $D^s u$. The situation is particularly simple in the 1D setting (in order to avoid technicalities related to the boundary, we restrict ourselves to the periodic case $\Omega = \mathbb{R}/\mathbb{Z}$). In this case $BV$ functions are well approximated by step functions. Indeed, given $w \in BV(\Omega)$ and a natural number $N$, we define $w^N$ by 
\[w^N(x) =  N \int_{(k-1)/N}^{k/N} w(y) dy  \text{ for } x \in [(k-1)/N, k/N), \ k = 1, \ldots, N. \]
If $w \in C^1(\overline{\Omega})$, then $w$ attains its average value $N \int_{(k-1)/N}^{k/N} w(y)dy$ on $[(k-1)/N, k/N)$ and, by the fundamental theorem of calculus, we have for $x \in [(k-1)/N, k/N)$
\begin{equation}\label{bv_avg_ineq}
|w(x) - w^N(x)| = \left|w(x) - N \int_{(k-1)/N}^{k/N} w(y)dy\right| \leq \int_{[(k-1)/N, k/N)} |D w|.
\end{equation} 
It is not difficult to see that the same holds for general $w \in BV(\Omega)$, $\Omega = (a,b)$, if we identify it with its left-continuous representative which is of form 
\[w(x) = c + \int_{(a,x)} Dw \]
with some $c \in \mathbb{R}$, see \cite[Thm.\ 3.28]{AFP}. From \eqref{bv_avg_ineq} we deduce 
\begin{multline*} 
\|w - w^N\|_{L^2(\Omega)}^2 = \sum_{k=1}^N \int_{(k-1)/N}^{k/N} \left|w(x) - N \int_{(k-1)/N}^{k/N} w(y)dy\right|^2dx \leq \frac{1}{N}\sum_{k=1}^N \left(\int_{[k-1)/N,k/N)} |Dw|\right)^2 \\ \leq \frac{1}{N}  \left( \sum_{k=1}^N \int_{[k-1)/N,k/N)} |Dw|\right)^2 = \frac{1}{N} TV(w)^2,
\end{multline*} 
which clearly tends to $0$ as $N \to \infty$, i.e.~$w^N \to w$ in $L^2(\Omega)$.  

Moreover, again identifying $w$ with the left-continuous representative,
\begin{multline*}TV(w^N) = \int_\Omega |D w^N| =  \sum_{k=1}^N \left|N\int_{k/N}^{(k+1)/N} w(y) dy - N\int_{(k-1)/N}^{k/N} w(y) dy\right| \\= N \sum_{k=1}^N \left|\int_{(k-1)/N}^{k/N} w(y + 1/N) - w(y) dy \right| = N \sum_{k=1}^N \left|\int_{(k-1)/N}^{k/N} \left(\int_{[y,y + 1/N)} Dw\right) dy\right| 
\\ \leq N \sum_{k=1}^N \int_{(k-1)/N}^{k/N} \left(\int_{[y,y + 1/N)} |Dw|\right) dy =  \int_\Omega N  \left(\int_{[y,y + 1/N)} |Dw|\right) dy = \int_\Omega |Dw|,   
\end{multline*}
where the intuitive last equality can be justified using Fubini's theorem. Thus, by lower semicontinuity of the total variation on any open subset of $\Omega$ and \cite[Prop.\ 1.80]{AFP}, measures $|Dw^N|$ converge weakly star to $|Dw|$. 

We apply this approximation to the initial datum $u_0$, obtaining step functions $u_0^N$. Let $u^N$ be the solution to the total variation flow with initial datum $u_0^N$. Then $u^N(t)$ is a step function for $t>0$. Precisely, between merging times, the solution is given by formulae \eqref{step_evo}--\eqref{theta_k}. We observe that $a^k$ is a decreasing (resp.\ increasing) function of time if and only if $a^k > a^{k-1}$ and $a^k > a^{k+1}$ (resp.\ $a^k < a^{k-1}$ and $a^k < a^{k+1}$). Thus, the functions $|a^{k+1}-a^k|$ are decreasing for $k=1,\ldots,m$. Since 
\[|Du^N(t)| = \sum_{k=1}^m |a^{k+1}(t)-a^k(t)|\delta_{x_k},\]
this implies $|Du^N(t)| \leq |D u^N_0|$ as measures, which is equivalent to saying that 
\begin{equation} \label{test_ineq}
 \int_\Omega \varphi \,d |Du^N(t)| \leq \int_\Omega \varphi \,d |Du^N_0| \quad \text{for any } \varphi \in C_c(\Omega), 
\end{equation} 
in other words $TV_\varphi(u^N(t))\leq TV_\varphi(u(t))$ for $\varphi \in C_c(\Omega)$. 

We now want to pass to the limit $N \to \infty$ in \eqref{test_ineq}. Since $u_0^N \to u_0$ in $L^2(\Omega)$, we have, by monotonicity of $-\partial TV$, $u^N(t) \to u(t)$ in $L^2(\Omega)$ for $t>0$. By lower semicontinuity of $TV_\varphi$, we obtain 
\[ \liminf_{N \to \infty} \int_\Omega \varphi \,d |Du^N(t)| \geq \int_\Omega \varphi \,d |Du(t)|.\]
On the other hand, we have showed that measures $|Du_0^N|$ converge weakly star to $|Du_0|$, i.e. 
\[\int_\Omega \varphi \,d |Du_0^N| \to \int_\Omega \varphi \,d |Du_0|.\] 
Thus, we have 
\begin{thm} \label{thm:Du_bound}
    Let $\Omega$ be an interval and suppose that $u$ is the solution to the total variation flow with initial datum $u_0 \in BV(\Omega)$. Then 
    \begin{equation}\label{meas_ineq} |Du(t)| \leq |Du_0| \quad \text{as measures for } t>0.  
    \end{equation} 
\end{thm}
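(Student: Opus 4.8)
The plan is to obtain \eqref{meas_ineq} by passing to the limit $N \to \infty$ in the inequality \eqref{test_ineq}, which has already been established above for the step-function approximations $u_0^N$ of $u_0$ and the associated solutions $u^N$. Fix a nonnegative $\varphi \in C_c(\Omega)$. Since $|Du(t)|$ and $|Du_0|$ are nonnegative Radon measures, the validity of $\int_\Omega \varphi \, d|Du(t)| \leq \int_\Omega \varphi \, d|Du_0|$ for all such $\varphi$ is equivalent to \eqref{meas_ineq}, so it suffices to prove each of these scalar inequalities.

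First I would record that the semigroup generated by $-\partial_{L^2} TV$ is non-expansive on $L^2(\Omega)$, a standard consequence of monotonicity of the subdifferential of a convex functional and part of the theory underlying Proposition \ref{PAb}; hence $u_0^N \to u_0$ in $L^2(\Omega)$ forces $u^N(t) \to u(t)$ in $L^2(\Omega)$ for every $t>0$. Next, since $\varphi$ is continuous and nonnegative, hence lower semicontinuous on $\Omega$, the functional $w \mapsto TV_\varphi(w)$ is lower semicontinuous on $L^2(\Omega)$ by the discussion in Section \ref{S2S1}, and it coincides with $\int_\Omega \varphi \, d|Dw|$ on $BV(\Omega)$. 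Therefore
\[
\int_\Omega \varphi \, d|Du(t)| = TV_\varphi(u(t)) \leq \liminf_{N \to \infty} TV_\varphi(u^N(t)) = \liminf_{N \to \infty} \int_\Omega \varphi \, d|Du^N(t)| \leq \liminf_{N \to \infty} \int_\Omega \varphi \, d|Du_0^N|,
\]
where the last step is \eqref{test_ineq} applied termwise. Since $|Du_0^N| \to |Du_0|$ weakly-star (as shown above, using \cite[Prop.\ 1.80]{AFP}) and $\varphi \in C_c(\Omega)$, the right-hand side equals $\int_\Omega \varphi \, d|Du_0|$, which gives the desired inequality.

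The only point that still needs care — and which I expect to be the main obstacle — is justifying \eqref{test_ineq} for \emph{all} $t>0$ and not merely up to the first merging time. Between consecutive merging times $u^N(t)$ keeps the form \eqref{step_evo}, and each jump height $|a^{k+1}(t)-a^k(t)|$ is non-increasing: by the sign rule \eqref{theta_k}, a plateau value strictly larger (resp.\ smaller) than both of its neighbours moves at constant speed toward them, while a monotone triple of plateaus has constant jumps. At a merging time two adjacent plateaus coalesce, the jump between them drops to zero, and the remaining jumps are unchanged, so neither the total mass of $|Du^N(t)| = \sum_k |a^{k+1}(t)-a^k(t)|\,\delta_{x_k}$ nor $\int_\Omega \varphi \, d|Du^N(t)|$ for a fixed nonnegative $\varphi$ can increase across such an instant. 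Since the number of plateaus is non-increasing and integer-valued, there are only finitely many merging times, and a short induction over the resulting intervals yields \eqref{test_ineq} for every $t>0$. With this in place, the limiting argument above completes the proof.
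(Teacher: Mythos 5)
Your proposal is correct and follows essentially the same route as the paper: establish \eqref{test_ineq} for the step-function approximations, use $L^2$-contractivity of the semigroup to get $u^N(t)\to u(t)$, then combine lower semicontinuity of $TV_\varphi$ with weak-star convergence of $|Du_0^N|$ to $|Du_0|$. Your extra care about merging times (jumps drop to zero and the remaining ones are unaffected, with finitely many such instants) is a valid elaboration of a point the paper states more briefly.
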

Inequality \eqref{meas_ineq} implies that $|\nabla u(t)| \leq |\nabla u_0|$ Lebesgue-almost everywhere in $\Omega$ and $|D^s u(t)| \leq |D^s u_0|$ as measures. This has quite strong consequences in terms of regularity. In particular, it implies that the total variation flow preserves Sobolev spaces $W^{1,p}(\Omega)$ for $p \in [1, \infty]$, or the space $SBV(\Omega)$ of functions $w \in BV(\Omega)$ such that $D^c w = 0$. Many of these consequences were derived by a technique similar to the one presented here in \cite{BF}. Independently, a generalization of Theorem \ref{thm:Du_bound} was obtained in \cite{BCNO} by a different technique. We note that one can also show that the 1D total variation flow  preserves second-order $BV$ regularity. That is, if $D u_0 \in BV(\Omega)$, then $D u(t) \in BV(\Omega)$ for $t>0$ \cite{MR}. This is the highest regularity preserved by the flow: in fact, if $u_0 \in C^\infty(\overline{\Omega})$ is non-monotone, then $Du(t)$ will have jump discontinuities for any $t>0$ small enough, see e.g.\ \cite{KMR}.  

Theorem \ref{thm:Du_bound} can also be generalized to the vector-valued setting, with total variation calculated with respect to possibly non-Euclidean norms \cite{GiL, GrL}. If the total variation is replaced by a non-homogeneous functional, the pointwise inequality $|\nabla u(t)| \leq |\nabla u_0|$ in general fails. However one can still obtain a bound on $D^s u$ \cite{MS, GrL}. 

As for the total variation flow in higher-dimensional domains, the pointwise estimate on $\nabla u$ is known to be violated, as evidenced by examples of \emph{facet bending} \cite{ACC}. However one can still show an estimate on the size of jumps of $u(t)$ in terms of $u_0$. Known results of this type rely on the minimizing movements approximation briefly discussed in Section \ref{S2S1}, consisting in iteratively solving a minimization problem for the functional $\mathcal{E}^\lambda_f$ given by \eqref{min_mov}, which in the case $\mathcal{E}=TV$ reads 
\begin{equation} \label{rof_min} 
TV^\lambda_f(w) = \lambda TV(w) + \frac{1}{2}\int_\Omega (w-f)^2 dx.
\end{equation} 
Initial works on this subject used an argument based on the fact that level sets of the minimizer of $TV^\lambda_f$ solve a prescribed mean curvature problem \cite{CCN, CJN}. This has been later significantly generalized in \cite{Val, CL} by different techniques, without reference to particular structure of $\mathcal{E}$ or level sets of the minimizer, thus allowing to handle also the vector-valued case. In fact, it is enough to assume that $\mathcal{E}$ has a mild regularity property of \emph{differentiability along inner variations}, which holds for $TV$ \cite[Chapter 10]{Giu}. That is, setting $w_\varphi^\tau(x) = w(x + \tau \varphi(x))$ for $w \in BV(\Omega)$, $\varphi \in C^\infty_c(\Omega)^n$, $x \in \Omega$, $\tau \in \mathbb{R}$, the real function $R_\varphi\colon\tau \mapsto TV(w_\varphi^\tau)$ is differentiable for any $\varphi \in C^\infty_c(\Omega)^n$. To obtain the desired assertion, we actually need to use a bit more complicated mixed variations $w_\varphi^{\tau, \vartheta} := (1 - \vartheta) w + \vartheta w_\varphi^\tau$. By convexity of $TV$ we have 
\[\tfrac{1}{\tau}(TV(w_\varphi^{\tau, \vartheta})-TV(w)) \leq \tfrac{1}{\tau}((1-\vartheta)TV(w) + \vartheta TV(w_\varphi^\tau)- TV(w)) = \tfrac{\vartheta}{\tau}(R_\varphi(\tau)- R_\varphi(0))\to \vartheta R_\varphi'(0), \]
\[\tfrac{1}{\tau}(TV(w_\varphi^{-\tau, \vartheta})-TV(w)) \leq \tfrac{1}{\tau}((1-\vartheta)TV(w) + \vartheta TV(w_\varphi^{-\tau})- TV(w)) = \tfrac{\vartheta}{\tau}(R_\varphi(-\tau)- R_\varphi(0))\to -\vartheta R_\varphi'(0) \]
as $\tau \to 0^+$. Thus, 
\begin{equation} \label{mixed_var_ineq} 
\limsup_{\tau \to 0^+} \tfrac{1}{\tau}(TV(w_\varphi^{\tau, \vartheta})-TV(w)) + \limsup_{\tau \to 0^+} \tfrac{1}{\tau}(TV(w_\varphi^{-\tau, \vartheta})-TV(w)) \leq 0.
\end{equation} 
Since the function $\tau \to TV(w_\varphi^{\tau, \vartheta})$ is convex, it actually follows from this inequality that it is also differentiable (at least at $0$). 

Now, let $v \in BV(\Omega)$ be a minimizer of $TV_f^\lambda$ for a given $f \in BV(\Omega)$, $\lambda >0$. We assume moreover that $f \in L^\infty(\Omega)$, in which case it is easy to show that $v \in L^\infty(\Omega)$ and $\|v\|_{L^\infty(\Omega)} \leq \|f\|_{L^\infty(\Omega)}$. Since $v$ is a minimizer, we have
\[\liminf_{\tau \to 0^+} \tfrac{1}{\tau}(TV^\lambda_f(v_\varphi^{\tau, \vartheta})-TV^\lambda_f(v)) \geq 0, \quad \liminf_{\tau \to 0^+} \tfrac{1}{\tau}(TV^\lambda_f(v_\varphi^{-\tau, \vartheta})-TV^\lambda_f(v)) \geq 0. \]
Adding the two inequalities together and taking into account \eqref{mixed_var_ineq}, we deduce 
\begin{equation} \label{fid_ineq}
0 \leq \limsup_{\tau \to 0^+} \frac{1}{2 \tau}\int_\Omega (v_\varphi^{\tau, \vartheta} - f)^2  - (v - f)^2dx + \limsup_{\tau \to 0^+} \frac{1}{2 \tau}\int_\Omega (v_\varphi^{-\tau, \vartheta} - f)^2 - (v - f)^2 dx. 
\end{equation} 
Further on, we will only use \eqref{fid_ineq}. Note that it does not involve $TV$ at all. We calculate 
\[(v_\varphi^{\pm\tau, \vartheta} - f)^2 - (v - f)^2 = (v_\varphi^{\pm\tau, \vartheta} - v)(v + v_\varphi^{\pm\tau, \vartheta}  - 2f) = \vartheta (v_\varphi^{\pm\tau} - v)((2-\vartheta) v + \vartheta v_\varphi^{\pm\tau} - 2f).\]
Thus, dividing \eqref{fid_ineq} by $\vartheta$,  
\begin{equation}\label{fid_ineq2} 
0 \leq \limsup_{\tau \to 0^+} \frac{1}{2 \tau}\int_\Omega (v_\varphi^{\tau} - v)((2-\vartheta) v + \vartheta v_\varphi^{\tau} - 2f)dx + \limsup_{\tau \to 0^+} \frac{1}{2 \tau}\int_\Omega (v_\varphi^{-\tau} - v)((2-\vartheta) v + \vartheta v_\varphi^{-\tau} - 2f)dx. 
\end{equation} 

All the calculations so far were done for arbitrary $\varphi \in C_c^\infty(\Omega)$. Now let $\Gamma \subset \Omega$ be any one of the $C^1$ graphs that cover $J_v$ by rectifiability, let $x_0 \in \Gamma$, and let $\nu_0$ be a vector normal to $\Gamma$ at $x_0$. By isometric change of coordinates, we can assume without loss of generality that $x_0=0$, $\nu_0=(0,\ldots,0,1)$ and $\Gamma \supset \{ (x', x_n) \colon x' \in B^{n-1}_r,\ x_n = \gamma(x')\}$ where $\gamma \in C^1(B^{n-1}_r)$ and $B^{n-1}_r := \{x' \in \mathbb{R}^{n-1} \colon |x'| < r\}$ for $r>0$. Owing to differentiability of $\gamma$, possibly decreasing $r$ we can assume that $\gamma(B^{n-1}_s) \subset (-s/2,s/2)$ for $0<s\leq r$. Then, we take $\varphi = \nu_0 \psi = (0, \ldots,0, \psi)$ supported in $Q_r := B^{n-1}_r \times (-r,r)$ with $\psi \in C_c^\infty(Q_r)$ such that $0\leq \psi \leq 1$ and $\psi(x', x_n) = \psi_\parallel(x') \psi_\perp(x_n)$, $\psi_\parallel=1$ on $B^{n-1}_{r-\varepsilon}$, $\psi_\perp = 1$ on $(-r+\varepsilon, r-\varepsilon)$ for a temporarily fixed $\varepsilon \in (0, r/2)$.  

For $\tau \in (0, r/2)$ we rewrite 
\begin{multline}\label{fid_split}  
\frac{1}{2 \tau}\int_\Omega (v_\varphi^{\tau} - v)((2-\vartheta) v + \vartheta v_\varphi^{\tau} - 2f)dx = \frac{1}{2 \tau}\int_{B^{n-1}_r}\int_{\gamma(x') - \tau \psi_\parallel(x')}^{\gamma(x')} (v_\varphi^{\tau} - v)((2-\vartheta) v + \vartheta v_\varphi^{\tau} - 2f)dx_n dx'
 \\+ \frac{1}{2 \tau} \int_{B^{n-1}_r}\int_{(-r,r)\setminus[\gamma(x') - \tau \psi_\parallel(x'),\gamma(x')]} (v_\varphi^{\tau} - v)((2-\vartheta) v + \vartheta v_\varphi^{\tau} - 2f) dx_n dx' .
  \end{multline} 
  Identifying $v$ with its precise representative, the slicing properties of $BV$ functions~\cite[\S 3.11, Theorem 3.107]{AFP} ensure that for $\mathcal{L}^{n-1}$-a.e.~$x' \in B^{n-1}_r$ the function $v_{x'} \colon x_n\mapsto v(x',x_n)$ is in $BV((-r,r))$, and we can write for 
  $\mathcal{L}^1$-a.e.~$x_n \in (- r, r)$:
  \[v_\varphi^{\tau}(x',x_n) - v(x', x_n) = v(x',x_n + \tau \psi(x',x_n)) - v(x', x_n) =  \int_{(x_n, x_n + \tau \psi(x',x_n))} Dv_{x'} .\]
  Since the support of $\psi$ is contained in $Q_r$, for small enough $\tau>0$ we have $x_n + \tau \psi(x',x_n) \leq \min(x_n + \tau \psi_\parallel(x'),r)$ for $x
_n \in (-r,r)$. Therefore
    \begin{multline*}\left|(v_\psi^{\tau} - v)((2-\vartheta) v + \vartheta v_\varphi^{\tau} - 2f)\right|(x', x_n) 
      \leq  4 \|f\|_{L^\infty(\Omega)} \left|v_\psi^{\tau} - v\right|(x', x_n) \\ 
         \leq 4 \|f\|_{L^\infty(\Omega)} \int_{(x_n, x_n + \tau \psi(x',x_n))} |Dv_{x'}|  \leq 4 \|f\|_{L^\infty(\Omega)} \int_{(x_n, \min(x_n + \tau \psi_\parallel(x'),r))} |Dv_{x'}| 
\end{multline*}
and so, by Fubini's theorem, for $\mathcal{L}^{n-1}$-a.e.~$x' \in B^{n-1}_r$, 
 \begin{multline} \label{fid_fub} 
		\frac{1}{\tau}\int_{(-r,r)\setminus[\gamma(x') - \tau \psi_\parallel(x')),\gamma(x')]}\left|(v_\psi^{\tau} - v)((2-\vartheta) v + \vartheta v_\varphi^{\tau} - 2f)\right|(x', x_n)\, d x_n
  \\  \leq 4 \|f\|_{L^\infty(\Omega)} \frac{1}{\tau} \int\int \mathbf{1}_{(-r,r)\setminus[\gamma(x') - \tau \psi_\parallel(x')),\gamma(x')]}(x_n) \mathbf{1}_{(x_n,\min(x_n + \tau \psi_\parallel(x'),r))}(s)\, d \big|Dv_{x'}\big|(s)\, d x_n 
  \\ \leq 4\|f\|_{L^\infty(\Omega)} \frac{1}{\tau} \int\int \mathbf{1}_{(s - \tau \psi_\parallel(x'),s))}(x_n) \mathbf{1}_{(-r,r)\setminus\{\gamma(x')\}}(s) \, d x_n \, d \big|Dv_{x'}\big|(s) \\ \leq 4\|f\|_{L^\infty(\Omega)} \psi_\parallel(x') \int_{(-r,r)\setminus\{\gamma(x')\}} |Dv_{x'}| \leq 4\|f\|_{L^\infty(\Omega)} \int_{(-r,r)\setminus\{\gamma(x')\}} |Dv_{x'}|.
	\end{multline}
Appealing to \cite[Theorem 3.107]{AFP},
\begin{equation} \label{fid_rem_est}
\frac{1}{2 \tau} \int_{B^{m-1}_r}\int_{(-r,r)\setminus[\gamma(x') - \tau \psi_\parallel(x'),\gamma(x')]} (v_\varphi^{\tau} - v)((2-\vartheta) v + \vartheta v_\varphi^{\tau} - 2f) dx_n dx' \leq 2 \|f\|_{L^\infty(\Omega)}\int_{Q_r \setminus \Gamma} |Dv|.
\end{equation}
Thus we have estimated the second term on the r.h.s.~of \eqref{fid_split}. 

As for the other one, using \cite[Theorem 3.108]{AFP}, for $\mathcal{L}^{n-1}$-a.e.\ $x' \in B^{m-1}_r$ we have 
  \begin{equation*}
    \frac{1}{\tau}\int_{\gamma(x') - \tau \psi_\parallel(x')}^{\gamma(x')} (v_\psi^{\tau} - v)((2-\vartheta) v + \vartheta v_\varphi^{\tau} - 2f)dx_n \to \psi\,(v^+ - v^-)((2-\vartheta) v^- + \vartheta v^+ - 2f^-)\bigg|_{(x',\gamma(x'))}, 
  \end{equation*} 
  where $v^-$, $f^-$ (resp.~$v^+$, $f^+$) are the approximate limits corresponding to traces of $v$, $f$ along $\Gamma$ "from below" (resp.~"from above"). By a rough estimate in the vein of \eqref{fid_fub}, we can show that  
\[\frac{1}{\tau}\int_{\gamma(x') - \tau \psi_\parallel(x')}^{\gamma(x')}\left|(v_\psi^{\tau} - v)((2-\vartheta) v + \vartheta v_\varphi^{\tau} - 2f)\right|(x', x_n)\, d x_n
  \leq  4\|f\|_{L^\infty(\Omega)} \int_{(-r,r)} |Dv_{x'}|.\]
Since the r.h.s.~is an integrable function of $x'$ (again by \cite[Theorem 3.107]{AFP}), we can apply dominated convergence theorem to show that the first term on the r.h.s.~of \eqref{fid_split} converges as $\tau \to 0^+$ to   
\begin{equation*}
    \frac{1}{2} \int_{B_r^{n-1}}\psi\,(v^+ - v^-)((2-\vartheta) v^- + \vartheta v^+ - 2f^-)\bigg|_{(x',\gamma(x'))}dx'. 
  \end{equation*} 
  Thus, recalling \eqref{fid_rem_est}, 
\begin{multline*} 
\limsup_{\tau \to 0^+} \frac{1}{2 \tau}\int_\Omega (v_\varphi^{\tau} - v)((2-\vartheta) v + \vartheta v_\varphi^{\tau} - 2f)dx \\ \leq \frac{1}{2} \int_{B_r^{n-1}}\psi\,(v^+ - v^-)((2-\vartheta) v^- + \vartheta v^+ - 2f^-)\bigg|_{(x',\gamma(x'))}dx' + 2 \|f\|_{L^\infty(\Omega)}\int_{Q_r \setminus \Gamma} |Dv|.
\end{multline*} 
Repeating the same reasoning, we also obtain 
\begin{multline*} 
\limsup_{\tau \to 0^+} \frac{1}{2 \tau}\int_\Omega (v_\varphi^{-\tau} - v)((2-\vartheta) v + \vartheta v_\varphi^{-\tau} - 2f)dx \\ \leq \frac{1}{2} \int_{B_r^{n-1}}\psi\,(v^- - v^+)((2-\vartheta) v^+ + \vartheta v^- - 2f^+)\bigg|_{(x',\gamma(x'))}dx' + 2 \|f\|_{L^\infty(\Omega)}\int_{Q_r \setminus \Gamma} |Dv|.
\end{multline*}
Summing these two inequalities, recalling \eqref{fid_ineq2}, and passing with $\vartheta \to 0^+$, $\varepsilon \to 0^+$, 
\[
0 \leq \int_{B_r^{n-1}}(v^+ - v^-)(v^- - v^+ + f^+ - f^-)\bigg|_{(x',\gamma(x'))}dx' + 4 \|f\|_{L^\infty(\Omega)}\int_{Q_r \setminus \Gamma} |Dv|.
\]
Finally, we divide the obtained inequality by $|B_r^{n-1}|$ and pass to the limit $r \to 0^+$. By \cite[eq.~(2.41) on p.~79]{AFP}, the second term vanishes in the limit for $\mathcal{H}^{n-1}$-a.e.~$x_0 \in \Gamma$. Here $\mathcal{H}^{n-1}$ denotes the $n-1$-dimensional Hausdorff measure, whose restriction to $\Gamma$ corresponds to the classical surface measure on $\Gamma$. Since also $\mathcal{H}^{n-1}$-a.e.~$x_0 \in \Gamma$ is a Lebesgue point of the function $(v^+ - v^-)(v^- - v^+ + f^+ - f^-)$ with respect to the surface measure, we obtain 
\[0 \leq (v^+ - v^-)(v^- - v^+ + f^+ - f^-)\]
for $\mathcal{H}^{n-1}$-a.e.~$x_0 \in \Gamma$ (see \cite{CL} for a more detailed explanation of this part). 
We deduce 
\[|v^+ - v^-| \leq |f^+ - f^-| \quad \text{for } \mathcal{H}^{n-1}\text{-a.e. } x_0 \in J_v.\]
In particular, $J_v$ is contained in $J_f$ up to a $\mathcal{H}^{n-1}$-negligible set. 

Applying this to the minimizing movements approximation, if $u_0 \in BV(\Omega) \cap L^\infty(\Omega)$, we have $J_{u^N(t)} \subset J_{u_0}$ for $t >0$, $N \in \mathbb{N}$ up to a $\mathcal{H}^{n-1}$-negligible set and 
\begin{equation} \label{min_mov_est}  
|u^N(t)^+ - u^N(t)^-| \leq |u_0^+ - u_0^-| \quad \text{for } \mathcal{H}^{n-1}\text{-a.e. } x_0 \in J_{u_0}.
\end{equation} 
Recall that for $v \in BV(\Omega)$, the functions $v^\pm$ can be defined $\mathcal{H}^{n-1}$-a.e.~in $\Omega$ by setting $v^+=v^-$ to be the approximate limit of $v$ at any point where it exists. Indeed, by the Federer--Vol'pert theorem, $\mathcal{H}^{n-1}$-a.e.~point in $\Omega \setminus J_v$ is a point of approximate continuity of $v$ \cite{AFP}. 

It remains to transfer the result to the total variation flow. We will follow the approach from \cite{CCN,CJN} relying on the theory of accretive operators on Banach spaces, see Appendix A in \cite{ACM} and references therein. We will focus on some details related to limit passage with the bound on jump size \eqref{min_mov_est} which seem to be omitted in \cite{CCN,CJN}. We define $A_\infty$ as the restriction of $\partial_{L^2} TV$ to $L^\infty(\Omega)$. Using the characterization of $\partial_{L^2} TV$ in terms of Cahn--Hoffman vector fields (Theorem \ref{TSub}), it is not difficult to see that it is \emph{accretive}, i.e.,
\[\|v_1 -v_2\|_{L^\infty(\Omega)} \leq \|v_1 -v_2 + \lambda(w_1 - w_2)\|_{L^\infty(\Omega)} \quad \text{for } v_1, v_2 \in D(A_\infty),\ w_1 \in A_\infty(v_1),\  w_2 \in A_\infty(v_2),\ \lambda >0,\]
where $D(A_\infty)$ denotes the set of $v \in L^\infty(\Omega)$ such that $A_\infty(v)$ is non-empty. One can show it by approximating $\infty$ with finite $p$, using convexity of the power function and integrating by parts. Moreover, $A_\infty$ satisfies the \emph{range condition}  
\[\overline{D(A_\infty)} \subset R(I + \lambda A_\infty) \quad \text{for all } \lambda >0,\]
where $R(I + \lambda A_\infty)$ is the range of the operator $I + \lambda A_\infty$. In fact $R(I + \lambda A_\infty) = L^\infty(\Omega)$ since minimizers of \eqref{min_mov} with $f\in L^\infty(\Omega)$ belong to $L^\infty(\Omega)$. Thus, by the Crandall--Liggett theorem \cite[Theorem A.28]{ACM}, if $u_0 \in \overline{D(A_\infty)}$,
the minimizing movements approximation $u^N(t)$ defined in \eqref{min_mov_approx} converges to $u(t)$ in $L^\infty(\Omega)$ for $t > 0$. 

Recall that $u^N$ is piecewise constant as a function from $[0, \infty)$ to $L^2(\Omega)$, in particular its image is countable. Thus, there is a $\mathcal{H}^{n-1}$-full subset $J$ of $J_{u_0}$ whose elements are either jump points or points of approximate continuity of $u^N(t)$ for all $N \in \mathbb{N}$, $t>0$. Moreover, since the jump sets of $u^N$ are $\mathcal{H}^{n-1}$-almost contained in $J_{u_0}$, they can all be $\mathcal{H}^{n-1}$-almost covered by the same countable family of $C^1$ surfaces. Owing to this, we can assume that in case $x \in J$ is a jump point for $u^N(t)$, $N \in \mathbb{N}$, $t>0$, then $\nu_{u^N(t)}(x) = \nu_{u_0}(x)$. Therefore we have 
\begin{equation} \label{J_lim}
\lim_{r \to 0^+} \fint_{B^{\pm}_r(x, \nu_{u_0})} |u^N(t,y) - u^N(t)^\pm(x)|dy = 0
\end{equation} 
for all $x \in J$, $N \in \mathbb{N}$, $t>0$. By the uniform bound $\|u^N(t)\|_{L^\infty(\Omega)} \leq \|u_0\|_{L^\infty(\Omega)}$, the sequences $\{u^N(t)^\pm(x)\}_{N \in \mathbb{N}}$ are bounded for $x \in J$ and thus each one has a limit point $a^\pm$. Given $\varepsilon >0$, 
\begin{multline} \label{jump_triangle}
\fint_{B^{\pm}_r(x, \nu_{u_0})} |u(t,y) - a^\pm|dy \\ \leq \fint_{B^{\pm}_r(x, \nu_{u_0})} |u(t,y) - u^N(t,y)|dy + \fint_{B^{\pm}_r(x, \nu_{u_0})} |u^N(t,y) - u^N(t)^\pm(x)|dy + |u^N(t)^\pm(x) - a^\pm| \\ \leq \fint_{B^{\pm}_r(x, \nu_{u_0})} |u^N(t,y) - u^N(t)^\pm(x)|dy + 2 \varepsilon
\end{multline} 
for $N$ large enough, independently of $r>0$. As $\varepsilon >0$ is arbitrarily small, we deduce from \eqref{J_lim} 
\begin{equation*}
\lim_{r \to 0^+} \fint_{B^{\pm}_r(x, \nu_{u_0})} |u(t,y) - a^\pm|dy = 0, 
\end{equation*} 
hence $x \in J_{u(t)} \cup (\Omega \setminus S_{u(t)})$ and, by \eqref{min_mov_est},  
\[ |u^+(t) - u^-(t)| = |a^+ - a^-| \leq |u_0^+ - u_0^-|.\] 
By an estimate similar to \eqref{jump_triangle}, using the inclusion $\mathcal{H}^{n-1}$-almost inclusion $S_{u^N(t)} \subset S_{u_0}$, we also show that $S_{u(t)} \subset S_{u_0}$ (equivalently, $J_{u(t)} \subset J_{u_0}$) up to a $\mathcal{H}^{n-1}$-negligible set. 

Finally, we would like to remove the enigmatic assumption $u_0 \in \overline{D(A_\infty)}$. Instead, let us only assume that $u_0 \in BV(\Omega) \cap L^n(\Omega)$. By the $L^n$-$L^\infty$ regularization property \cite[Theorem 2.16]{ACM} and the pointwise estimate $|u_t(t)| \leq 2 \frac{|u(s)|}{t-s}$ for $0 \leq s < t$ following from homogeneity of $TV$ \cite[(2.34)]{ACM}, we have $u(t) \in  D(A_\infty)$ for $t>0$. Thus, by the previous step, $J_{u(t)} \subset J_{u(s)}$ and the inequality $|u^+(t) - u^-(t)| \leq |u^+(s) - u^-(s)|$ holds $\mathcal{H}^{n-1}$-a.e.~in $\Omega$ for $0 < s < t$. Let us take a sequence of positive numbers $s_k$ such that $s_k \to 0$ as $k \to \infty$. By lower semicontinuity of $TV$ and its monotonicity along trajectories of the flow, 
\[\lim_{k\to \infty} \int_\Omega |D u(s_k)| = \int_\Omega |D u_0|. \]
This improves the convergence $u(s_k) \to u_0$ in $L^2(\Omega)$ to strict convergence in $BV(\Omega)$, whence $|Du(s_k)|$ converges to $|Du_0|$ weakly* in the space of signed Radon measures $M(\Omega)$ \cite[Proposition 1.80]{AFP}. Thus, using \cite[Proposition 1.63]{AFP}, given $x \in \Omega$, 
\[ \int_{B_r(x)} |Du(s_k)| \to \int_{B_r(x)} |Du_0| \]
for a.e.~$r>0$ such that $B_r(x) \subset \Omega$. For $\mathcal{H}^{n-1}$-a.e.~$x \in J_{u(t)}$, 
\begin{multline} \label{jump_Dusk_ineq} \int_{B_r(x) \cap J} |u^+(t,y)-u^-(t,y)| d\mathcal{H}^{n-1}(y) \leq \int_{B_r(x)} |u^+(t,y)-u^-(t,y)| d\mathcal{H}^{n-1}(y)\\ \leq \int_{B_r(x)} |u^+(s_k,y)-u^-(s_k,y)| d\mathcal{H}^{n-1}(y) \leq \int_{B_r(x)} |Du(s_k)| ,
\end{multline} 
where $J$ is one of the $C^1$ graphs covering $J_{u(t)}$ by rectifiability such that $x \in J$. Dividing both sides of \eqref{jump_Dusk_ineq} by $|B_r^{n-1}|$, we deduce
\begin{equation} \label{jump_Du0_est} |u^+(t,x)-u^-(t,x)| \leq \liminf_{r \to 0^+} \frac{1}{|B_r^{n-1}|} \int_{B_r(x)} |Du_0| \quad \text{for } \mathcal{H}^{n-1}\text{-a.e.}\ x \in J_{u(t)}.
\end{equation} 
Thus, by \cite[Proposition 3.92]{AFP}, we conclude that $J_{u(t)} \subset J_{u_0}$ up to a $\mathcal{H}^{n-1}$-negligible set. Then, using the Radon--Nikodym derivation theorem, we also deduce that $|u^+(t)-u^-(t)| \leq |u_0^+ - u_0^-|$ $\mathcal{H}^{n-1}$-a.e.~on $J_{u(t)}$. Summing up, we have obtained 
\begin{thm} \label{jump_bound} 
Let $u_0 \in BV(\Omega) \cap L^n(\Omega)$. Then for a.e.~$t>0$, $J_{u(t)} \subset J_{u_0}$ up to a $\mathcal{H}^{n-1}$-negligible set and 
\[ |u^+(t)-u^-(t)| \leq |u_0^+ - u_0^-| \quad \mathcal{H}^{n-1}\text{-a.e.~on } J_{u(t)}.\]
\end{thm}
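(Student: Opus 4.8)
The plan is to transfer a jump-size estimate from the time-discretized (minimizing movements) scheme to the flow, using complete accretivity of $A_\infty := \partial_{L^2}TV|_{L^\infty}$. The heart of the matter is the one-step bound: if $v$ minimizes the Rudin--Osher--Fatemi functional $TV^\lambda_f$ for $f \in BV(\Omega) \cap L^\infty(\Omega)$, then $|v^+ - v^-| \leq |f^+ - f^-|$ $\mathcal{H}^{n-1}$-a.e.\ on $J_v$, and in particular $J_v \subset J_f$ up to an $\mathcal{H}^{n-1}$-negligible set. To get this I would test minimality of $v$ against the mixed competitors $v_\varphi^{\tau,\vartheta} = (1-\vartheta)v + \vartheta v_\varphi^\tau$; differentiability of $\tau \mapsto TV(v_\varphi^\tau)$ along inner variations (valid for $TV$ by \cite[Chapter 10]{Giu}) together with convexity of $TV$ eliminates the $TV$ contribution, leaving an inequality such as \eqref{fid_ineq2} involving only the fidelity term. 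Choosing $\varphi = \nu_0 \psi$ localized near one of the $C^1$ graphs $\Gamma$ covering $J_v$ by rectifiability, I would split the fidelity integral into a thin layer across $\Gamma$ and a remainder: the remainder is controlled by $\|f\|_{L^\infty(\Omega)}\int_{Q_r\setminus\Gamma}|Dv|$ through the BV slicing theorems \cite[Theorems 3.107, 3.108]{AFP} and Fubini, while the layer term converges, by one-dimensional trace theory, to the surface integrand $(v^+-v^-)((2-\vartheta)v^- + \vartheta v^+ - 2f^-)$ evaluated on $\Gamma$. Letting $\vartheta \to 0^+$, then $\varepsilon \to 0^+$, dividing by $|B_r^{n-1}|$ and sending $r \to 0^+$ at a point that is both a Lebesgue point of the surface integrand and a point where $|B_r^{n-1}|^{-1}\int_{Q_r\setminus\Gamma}|Dv| \to 0$, I extract $0 \leq (v^+-v^-)(v^- - v^+ + f^+ - f^-)$, which gives the claim.

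Next I would iterate this estimate along the minimizing movements scheme. For $u_0 \in BV(\Omega) \cap L^\infty(\Omega)$ the iterates $u^N(t)$ satisfy $J_{u^N(t)} \subset J_{u_0}$ and $|u^N(t)^+ - u^N(t)^-| \leq |u_0^+ - u_0^-|$ $\mathcal{H}^{n-1}$-a.e., with all these jump sets $\mathcal{H}^{n-1}$-almost covered by one common countable family of $C^1$ graphs. Assuming first $u_0 \in \overline{D(A_\infty)}$, I would invoke the Crandall--Liggett theorem \cite[Theorem A.28]{ACM}, applicable since $A_\infty$ is accretive and satisfies the range condition $R(I+\lambda A_\infty) = L^\infty(\Omega)$, both checkable from the Cahn--Hoffman characterization of $\partial_{L^2}TV$ in Theorem \ref{TSub}; this gives $u^N(t) \to u(t)$ in $L^\infty(\Omega)$ for $t>0$. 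The subtle point is passing \eqref{min_mov_est} to the limit: because $u^N(\cdot)$ has countable image, one selects an $\mathcal{H}^{n-1}$-full subset $J \subset J_{u_0}$ at whose points every $u^N(t)$ is either approximately continuous or has a jump with normal $\nu_{u_0}$; then the half-ball triangle-inequality estimate \eqref{jump_triangle}, together with boundedness of $\{u^N(t)^\pm(x)\}_N$ and $L^\infty$-convergence, yields $J_{u(t)} \subset J_{u_0}$, $S_{u(t)} \subset S_{u_0}$ up to $\mathcal{H}^{n-1}$-null sets and the jump bound at time $t$.

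Finally I would drop the hypothesis $u_0 \in \overline{D(A_\infty)}$, keeping only $u_0 \in BV(\Omega) \cap L^n(\Omega)$. The $L^n$--$L^\infty$ regularizing effect \cite[Theorem 2.16]{ACM} combined with the homogeneity bound $|u_t(t)| \leq 2|u(s)|/(t-s)$ \cite[(2.34)]{ACM} shows $u(t) \in D(A_\infty)$ for every $t>0$, so by the previous step $J_{u(t)} \subset J_{u(s)}$ and $|u^+(t)-u^-(t)| \leq |u^+(s)-u^-(s)|$ $\mathcal{H}^{n-1}$-a.e.\ for $0<s<t$. Taking $s = s_k \to 0^+$, lower semicontinuity of $TV$ and its monotonicity along the flow force $\int_\Omega |Du(s_k)| \to \int_\Omega |Du_0|$, upgrading $u(s_k) \to u_0$ in $L^2(\Omega)$ to strict convergence in $BV(\Omega)$, hence $|Du(s_k)| \rightharpoonup |Du_0|$ weakly-$*$ in $M(\Omega)$; evaluating on balls via \cite[Proposition 1.63]{AFP} and dividing by $|B_r^{n-1}|$ gives $|u^+(t,x)-u^-(t,x)| \leq \liminf_{r \to 0^+} |B_r^{n-1}|^{-1}\int_{B_r(x)}|Du_0|$ $\mathcal{H}^{n-1}$-a.e.\ on $J_{u(t)}$, whence $J_{u(t)} \subset J_{u_0}$ by \cite[Proposition 3.92]{AFP} and the jump-size inequality by Radon--Nikodym differentiation. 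The main obstacles are precisely the two limit passages that carry an $\mathcal{H}^{n-1}$-a.e.\ pointwise inequality across convergences that hold only in $L^\infty$ or $L^2$ — resolved by the countable-image/common-graph device and by the strict-$BV$ upgrade, respectively — along with the verification of accretivity and the range condition for $A_\infty$.
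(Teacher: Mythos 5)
Your proposal follows the paper's own proof step for step: the same mixed-variation argument for the one-step ROF jump estimate, the same Crandall--Liggett/countable-image device to pass the bound through the minimizing-movements limit in $L^\infty$, and the same strict-$BV$ convergence argument to remove the $\overline{D(A_\infty)}$ hypothesis. It is correct and essentially identical to the argument given in Section \ref{S6}.
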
 
The thesis of the theorem can also be rephrased as $|D^j u(t)| \leq |D^j u_0|$ in the sense of measures. As we have mentioned before, analogous inequality for the absolutely continuous part of $Du$ fails in general if $n>1$. To our knowledge it remains an open question whether $|D^c u(t)|\leq |D^c u_0|$. 

As for the fourth-order case, we have seen in Section \ref{S4} that the jump inclusion $J_{u(t)} \subset J_{u_0}$ does not hold, as the jumps can move. Moreover, jump discontinuities can emerge out of Lipschitz continuous initial data, even in the 1D case \cite{GG}. 
}

\end{document}